\definecolor{lightgray}{gray}{0.94}
\numberwithin{equation}{section}
\newtheorem{thm}[equation]{Theorem}
\newtheorem*{mainthm*}{Main Theorem}
\newtheorem{prop}[equation]{Proposition}
\newtheorem{coro}[equation]{Corollary}
\newtheorem{lem}[equation]{Lemma}
\theoremstyle{definition}
\newtheorem{defi}[equation]{Definition}
\newtheorem{rem}[equation]{Remark}
\newtheorem{cons}[equation]{Construction}
\newtheorem{exa}[equation]{Example}
\newcommand{\cC}{\mathcal{C}}
\newcommand{\cD}{\mathcal{D}}
\newcommand{\cW}{\mathcal{W}}
\newcommand{\cF}{\mathcal{F}}
\newcommand{\card}{\mathrm{card}}
\renewcommand{\S}{\mathbb{S}}
\newcommand{\Z}{\mathbb{Z}}
\newcommand{\N}{\mathbb{N}}
\newcommand{\Hom}{\mathrm{Hom}}
\newcommand{\End}{\mathrm{End}}
\newcommand{\colim}{\operatornamewithlimits{colim}}
\newcommand{\Ex}{\mathit{Ex}}
\newcommand{\incl}{\mathrm{incl}}
\newcommand{\op}{\mathit{op}}
\newcommand{\scrA}{\mathscr{A}}
\newcommand{\id}{\mathrm{id}}
\newcommand{\can}{\mathrm{can}}
\newcommand{\ev}{\mathrm{ev}}
\newcommand{\cE}{\mathcal{E}}
\newcommand{\scrK}{\mathscr{K}}
\newcommand{\ob}{{\mathrm{ob}\,}}
\newcommand{\Res}{{\mathrm{Res}}}
\newcommand{\bbE}{\mathbb{E}}
\newcommand{\pr}{\mathrm{pr}}
\newcommand{\C}{\mathbb{C}}
\newcommand{\plus}{{\raisebox{.2\height}{\scalebox{.6}{+}}}}
\newcommand{\unit}{\theta}
\newcommand{\counit}{\rho}
\newcommand{\bK}{\mathbf{K}}
\newcommand{\Ktop}{K^\mathrm{top}}
\newcommand{\kk}{{kk}}
\newcommand{\KK}{{KK}}
\newcommand{\KH}{\mathrm{KH}}
\newcommand{\bKH}{\mathbf{KH}}
\newcommand{\FIN}{{\mathcal{F}in}}
\newcommand{\GF}{(G,\cF)}
\newcommand{\GFin}{(G, \FIN)}
\newcommand{\OrG}{{\mathrm{Or}(G)}}
\newcommand{\OrGF}{{\mathrm{Or}(G,\cF)}}
\newcommand{\OrGFin}{{\mathrm{Or}(G,\FIN)}}
\newcommand{\EFG}{{E_{\mathcal{F}}(G)}}
\newcommand{\EFinG}{E_\FIN(G)}
\newcommand{\sk}{\operatorname{sk}}
\newcommand{\Stab}{\mathrm{Stab}}
\newcommand{\Ind}{\mathrm{Ind}}
\newcommand{\supp}{\mathrm{supp}}
\newcommand{\Algl}{{\mathrm{Alg}_\ell}}
\newcommand{\GAlgl}{{G\mathrm{Alg}_\ell}}
\newcommand{\HAlgl}{{H\mathrm{Alg}_\ell}}
\newcommand{\GModl}{{G\mathrm{Mod}_\ell}}
\newcommand{\Catl}{{\mathrm{Cat}_\ell}}
\newcommand{\GCatl}{{G\mathrm{Cat}_\ell}}
\newcommand{\inj}{\mathrm{inj}}
\newcommand{\proj}{\mathrm{proj}}
\newcommand{\const}{\operatorname{const}}
\newcommand{\B}{B}
\newcommand{\scrC}{\mathscr{C}}
\newcommand{\bQ}{\bar{Q}}
\newcommand{\Idos}{L}
\newcommand{\Iuno}{M}
\newcommand{\ForgetfulFunctor}{u}
\newcommand{\Ustar}[1]{{(\ForgetfulFunctor_{#1})^\star}}
\newcommand{\Uexcl}[1]{{(\ForgetfulFunctor_{#1})_!}}
\newcommand{\Uast}[1]{{(\ForgetfulFunctor_{#1})^*}}
\newcommand{\Uastd}[1]{{(\ForgetfulFunctor_{#1})_*}}
\newcommand{\slice}[2]{{{#1}_{\hspace{-0.1em}/#2}}}
\newcommand{\intg}[1]{\int^\slice{\cO}{#1}}
\newcommand{\cO}{\mathcal{O}}
\newcommand{\J}{J}
\newcommand{\funcoend}{\mathscr{C}} 
\newcommand{\clas}{\mathrm{clas}}
\newcommand{\diag}{\mathrm{diag}}
\newcommand{\bE}{\mathbf{E}}
\newcommand{\bF}{\mathbf{F}}
\newcommand{\bR}{\mathbf{R}}
\newcommand{\cA}{\mathcal{A}}
\newcommand{\cR}{\mathcal{R}}
\newcommand{\bbK}{\mathbb{K}}
\newcommand{\Spt}{\mathrm{Sp}}
\newcommand{\pt}{\mathrm{pt}}
\newcommand{\tri}{\mathscr{T}}
\newcommand{\cte}{\delta}
\newcommand\mymatrixbraceoffseth{0.8em}
\newcommand\mymatrixbraceoffsetv{0.6em}
\newcommand*\mymatrixbraceright[4][m]{
	\draw[mymatrixbrace] ($(#1.north west)!(#1-#3-1.south west)!(#1.south west)-    (\mymatrixbraceoffseth,0)$)
	-- node[left=2pt] {#4} 
	($(#1.north west)!(#1-#2-1.north west)!(#1.south west)-(\mymatrixbraceoffseth,0)$);
}
\newcommand*\mymatrixbracetop[4][m]{
	\draw[mymatrixbrace] ($(#1.north west)!(#1-1-#2.north west)!(#1.north east)+(0,\mymatrixbraceoffsetv)$)
	-- node[above=2pt] {#4} 
	($(#1.north west)!(#1-1-#3.north east)!(#1.north east)+(0,\mymatrixbraceoffsetv)$);
}
\begin{document}

\title{Algebraic $\kk$-theory and the $\KH$-isomorphism conjecture}
\author{E. Ellis}
\email{eellis@fing.edu.uy}
\address{IMERL. Facultad de Ingenier\'\i a. Universidad de la Rep\'ublica. Montevideo, Uruguay.}
\author{E. Rodr\'iguez Cirone}
\email{erodriguezcirone@cbc.uba.ar}
\address{Dep. Matemática -- CBC -- UBA, Buenos Aires, Argentina.}
\thanks{All authors were partially supported by grants ANII FCE-3-2018-1-148588, PICT-2021-I-A-00710 and UBACyT 2023 20020220300206BA. The first author was partially supported by ANII, CSIC and PEDECIBA. The second author was partially supported by grant PIP 2021-2023.}

\subjclass[2020]{19D55, 55N91, 18N40, 19K35.}
\keywords{Bivariant algebraic $K$-theory, Equivariant homology, Homotopy $K$-theory, Quillen model categories.}

\maketitle

\begin{abstract}
    We relate the Davis-L\"uck homology with coefficients in Weibel's homotopy $K$-theory to the equivariant algebraic $\kk$-theory using homotopy theory and adjointness theorems.
    We express the left hand side of the assembly map for the $\KH$-isomorphism conjecture in terms of equivariant algebraic $\kk$-groups.
\end{abstract}

\section{Introduction}
Kasparov $\KK$-theory, introduced in \cite{kasp}, is the major tool in noncommutative topology. To every pair $(A,B)$ of separable $C^*$-algebras it associates an abelian group $\KK(A,B)$ that is a common generalization both of topological $K$-theory $\Ktop_{*}(B)\cong\KK_{*}(\mathbb{C},B)$ and topological $K$-homology.
Kasparov $\KK$-theory was deeply studied by Cuntz, who gave an alternative description of the groups $\KK(A,B)$ and provided a new perspective on the theory \cite{newlook}. The groups $\KK(A,B)$ form the hom-sets of a category $\KK$ that is the target of the universal homotopy invariant, $C^{*}$-stable and split-exact functor from separable $C^*$-algebras into an additive category; these universal properties were established by Higson \cite{higson}. Kasparov developed in \cite{kaspG} an equivariant version of $\KK$-theory, $\KK^G$, for separable $C^*$-algebras with a an action of a group $G$ by $\ast$-automorphisms.
Equivariant $\KK$-theory was used in \cite{bacohi} to formulate the Baum-Connes conjecture with coefficients and is one of the basic tools in the proofs of results about it. Cuntz also analyzed $\KK$-theory in algebraic terms and defined a bivariant $K$-theory for all locally convex algebras \cite{cuntz}.

Motivated by the works of Cuntz and Higson, algebraic $\kk$-theory was introduced by Corti\~nas and Thom in \cite{cortho} as a completely algebraic counterpart of Kasparov $\KK$-theory.
To every pair $(A,B)$ of algebras over a commutative ring $\ell$ it associates an abelian group $\kk(A,B)$ that generalizes Weibel's homotopy $K$-theory $\KH$ defined in \cite{chuck}. By \cite{cortho}*{Main Theorem} we have $\KH_*(B)\cong \kk_*(\ell,B)$ for every algebra $B$. The groups $\kk(A,B)$ form the hom-sets of a category $\kk$ that is the target of the universal (polynomial) homotopy invariant, $M_\infty$-stable and excisive functor from $\Algl$ into a triangulated category. An equivariant version of algebraic $\kk$-theory, $\kk^G$, was developed in \cite{euge} for $\ell$-algebras with an action of a group $G$. By Garkusha's representability theorems \cite{garku}, the groups $\kk(A,B)$ can be recovered as the homotopy groups of certain spectra $\bbK(A,B)$. These spectra were constructed in \cite{garku} and were later extended in \cite{tesisema} to spectra $\bbK^G(A,B)$ representing $\kk^G(A,B)$.

We may summarize the above into a dictionary between Kasparov $\KK$-theory and algebraic $\kk$-theory:
\begin{center}
    \begin{tabular}{ c c c }
        \hline
        \rowcolor{lightgray}
        continuous homotopy invariance           & $\longleftrightarrow$ & polynomial homotopy invariance  \\
        $C^*$-stability                          & $\longleftrightarrow$ & $M_\infty$-stability            \\
        \rowcolor{lightgray}
        split exactness                          & $\longleftrightarrow$ & excision                        \\
        $\KK$-theory of $C^*$-algebras           & $\longleftrightarrow$ & $\kk$-theory of $\ell$-algebras \\
        \rowcolor{lightgray}
        $\KK(\mathbb{C},B)\cong \Ktop_*(B)$      & $\longleftrightarrow$ & $\kk_*(\ell,B)\cong \KH_*(B)$   \\
        $\KK^G$-theory of $G$-$C^*$-algebras     & $\longleftrightarrow$ & $\kk^G$-theory of $G$-algebras  \\
        \rowcolor{lightgray}
        $\begin{array}{c}
                 \text{formulation of the}          \\
                 \text{Baum-Connes conjecture with} \\
                 \text{coefficients using $\KK^G$-theory}
             \end{array}$ & $\longleftrightarrow$ & \textbf{?}                                                 \\
        \hline
    \end{tabular}
\end{center}
\medskip
This paper is concerned with the last line of this dictionary. The Baum-Connes conjecture with coefficients was originally formulated using $\KK^G$-theory, but no analogue is known on the algebraic side. We shed light on this by exploring the relation between $\kk^G$-theory and the $\KH$-isomorphism conjecture---the latter was introduced in \cite{balu} using the homotopical approach to isomorphism conjectures developed in \cite{dalu}. Our main theorem states that the domain of the $\KH$-assembly map can be described in terms of $\kk^G$-groups in way that is completely analogous to the case of the Baum-Connes assembly map. As explained in \ref{sec:KHiso}, a group $G$ and a $G$-algebra $B$ \emph{satisfy the $\KH$-isomorphism conjecture} if certain morphism
\[H^G_*(\EFinG;\bKH_B)\to \KH_*(B\rtimes G),\]
called the \emph{assembly map}, is an isomorphism. Here $\EFinG$ is the classifying space of $G$ with respect to its family of finite subgroups and $H^G_*(-;\bKH_B)$ is a $G$-homology theory such that $H^G_*(G/H;\bKH_B)\cong \KH_*(B\rtimes H)$ for every sugbroup $H\subseteq G$. We prove the following result; see \ref{sec:polyfunc} for the precise definition of $\ell^{(X)}$ and \ref{sec:isoconj} for the definition of $\GFin$-complex:

\begin{mainthm*}[cf. Theorem \ref{thm:mainthm} and Remark \ref{rem:mainthm}]
    Let $G$ be a group such that $|H|$ is invertible in the base ring $\ell$ for every finite subgroup $H\subseteq G$. Let $B$ be a $G$-algebra. Then for every $\GFin$-complex $Z$ there is a natural isomorphism
    \begin{equation}\label{eq:mainthmintro}H^G_*(Z;\bKH_B)\cong \colim_{\substack{X\subseteq Z \\ \text{$G$-finite}}}\kk^G_*(\ell^{(X)}, B)\end{equation}
    where $\ell^{(X)}$ denotes the algebra of finitely supported polynomial functions on $X$.
\end{mainthm*}

For $Z=G/H$ with $H\subseteq G$ a finite subgroup, the isomorphism \eqref{eq:mainthmintro} is easily understood using the adjointness theorems in $\kk$-theory. Let $H\subseteq G$ be a finite subgroup whose order is invertible in $\ell$. By the Green-Julg Theorem \cite{euge}*{Theorem 5.2.1} we have a natural isomorphism
\begin{equation}\label{gjmor}\kk^H(A^\tau, B) \cong \kk(A, B\rtimes H)\end{equation}
for $A\in\Algl$ and $B\in \HAlgl$. Here $A^\tau$ denotes the algebra $A$ considered as an $H$-algebra with trivial $H$-action. By the adjunction between induction and restriction \cite{euge}*{Theorem 6.14} we have a natural isomorphism
\begin{equation}\label{irmor}\kk^G(\Ind_H^G(A), B)\cong \kk^H(A, \Res_G^H(B))\end{equation}
for $A\in \HAlgl$ and $B\in\GAlgl$. Since $\Ind_H^G(A^\tau)=\bigoplus_{G/H}A=A^{(G/H)}$, upon composing \eqref{gjmor} and \eqref{irmor} we get a natural isomorphism
\begin{equation}\label{eq:compositeAdj}
    \kk^G(A^{(G/H)}, B)\cong \kk(A, B\rtimes H)
\end{equation}
for $A\in\Algl$ and $B\in\GAlgl$. As it will turn out, the identification \eqref{eq:mainthmintro} is obtained by glueing the isomorphisms \eqref{eq:compositeAdj} with homotopy theoretic techniques. In this process we will replace the groups $\kk$ and $\kk^G$ by the the spectra $\bbK$ and $\bbK^G$ that respectively represent them; see \cite{garku} and Appendix \ref{sec:mcs}.

The paper is organized as follows. In Section \ref{sec:prelim} we recall definitions and preliminaries that are used throughout the article.
In Section \ref{sec:adtr} we give explicit descriptions of the unit and counit of the adjunction \eqref{eq:compositeAdj}.
In Section \ref{sec:crossedpro} we define a triangulated functor $\cR(-\rtimes G/H):\kk^G\to\kk$ that is naturally isomorphic to the crossed product with $H$. This allows us to replace the right-hand side of \eqref{eq:compositeAdj} by an actual functor on $\OrG$; see Defintion \ref{defi:OrG}. In Section \ref{sec:natadj}, we prove that the isomorphism
\begin{equation}\label{eq:compositeAdjNat}
    \kk^G(A^{(G/H)}, B)\cong \kk(A, \cR(B\rtimes G/H))
\end{equation}
is natural in $G/H$; see Theorem \ref{thm:adj}. Section \ref{sec:bomba} is the technical core of this work and is devoted to lifting the isomorphism \eqref{eq:compositeAdjNat} to a weak equivalence of $\OrGFin$-spectra. By Lemma \ref{lem:natcounit}, we can describe the isomorphism \eqref{eq:compositeAdjNat} as the composite of the morphisms in the zig-zag \eqref{eq:decompAdj}. Upon replacing $\kk$ by $\bbK$ and $\kk^G$ by $\bbK^G$ we obtain a zig-zag:
\begin{equation}\label{eq:zzagpintro}\begin{gathered}\xymatrix@C=3.5em{
        \bbK(A, \cR(B\rtimes G/H))\ar[r]^-{(-)^{(G/H)}} & \bbK^G(A^{(G/H)}, \left[\cR(B\rtimes G/H)\right]^{(G/H)})\\
        \bbK^G(A^{(G/H)}, M_{G}B) & \bbK^G(A^{(G/H)}, \cR\left[(B\rtimes G/H)^{(G/H)}\right])\ar[u]_-{\rotatebox{90}{$\scriptstyle\sim$}}\ar[l]_-{\cR(\zeta_{G/H})}}\end{gathered}\end{equation}
Here the technical difficulties arise:
\begin{enumerate}
    \item\label{item:intro1} How to consider the spectra on the right column as covariant functors $\OrGFin\to\Spt$ (or to replace them by ones)?
    \item\label{item:intro2} Once the previous question has been addressed, are the morphisms in \eqref{eq:zzagpintro} natural in $G/H$?
\end{enumerate}
An answer to \eqref{item:intro1} is provided in sections \ref{sec:coend} and \ref{sec:coend2}.
In Section \ref{sec:modcatbi} we introduce a model category that allows us to build models for certain homotopy coends. An answer to \eqref{item:intro2} is given in Theorem \ref{thm:ZZAG}. The latter is the key technical result of this paper. In Section \ref{sec:prin} we finally prove our Main Theorem (Theorem \ref{thm:mainthm}). In Section \ref{sec:kktheoreticASS} we prove a first result towards obtaining a $\kk$-theoretic description of the $\KH$-assembly map.

\subsection*{Acknowledgements}
Both authors wish to thank Willie Corti\~nas for introducing them to the problem that led to the main result of this work as well as for many fruitful discussions on the topic. They would also like to thank the referee for carefully reading the paper and suggesting improvements.

\section{Notation, conventions and preliminaries}\label{sec:prelim}

Throughout this text, $\ell$ denotes a commutative ring with unit and $G$ denotes a group.
We write $\Algl$ for the category of not necessarily unital $\ell$-algebras and algebra homomorphisms. The objects of $\Algl$ are simply called \emph{algebras}. Tensor products over $\ell$ are denoted by $\otimes$. A \emph{$G$-algebra} is an algebra with a left action of $G$. We write $\GAlgl$ for the category of $G$-algebras and $G$-equivariant algebra homomorphisms. The category of simplicial sets is denoted by $\S$. A \emph{$G$-simplicial set} is a simplicial set endowed with a left action of $G$. We write $\S^G$ for the category of $G$-simplicial sets with equivariant morphisms.

\subsection{Algebras of polynomial functions on a simplicial set}\label{sec:polyfunc}

Let $B$ be an algebra. For $n\geq 0$, the algebra $B^{\Delta^n}$ of \emph{$B$-valued polynomial functions on the standard $n$-simplex} is defined as $B^{\Delta^n}:=B[t_0,\dots, t_n]/\langle t_0+\cdots + t_n-1\rangle$. For a simplicial set $X$, the algebra $B^X$ of \emph{$B$-valued polynomial functions on $X$} is defined as
\[B^X:=\Hom_\S(X,B^\Delta)\]
where $B^\Delta$ is the simplicial algebra $[n]\mapsto B^{\Delta^n}$. Note that the functor $B^{-}:\S\to\Algl$ sends colimits to limits. We summarize useful properties of this construction in the following lemma.

\begin{lem}[\cite{cortho}*{Lemma 3.1.2 and Proposition 3.1.3}]\label{lem:cortholX}
    Let $B$ be an algebra.
    \begin{enumerate}
        \item If $X\subseteq Y$ is an inclusion of simplicial sets, then $B^Y\to B^X$ is surjective.
        \item If $K$ is a finite simplicial set, then $\ell^K$ is free as $\ell$-module and there is a natural isomorphism of algebras $B^K\cong B\otimes \ell^K$.
    \end{enumerate}
\end{lem}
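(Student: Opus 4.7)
Plan: Both parts rest on the fact already noted in the excerpt that $B^{-}\colon \S\to \Algl$ sends colimits to limits, combined with a direct analysis on standard simplices. Both proofs proceed by a skeletal induction.

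For part (1), I would factor the inclusion $X\subseteq Y$ as a (possibly transfinite) composite of pushouts along boundary inclusions $\partial\Delta^n\hookrightarrow \Delta^n$, indexed by the nondegenerate simplices of $Y$ not in $X$, ordered by dimension. Each such pushout is sent by $B^{-}$ to a pullback; hence $B^Y\to B^X$ is (a cofiltered limit of) base changes of the restriction maps $B^{\Delta^n}\to B^{\partial\Delta^n}$. Surjectivity is preserved by base change and by such limits of surjections, so everything reduces to showing that $B^{\Delta^n}\to B^{\partial\Delta^n}$ is surjective for every $n$. Identifying $B^{\Delta^n}\cong B[t_1,\dots,t_n]$ via the relation $t_0=1-t_1-\cdots-t_n$, I would argue by a face-by-face extension: lift the restriction of the given datum to one face to an element of $B^{\Delta^n}$ (treating it as independent of one barycentric coordinate), and then successively correct the difference by terms divisible by the appropriate $t_i$ so as to match the remaining faces, using the compatibility of the datum on codimension-two faces.

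For part (2), I would induct on the number of nondegenerate simplices of $K$. For the base case $K=\Delta^n$ the identifications $\ell^{\Delta^n}\cong \ell[t_1,\dots,t_n]$ and $B^{\Delta^n}\cong B[t_1,\dots,t_n]$ immediately give freeness over $\ell$ (with basis the monomials) and the natural isomorphism $B^{\Delta^n}\cong B\otimes_\ell \ell^{\Delta^n}$. For the inductive step, write $K$ as a pushout $K'\cup_{\partial\Delta^n}\Delta^n$ where $K'$ has one fewer nondegenerate simplex. Applying $B^{-}$ produces a pullback square, and by (1) the vertical map $\ell^{\Delta^n}\to \ell^{\partial\Delta^n}$ is surjective. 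By induction $\ell^{K'}$, $\ell^{\Delta^n}$ and $\ell^{\partial\Delta^n}$ are free $\ell$-modules, so $\ell^K$ sits in a short exact sequence of $\ell$-modules whose right term is free, hence splits, and $\ell^K$ is free. Since $\ell^{\partial\Delta^n}$ is free (hence flat), the functor $B\otimes_\ell-$ preserves the pullback defining $\ell^K$; comparing with the pullback computing $B^K$ and using the inductive isomorphisms yields $B^K\cong B\otimes_\ell \ell^K$, naturally in $B$ and $K$.

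The main technical point is the base case of (1), namely the surjectivity of $B^{\Delta^n}\to B^{\partial\Delta^n}$: everything else in the argument is a formal consequence of the colimit-to-limit property of $B^{-}$, standard homological algebra, and flatness. The face-by-face extension must be organized with some care so that each correction term is both divisible by the right barycentric coordinate and consistent with the next face of the boundary, but once this inductive scheme is in place it is a routine algebraic manipulation.
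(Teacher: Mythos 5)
The paper does not give a proof of this lemma; it cites \cite{cortho}*{Lemma 3.1.2 and Proposition 3.1.3}, so there is no in-paper argument to compare against. That said, your reduction in (1) is sound: the skeletal filtration reduces everything to the surjectivity of $B^{\Delta^n}\to B^{\partial\Delta^n}$, and the face-by-face correction does work once one observes that, on the $(k{+}1)$-st face, the defect of the partial lift vanishes on the codimension-two intersections with faces $0,\dots,k$ and is therefore divisible by $t_0\cdots t_k$ (a polynomial in commuting central variables that vanishes when a variable is set to $0$ is divisible by that variable, and divisibility by each of several distinct variables implies divisibility by their product). So (1) is fine.

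The gap is in the freeness claim of (2). From the Mayer--Vietoris short exact sequence $0\to\ell^K\to\ell^{K'}\oplus\ell^{\Delta^n}\to\ell^{\partial\Delta^n}\to 0$, the splitting you invoke gives $\ell^K\oplus\ell^{\partial\Delta^n}\cong\ell^{K'}\oplus\ell^{\Delta^n}$; this shows $\ell^K$ is \emph{projective}, not free. Projective summands of free modules with free complement need not be free (the tangent module of $S^2$ over $\mathbb{R}[x,y,z]/(x^2{+}y^2{+}z^2{-}1)$ is stably free and not free), and the standard positive results (Gabel) do not apply here since the complementary summand $\ell^{\partial\Delta^n}$ is itself of infinite rank. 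To actually get freeness you should instead use the \emph{other} short exact sequence, $0\to\ker(\ell^K\to\ell^{K'})\to\ell^K\to\ell^{K'}\to 0$, and observe that this kernel is free: it consists of functions on $K$ vanishing on $K'$, which are exactly the extensions by zero of functions on $\Delta^n$ vanishing on $\partial\Delta^n$, i.e.\ $\ker(\ell^{\Delta^n}\to\ell^{\partial\Delta^n})=(t_0\cdots t_n)\subseteq \ell^{\Delta^n}$; since $t_0\cdots t_n$ is a nonzerodivisor, this ideal is free of rank one over $\ell^{\Delta^n}\cong\ell[t_1,\dots,t_n]$, hence free over $\ell$. Then the sequence is an extension of a free module by a free module, which splits and yields $\ell^K$ free. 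A smaller issue: your induction "on the number of nondegenerate simplices" with base case $K=\Delta^n$ is incoherent, since $\partial\Delta^n$ has $2^{n+1}-2$ nondegenerate simplices, typically more than $K'$; organize the induction by dimension (assume the result for all finite simplicial sets of dimension $<n$, prove $\ell^{\Delta^n}$ directly, then attach $n$-cells one at a time). With these two repairs the flatness step you use for the tensor isomorphism $B^K\cong B\otimes\ell^K$ goes through as you describe.
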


Let $X$ be a simplicial set. We recall the definition of the algebra $B^{(X)}$ of \emph{finitely supported $B$-valued polynomial functions on $X$}; see \cite{corel}*{Section 9.3} for details. The \emph{support} of a polynomial function $\phi\in B^X$ is defined as the simplicial subset of $X$ generated by
$\cup_{n\geq 0}\{\sigma\in X_n:\phi(\sigma)\ne 0\}$. Put:
\[B^{(X)}=\{\phi\in B^X:\supp(\phi) \text{ is finite}\}\]
Then $B^{(X)}\subseteq B^X$ is a two-sided ideal. If $K$ is a finite simplicial set, then we have $B^{(K)}=B^K$.

\begin{rem}
    The assignment $X\mapsto B^{(X)}$ is natural only for \emph{proper} maps of simplicial sets---i.e. morphisms $f:X\to Y$ such that $f^{-1}(K)$ is finite for every finite simplicial subset $K\subseteq Y$.
\end{rem}

\begin{rem}\label{rem:polynomialfcoprod}
    If $\{X_i\}_i$ is a family of simplicial sets then we have:
    \[B^{(\bigsqcup_iX_i)}\cong \bigoplus_iB^{(X_i)}\]
    The behaviour of $B^{(-)}$ with respect to general colimits of proper maps is analyzed in \cite{corel}*{Section 9.3, conditions $\partial_0$) and $\partial_1$)}.
    In particular, upon applying $B^{(-)}$ to a pushout square of simplicial sets where all the morphisms are proper we obtain a pullback square of algebras.
\end{rem}

If $X$ is a $G$-simplicial set, then $B^X$ and $B^{(X)}$ are $G$-algebras with the action defined by $(g\cdot \phi)(\sigma):=\phi(g^{-1}\cdot \sigma)$ for $\phi\in B^X$, $n\geq 0$ and $\sigma\in X_n$. If $X\to Y$ is a proper morphism in $\S^G$, then $B^{(Y)}\to B^{(X)}$ is a morphism in $\GAlgl$.

\begin{exa}\label{exa:GH}
    Let $H\subseteq G$ be a subgroup and let $G/H=\{uH: u\in G\}$ be the set of left cosets of $H$. We can consider the $G$-set $G/H$ as a $0$-dimensional $G$-simplicial set. For any algebra $A$ we have:
    \begin{align*}
        A^{G/H}   & =A^{\bigsqcup_{uH\in G/H}\{uH\}}\cong\prod_{uH\in G/H}A^{\{uH\}}\cong \prod_{G/H}A                         \\
        A^{(G/H)} & =A^{\left(\bigsqcup_{uH\in G/H}\{uH\}\right)}\cong \bigoplus_{uH\in G/H}A^{(\{uH\})}\cong \bigoplus_{G/H}A
    \end{align*}
    In particular, $\ell^{(G/H)}\cong \bigoplus_{G/H}\ell$ is a free $\ell$-module with base $\{\chi_{uH}: uH\in G/H\}$. When considering $\chi_{uH}$ as a function $G/H\to\ell$ with finite support, then $\chi_{uH}(vH)=\delta_{uH, vH}$ (Kronecker delta). Note that we have $g\cdot \chi_{uH}=\chi_{guH}$ for every $g\in G$. For any algebra $A$ we have:
    \[A^{(G/H)}\cong \bigoplus_{G/H}A\cong A\otimes \bigoplus_{G/H}\ell\cong A\otimes \ell^{(G/H)}\]
    For $a \in A$ and $uH\in G/H$ we sometimes write $a\chi_{uH}\in A^{(G/H)}$ for the element corresponding to $a\otimes \chi_{uH}\in A\otimes \ell^{(G/H)}$ under the isomorphism above.
\end{exa}

\subsection{Isomorphism conjectures}\label{sec:isoconj}
In this section we recall the homotopical aproach to the isomorphism conjectures developed in \cite{dalu}. By \cite{corel}*{Section 2} it is equivalent to work in the topological or in the simplicial setting; we choose to do the latter.

\begin{defi}\label{defi:OrG}
    Let $G$ be a group. The \emph{orbit category} of $G$ is the category $\OrG$ whose objects are the $G$-sets $G/H$ with $H\subseteq G$ a subgroup and whose morphisms are the $G$-equivariant functions.
\end{defi}

\subsubsection{Model structure on \texorpdfstring{$G$}{G}-simplicial sets determined by a family of subgroups}
For $X\in\S^G$ and $H\subseteq G$ a subgroup, let $X^H$ be the simplicial subset of $X$ formed by those simplices that are fixed by $H$. Note that
\[X^H\cong \Hom_G(G/H,X)\]
so that, for fixed $X$, $G/H\mapsto X^H$ is a functor on $\OrG$. Let $\cF$ be a nonempty family of subgroups of $G$ closed under conjugation and under taking subgroups.  As explained in \cite{corel}*{Section 2}, $\S^G$ admits a model structure where a mophism $f:X\to Y$ is a weak equivalence (resp. a fibration) if and only if $f^H:X^H\to Y^H$ is a weak equivalence (resp. a fibration) in $\S$ for every $H\in\cF$. Moreover, the cofibrant objects, called \emph{$\GF$-complexes}, are those $G$-simplicial sets $X$ such that the stabilizer of every simplex of $X$ is in $\cF$. These can be alternatively described as those $G$-simplicial sets $X$ that can be built from cells of the form $G/H\times\Delta^n$ with $H\in\cF$; see Appendix \ref{sec:Gsset}. We write $\EFG\to\pt$ for a cofibrant replacement of the point and call $\EFG$ a \emph{model for the classifying space of $G$ with respect to $\cF$}.

\subsubsection{Equivariant homology and assembly map}
Let $\Spt$ be the category of spectra; see Appendix \ref{sec:mcs} for details. An \emph{$\OrG$-spectrum} is a functor $\OrG\to\Spt$. Fix an $\OrG$-spectrum $\bE$ and define $H^G(-;\bE):\S^G\to\Spt$ as the coend:
\begin{equation}\label{eq:Ghomology}H^G(X; \bE):= \int^{G/H}X^H_+\wedge \bE(G/H)\end{equation}
Then the groups $H^G_*(X;\bE):=\pi_*H^G(X;\bE)$ assemble into a homology theory of $G$-simplicial sets such that
\[H^G_*(G/H; \bE)\cong \pi_*\bE(G/H)\]
for any $G/H\in\OrG$; see \cite{dalu}*{Lemma 6.1}.
The \emph{assembly map for the triple $(\bE, G, \cF)$} is the morphism
\begin{equation}\label{eq:ass}H^G_*(\EFG;\bE)\to H^G_*(\pt;\bE)\cong \pi_*\bE(G/G)\end{equation}
induced on homology by the cofibrant replacement $\EFG\to\pt$. We say that \emph{the isomorphism conjecture for the triple $(\bE, G, \cF)$ holds} if \eqref{eq:ass} is an isomorphism.

\begin{rem}
    Let $\OrGF$ be the full subcategory of $\OrG$ whose objects are those $G/H$ with $H\in\cF$. If the functor $\bE$ is only defined on $\OrGF$, the formula \eqref{eq:Ghomology} still makes sense but defines a homology theory in the full subcategory of $\S^G$ whose objects are the $\GF$-complexes.
\end{rem}

\begin{rem}\label{rem:natHG}
    Any morphism $f:\bE\to\bF$ of $\OrGF$-spectra induces a natural transformation $f_*:H^G(-;\bE)\to H^G(-;\bF)$ of functors from the category of $\GF$-complexes into $\Spt$.
    Upon taking homotopy groups, we get a natural transformation:
    \begin{equation}\label{eq:natHG}f_*:H^G_*(X; \bE)\to H^G_*(X; \bF)\end{equation}
    Moreover, if $f$ is an objectwise weak equivalence, then \eqref{eq:natHG} is an isomorphism by \cite{dalu}*{Lemma 4.6}.
\end{rem}

\subsubsection{The Baum-Connes conjecture with coefficients}
Let $G$ be a countable group and $B$ be a separable $G$-$C^*$-algebra. The Baum-Connes conjecture fits into the framework described above by considering the family $\FIN$ of finite subgroups and an $\OrG$-spectrum $\bK_B$ such that
\[\pi_*\bK_B(G/H)\cong \Ktop_*(B\rtimes_r H)\]
for every subgroup $H\subseteq G$. Here $\rtimes_r$ denotes the reduced crossed product. The equivalence of the corresponding assembly map with the original one in \cite{bacohi} was recently proved independently in \cite{kranz} and in \cite{bel}.

\subsubsection{The $\KH$-isomorphism conjecture}\label{sec:KHiso}
Let $G$ be a group and let $B$ be a $G$-algebra. The $\KH$-isomorphism conjecture was introduced in \cite{balu}*{Section 7} using the machinery described above; see \cite{bb}*{Section 15.3} for the status of this conjecture. The conjecture is obtained upon considering the family $\FIN$ of finite subgroups of $G$ and an $\OrG$-spectrum $\bKH_B$ such that
\begin{equation}\label{eq:KHBproperty}\pi_*\bKH_B(G/H)\cong \KH_*(B\rtimes H)\end{equation}
for every subgroup $H\subseteq G$. The original construction of $\bKH_B$ was done in \cite{balu}. In Section \ref{sec:prin} we provide a different construction using the spectra that represent algebraic $\kk$-theory \cite{garku}.

\subsection{Algebraic \texorpdfstring{$\kk$}{kk}-theory}
Let $\cC$ denote either the category $\Algl$ or $\GAlgl$.

\subsubsection{Homotopy invariance}
Two morphisms $f,g:A\to B$ in $\cC$ are \emph{elementary homotopic} if there exists a morphism $H:A\to B[t]$ such that $\ev_0\circ H =f$ and $\ev_1\circ H=g$. Here, $B[t]$ is the algebra of polynomials with coefficients in $B$ and $\ev_i:B[t]\to B$ is the evaluation at $i$. If $B$ is a $G$-algebra we let $G$ act trivially on $t$. The relation of elementary homotopy is easily seen to be reflexive and symmetric but not transitive. We consider the equivalence relation generated by elementary homotopy and call it \emph{homotopy}.

Let $\scrC$ be a category. A functor $F:\cC\to\scrC$ is \emph{homotopy invariant} if it sends homotopic morphisms to the same morphism.
It is easily verified that $F$ is homotopy invariant if and only if $F(A\subseteq A[t])$ is an isomorphism for every $A$.

\subsubsection{Matrix stability}
Let $A$ be an algebra and $X$ be an infinite set. We write $M_XA$ for the algebra of finitely supported matrices with coefficients in $A$ that are indexed over $X\times X$; when $A=\ell$ we just write $M_X$. Note that $M_XA\cong M_X\otimes A$. The assignment $X\mapsto M_X$ depends covariantly on $X$ with respect to injective functions.

Fix $x\in X$ and let $\iota_x:A\to M_X A$ be given by $\iota_x(a)=e_{x,x}\otimes a$. Let $\scrC$ be a category. A functor $F:\cC\to\scrC$ is \emph{$M_X$-stable} if $F(\iota_x:A\to M_X A)$ is an isomorphism for every $A$. This definition is independent of the choice of $x$ by the following result:

\begin{lem}[\cite{friendly}*{Proposition 2.2.6}]\label{lem:conju}
    Let $\scrC$ be a category and let $F:\GAlgl\to\scrC$ be an $M_2$-stable functor. Let $B\subseteq C$ be an inclusion of $G$-algebras. Suppose that $C$ is unital and let $V\in C$ be an invertible element such that $VB,BV^{-1}\subseteq B$ and $g\cdot V=V$ for all $g\in G$. Then the formula $\phi^{V}(b)=VbV^{-1}$ defines a $G$-algebra homomorphism $\phi^V:B\to B$ such that $F(\phi^V)=\id_{F(B)}$.
\end{lem}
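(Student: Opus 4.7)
The plan is to first verify that $\phi^V$ is well-defined, and then to exploit $M_2$-stability by lifting $\phi^V$ to an inner automorphism of $M_2 B$ that fixes one of the corner inclusions.

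Well-definedness is routine: for $b\in B$, the assumption $VB\subseteq B$ gives $Vb\in B$, and then $BV^{-1}\subseteq B$ gives $\phi^V(b)=(Vb)V^{-1}\in B$. Multiplicativity is immediate from $V^{-1}V=1$, and $G$-equivariance follows from $g\cdot V=V$ (which also forces $g\cdot V^{-1}=V^{-1}$).

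For the main statement, I would introduce the lift $\tilde V:=\begin{pmatrix} V & 0 \\ 0 & 1 \end{pmatrix}\in M_2C$, which is invertible with inverse $\begin{pmatrix} V^{-1} & 0 \\ 0 & 1 \end{pmatrix}$ and fixed by $G$. One checks that conjugation by $\tilde V$ preserves $M_2B$: for $X=(x_{ij})\in M_2B$, the entries of $\tilde V X \tilde V^{-1}$ are $Vx_{11}V^{-1}$, $Vx_{12}$, $x_{21}V^{-1}$, and $x_{22}$, all of which lie in $B$ by the hypotheses (and each of the four hypotheses on $V$ is used here). This yields a $G$-algebra endomorphism $\Phi:M_2B\to M_2B$, and a direct computation gives the two key identities
\[ \Phi\circ \iota_1 = \iota_1\circ \phi^V, \qquad \Phi\circ \iota_2 = \iota_2, \]
where $\iota_1,\iota_2:B\to M_2B$ are the upper-left and lower-right corner inclusions.

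Applying $F$ to the second identity gives $F(\Phi)\circ F(\iota_2)=F(\iota_2)$, and since $F(\iota_2)$ is an isomorphism by $M_2$-stability, we conclude $F(\Phi)=\id_{F(M_2B)}$. Applying $F$ to the first identity then yields $F(\iota_1)\circ F(\phi^V)=F(\Phi)\circ F(\iota_1)=F(\iota_1)$, and invertibility of $F(\iota_1)$ (also by $M_2$-stability) forces $F(\phi^V)=\id_{F(B)}$. The only genuinely subtle point is checking that $\tilde V$ conjugates $M_2B$ into itself; everything else is a clean two-line cancellation against the corner inclusions.
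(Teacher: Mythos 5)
Your proof is correct and spells out the standard matrix-doubling argument that the paper delegates to \cite{friendly}*{Proposition 2.2.6}, together with the required check that $\phi^V$ (and $\Phi$) are $G$-equivariant, which is the only genuinely new ingredient the paper points out. One small clarification worth making: $F(\iota_2)$ is automatically an isomorphism regardless of which corner is used to phrase $M_2$-stability, because $\iota_2=\Sigma\circ\iota_1$ where $\Sigma$ is the swap automorphism of $M_2B$ (conjugation by $e_{12}+e_{21}$, which is $G$-fixed), and any functor sends isomorphisms to isomorphisms, so there is no circularity with the independence-of-corner remark preceding the lemma.
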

\begin{proof}
    It is easily verified that $g\cdot V^{-1}=V^{-1}$ for all $g\in G$ and that $\phi^V$ defines a $G$-algebra homomorphism. The rest of the proof of \cite{friendly}*{Proposition 2.2.6} carries over verbatim.
\end{proof}

Let $S$ be a $G$-set and let $|S|$ be its underlying set. We write $M_S$ for the algebra $M_{|S|}$ endowed with the $G$-action defined by $g\cdot e_{s,t}=e_{g\cdot s, g\cdot t}$.
Let $G_+=G\bigsqcup\{\plus\}$ and let $\iota:\ell\to M_{G_+}$ (respectively $\iota':M_G\to M_{G_+}$) be the morphism induced by the inclusion $\{\plus\}\subset G_+$ (resp. $G\subset G_+$). Let $\scrC$ be a category. There is a notion of \emph{$G$-stability} for functors $\GAlgl\to\scrC$; see \cite{euge}*{Section 3.1} for details. It turns out that any $G$-stable functor sends the morphisms
\begin{equation}\label{eq:zigzag}
    \xymatrix{B\ar[r]^-{\iota} & M_{G_+}\otimes B & M_G\otimes B\ar[l]_-{\iota'}}
\end{equation}
to isomorphisms, for every $B\in\GAlgl$.

\begin{rem}
    For any $G$-algebra $B$, we have an isomorphism $R_{S,B}:(M_S\otimes B)\rtimes G\to M_{|S|}\otimes (B\rtimes G)$ defined by:
    \begin{equation}\label{eq:R}
        R_{S,B}((e_{s,t}\otimes b)\rtimes g)=e_{s,g^{-1}t}\otimes (b\rtimes g)\end{equation}
    This isomorphism is natural in $S$ with respect to injective morphisms of $G$-sets and natural in $B$ with respect to morphisms of $G$-algebras.
\end{rem}

\subsubsection{Excision}\label{subsec:excision}
An \emph{extension} of algebras (respectively of $G$-algebras) is a short exact sequence
\begin{equation}\label{eq:extension}
    \xymatrix{\cE: A \ar[r] & B \ar[r] & C \ar@{-->}@/_1pc/[l]
    }
\end{equation}
that splits in the category of $\ell$-modules (resp. $G$-modules).
Let $(\tri, \Omega)$ be a triangulated category.
A functor $F:\cC\to\tri$ is \emph{excisive} \cite{cortho}*{Section 6.6} if it associates to every extension \eqref{eq:extension} a morphism $\partial_\cE:\Omega F(C)\to F(A)$ that fits into a triangle:
\[
    \xymatrix{\Omega F(C)\ar[r]^-{\partial_\cE} & F(A) \ar[r] & F(B) \ar[r] & F(C)}
\]
These triangles are required to be natural with respect to morphisms of extensions.

\subsubsection{Bivariant \texorpdfstring{$K$}{K}-theory categories}\label{subsec:bivariant} Let $X$ be an infinite set. There exists a triangulated category $\kk$ (see \cite{cortho} for $X=\N$ and \cite{tesisema} for general $X$) endowed with a functor $j:\Algl\to\kk$ that is homotopy invariant, $M_X$-stable and excisive. This functor $j$ is moreover universal in the following sense: any functor $\Algl\to (\tri,\Omega)$ that is homotopy invariant, $M_X$-stable and excisive factors uniquely through $j$ \cite{cortho}*{Theorem 6.6.2}. By a theorem of Corti\~nas and Thom \cite{cortho}*{Main Theorem}, homotopy $K$-theory is representable in $\kk$ since we have
\[\kk_*(\ell, B)\cong \KH_*(B)\]
for every $B\in \Algl$. Here $\kk_n(A,B)$ is defined as $\kk(j(A), \Omega^nj(B))$. This representability theorem is a major computation of \cite{cortho}.

Let $G$ be a group. There exists a triangulated category $\kk^G$ (see \cite{euge} for countable $G$ and \cite{tesisema} for general $G$) endowed with a functor $j^G:\GAlgl\to\kk^G$ that is homotopy invariant, $G$-stable and excisive. This functor $j^G$ is moreover universal in the following sense: any functor $\GAlgl\to (\tri,\Omega)$ that is homotopy invariant, $G$-stable and excisive factors uniquely through $j^G$ \cite{euge}*{Theorem 4.1.1}.

Throughout this paper, by $\kk$ we shall mean the universal $M_{\N\times|G|}$-stable $\kk$-theory.

\begin{exa}
    Let $B$ be a $G$-algebra. By $G$-stability, the morphisms \eqref{eq:zigzag} induce an isomorphism $j^G(B)\cong j^G(M_GB)$ in $\kk^G$. To ease notation we often omit $j^G$ and write $B\cong M_GB$. By homotopy invariance, we have isomorphisms $B\cong B^{\Delta^n}$ in $\kk^G$, since $B^{\Delta^n}$ is isomorphic to the algebra of polynomials in $n$ variables with coefficients in $B$.
\end{exa}

\section{Adjoint theorems revisited}\label{sec:adtr}

Let $G$ be a group and let $H\subseteq G$ be a finite subgroup whose order $n$ is invertible in $\ell$.
Recall the notation and conventions from Example \ref{exa:GH}.
By \cite{euge}*{Theorem 5.2.1} and \cite{euge}*{Theorem 6.14} we have an adjunction isomorphism
\begin{equation}\label{eq:adj}
    \kk^G(A^{(G/H)}, B)\cong \kk(A, B\rtimes H)
\end{equation}
for any $A\in\Algl$ and any $B\in\GAlgl$. For $A\in\Algl$,  let $\unit_A:A\to A^{(G/H)}\rtimes H$ be the algebra homomorphism  defined by:
\[\unit_A(a)=a\chi_H\rtimes \frac1n\sum_{h\in H}h\]
Put $\eta_A=j(\unit_A)\in \kk(A,A^{(G/H)}\rtimes H)$. We will show that $\eta_A$ is a unit for the adjunction \eqref{eq:adj}.
For $B\in\GAlgl$, let $\counit_B:(B\rtimes H)^{(G/H)}\to M_G\otimes B$ be the $G$-algebra homomorphism defined by:
\begin{equation}\label{eq:defipsi}
    \counit_B\left((b\rtimes h)\chi_{wH}\right)=\sum_{p\in wH}e_{p,ph}\otimes (p\cdot b)
\end{equation}
Let $\varepsilon_B\in \kk^G((B\rtimes H)^{(G/H)},B)$ be the following composite in $\kk^G$, where the isomorphism on the right is given by the zig-zag \eqref{eq:zigzag}:
\begin{equation}\label{eq:couniH}
    \xymatrix{(B\rtimes H)^{(G/H)}\ar[r]^-{j^G(\counit_B)} & M_G\otimes B\cong B}
\end{equation}
We will show that $\varepsilon_B$ is a counit for the adjunction \eqref{eq:adj}.

\begin{lem}\label{lem:unit}
    For any $B\in\GAlgl$ we have $(\varepsilon_B\rtimes H)\circ \eta_{B\rtimes H}=\id_{B\rtimes H}$ in $\kk$.
\end{lem}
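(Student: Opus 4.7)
The plan is to unfold the composite at the level of algebra homomorphisms, recognise it up to inner conjugation as a corner embedding, and then use Lemma~\ref{lem:conju} together with matrix stability to identify it with $\id_{B\rtimes H}$ in $\kk$.

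First I would compute the image of $b\rtimes k\in B\rtimes H$ under the underlying composite
\[
B\rtimes H\xrightarrow{\unit_{B\rtimes H}} (B\rtimes H)^{(G/H)}\rtimes H\xrightarrow{\counit_B\rtimes H}(M_G\otimes B)\rtimes H\xrightarrow{R_{G,B}} M_G\otimes(B\rtimes H).
\]
Unwinding the definitions of $\unit$, $\counit_B$ and $R_{G,B}$ gives $\tfrac{1}{n}\sum_{h,p\in H}e_{p,h^{-1}pk}\otimes(p\cdot b\rtimes h)$. After the change of variable $q=h^{-1}pk$ and the identity $p\cdot b\rtimes pkq^{-1}=(1\rtimes p)(b\rtimes k)(1\rtimes q^{-1})$ in $B\rtimes H$, this becomes
\[
\tfrac{1}{n}\sum_{p,q\in H}e_{p,q}\otimes(1\rtimes p)(b\rtimes k)(1\rtimes q^{-1})\in M_G\otimes(B\rtimes H).
\]

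Next I would introduce the rank-one idempotent $E=\tfrac{1}{n}\sum_{p,q\in H}e_{p,q}\in M_G$ and the element
\[
u=1+\sum_{p\in H}e_{p,p}\otimes(1\rtimes p-1)\in \bigl(M_G\otimes(B\rtimes H)\bigr)^+,
\]
which is invertible with $u^{-1}=1+\sum_{p\in H}e_{p,p}\otimes(1\rtimes p^{-1}-1)$ (one may assume $B$ is unital; otherwise adjoin a unit, which does not affect the argument in $\kk$). A direct check shows that the displayed element equals $u\cdot(E\otimes(b\rtimes k))\cdot u^{-1}$. Lemma~\ref{lem:conju}, applied with the trivial group to the invertible element $u$, implies that inner conjugation by $u$ is the identity in $\kk$, so the composite coincides in $\kk$ with the algebra map $\mu\colon b\rtimes k\mapsto E\otimes(b\rtimes k)$.

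Finally, $E$ is Murray--von Neumann equivalent to the diagonal matrix unit $e_{1,1}$ via $v=\sum_{r\in H}e_{r,1}$ and $w=\tfrac{1}{n}\sum_{s\in H}e_{1,s}$, which satisfy $vw=E$ and $wv=e_{1,1}$. A Whitehead-type construction produces an invertible element of $M_{2}(M_G)^+$ conjugating $E\oplus 0$ to $0\oplus e_{1,1}$, and a second application of Lemma~\ref{lem:conju} identifies $\mu$ in $\kk$ with the corner inclusion $a\mapsto e_{1,1}\otimes a$. Composing with the matrix-stability isomorphism $M_G\otimes(B\rtimes H)\cong B\rtimes H$ (the inverse in $\kk$ of the zig-zag \eqref{eq:zigzag}, restricted to $H$ and crossed with $H$, used in the definition of $\varepsilon_B$) then yields $\id_{B\rtimes H}$. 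The main obstacle is the bookkeeping of the first step: one must carefully combine the definitions of $\unit$, $\counit_B\rtimes H$ and $R_{G,B}$, perform the change of variable, and recognise the result as $u(E\otimes-)u^{-1}$; once this is done Lemma~\ref{lem:conju} does all the heavy lifting.
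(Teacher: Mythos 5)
Your proposal follows essentially the same strategy as the paper: unfold the composite in coordinates, recognise it as an inner conjugate of $b\rtimes k\mapsto E\otimes(b\rtimes k)$ with $E=\tfrac{1}{n}\sum_{p,q\in H}e_{p,q}$, and use Lemma~\ref{lem:conju} together with matrix stability to collapse it to a corner embedding. Your conjugating element $u$ (supported only on $H$, living in the unitalization of $M_G\otimes(B\rtimes H)$) is a mild streamlining of the paper's $V=\sum_{g\in G}e_{g,g}\otimes(1\rtimes g)$, which is placed in the larger auxiliary algebra $\Gamma$ of row- and column-finite matrices; both accomplish the same conjugation. You also replace the paper's appeal to the fact (cited from \cite{euge}*{Remark 3.1.11}) that $E$ is already conjugate to $e_{1,1}$ inside $M_{|G|}$ by a Murray--von~Neumann equivalence and a Whitehead-type conjugation in $M_2(M_G)^+$; this is perfectly valid but costs an extra $M_2$-stabilization.

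The one step your plan genuinely glosses over is the last one. Having shown that $R_{G,B}\circ(\counit_B\rtimes H)\circ\unit_{B\rtimes H}$ agrees in $\kk$ with the corner inclusion $e_{1,1}\otimes-:B\rtimes H\to M_{|G|}\otimes(B\rtimes H)$ does not immediately give the lemma: the isomorphism $M_G\otimes B\cong B$ used in \eqref{eq:couniH} to define $\varepsilon_B$ is the zig-zag \eqref{eq:zigzag}, so $\varepsilon_B\rtimes H$ involves the zig-zag isomorphism $(M_G\otimes B)\rtimes H\cong B\rtimes H$, \emph{not} a priori the corner-inclusion isomorphism $M_{|G|}\otimes(B\rtimes H)\cong B\rtimes H$. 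One must check that, under $R_{G,B}$, these two identifications coincide in $\kk$; the paper opens its proof with a commutative diagram (involving $R_{G,B}$, $R_{G_\plus,B}$, $\iota$ and $\iota'$) establishing exactly this compatibility. Your parenthetical ``the zig-zag \eqref{eq:zigzag}, restricted to $H$ and crossed with $H$'' conflates the two and should be replaced by an explicit verification of this diagram.
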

\begin{proof}
    It is easily verified that the following diagram in $\kk$ commutes, where all the arrows are isomorphisms:
    \[
        \xymatrix{
        (M_G\otimes B)\rtimes H\ar[d]^-{\eqref{eq:R}}_-{R_{G,B}}\ar[r]^-{(\iota')_*} & (M_{G_\plus}\otimes B)\rtimes H\ar[d]^-{\eqref{eq:R}}_-{R_{G_\plus,B}} & B\rtimes H\ar[l]_-{\iota_*}\ar@/^1pc/[dl]^-{\iota_*} \\
        M_{|G|}\otimes (B\rtimes H)\ar[r]_-{(\iota')_*} & M_{|G_\plus|}\otimes (B\rtimes H)
        }
    \]
    Thus, the isomorphism $(M_G\otimes B)\rtimes H\cong B\rtimes H$ in $\kk$ induced by the zig-zag \eqref{eq:zigzag} equals the composite:
    \[
        \xymatrix@C=4em{
        (M_G\otimes B)\rtimes H\ar[r]_-{\cong}^-{j(R_{G,B})} & M_{|G|}\otimes (B\rtimes H) & B\rtimes H\ar[l]_-{j(e_{1,1}\otimes -)}^-{\cong}
        }
    \]
    To prove the lemma, it will be enough to show that the composite
    \[
        \xymatrix{
        B\rtimes H \ar[r]^-{\unit_{B\rtimes H}} & (B\rtimes H)^{(G/H)}\rtimes H\ar[r]^-{\counit_B\rtimes H} & (M_G\otimes B)\rtimes H\ar[r]^-{R_{G,B}} & M_{|G|}\otimes (B\rtimes H)
        }
    \]
    and the inclusion $e_{1,1}\otimes -:B\rtimes H\to M_{|G|}\otimes (B\rtimes H)$ induce the same morphism in $\kk$.
    Let $\Gamma$ be the algebra of matrices with coefficients in $\tilde{B}\rtimes G$ indexed by $G\times G$ that have only finitely many nonzero coefficients in each column and each row. Notice that $\Gamma$ is a unital algebra that contains $M_{|G|}\otimes (B\rtimes H)$ as a subalgebra. Put:
    \begin{equation}\label{eq:V}
        V=\sum_{g\in G}e_{g,g}\otimes (1\rtimes g)\in\Gamma
    \end{equation}
    We have:
    \begin{align*}
        [R_{G,B}\circ (\counit_B\rtimes H)\circ \unit_{B\rtimes H}](b\rtimes h) & =\frac1n\sum_{p,q\in H}e_{p,q}\otimes ((p\cdot b)\rtimes phq^{-1})     \\
                                                                                & =V\left(\frac1n\sum_{p,q\in H}e_{p,q}\otimes (b\rtimes h)\right)V^{-1}
    \end{align*}
    Moreover, $\frac1n\sum_{p,q\in H}e_{p,q}$ is a conjugate of $e_{1,1}$ in $M_{|G|}$; see \cite{euge}*{Remark 3.1.11}. By \cite{friendly}*{Proposition 2.2.6} we have
    \[j[R_{G,B}\circ (\counit_B\rtimes H)\circ \unit_{B\rtimes H}]=j(e_{1,1}\otimes -):B\rtimes H\to M_{|G|}\otimes (B\rtimes H)\]
    as we wanted to prove.
\end{proof}

\begin{lem}\label{lem:counit}
    For any $A\in\Algl$ we have
    $\varepsilon_{A^{(G/H)}}\circ\left[\left(\eta_A\right)^{(G/H)}\right]=\id_{A^{(G/H)}}$
    in $\kk^G$.
\end{lem}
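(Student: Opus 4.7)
The plan is to mirror the strategy of Lemma \ref{lem:unit}: construct a $G$-fixed invertible element in a unital overalgebra that conjugates one composite into the other, and then invoke Lemma \ref{lem:conju}.

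First, I would compute the composite of $G$-equivariant algebra homomorphisms
\[\varphi := \counit_{A^{(G/H)}} \circ (\unit_A)^{(G/H)} : A^{(G/H)} \longrightarrow M_G \otimes A^{(G/H)}\]
explicitly. Starting from $a\chi_{vH}$, one obtains $(a\chi_H \rtimes \tfrac{1}{n}\sum_h h)\chi_{vH}$; applying $\counit_{A^{(G/H)}}$ and using the identity $p \cdot (a\chi_H) = a\chi_{pH} = a\chi_{vH}$ valid for $p \in vH$, one finds
\[\varphi(a\chi_{vH}) = \tfrac{1}{n}\sum_{p, q \in vH} e_{p, q} \otimes a\chi_{vH} =: \pi_{vH} \otimes a\chi_{vH}.\]
Since $\varepsilon_{A^{(G/H)}}$ is defined in \eqref{eq:couniH} as $j^G(\counit_{A^{(G/H)}})$ composed with the $\kk^G$-isomorphism $M_G \otimes A^{(G/H)} \cong A^{(G/H)}$ coming from the zig-zag \eqref{eq:zigzag}, and since $j^G(\iota)$ is an isomorphism in $\kk^G$ by $G$-stability, the desired identity $\varepsilon_{A^{(G/H)}} \circ (\eta_A)^{(G/H)} = \id$ in $\kk^G$ reduces to the equality $j^G(\iota' \circ \varphi) = j^G(\iota)$ of morphisms $A^{(G/H)} \to M_{G_+} \otimes A^{(G/H)}$, where $\iota(a\chi_{vH}) = e_{+, +} \otimes a\chi_{vH}$ and $(\iota' \circ \varphi)(a\chi_{vH}) = \pi_{vH} \otimes a\chi_{vH}$.

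For each coset $\alpha \in G/H$, I would introduce
\[u_\alpha := \tfrac{1}{n}\sum_{p \in \alpha} e_{+, p}, \qquad v_\alpha := \sum_{p \in \alpha} e_{p, +} \in M_{G_+},\]
which satisfy $u_\alpha v_\alpha = e_{+, +}$ and $v_\alpha u_\alpha = \pi_\alpha$. A short calculation shows that $e_{+, +}$, $\pi_\alpha$, and $r_\alpha := 1 - e_{+, +} - \pi_\alpha$ are mutually orthogonal idempotents in the unitization $\widetilde{M_{G_+}}$ that are also orthogonal to $u_\alpha$ and $v_\alpha$, and that $u_\alpha^2 = v_\alpha^2 = 0$; consequently $W_\alpha := u_\alpha + v_\alpha + r_\alpha$ is an involution in $\widetilde{M_{G_+}}$ satisfying $W_\alpha e_{+, +} W_\alpha = \pi_\alpha$. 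Because $+$ is $G$-fixed, one has $g \cdot u_\alpha = u_{g\alpha}$, $g \cdot v_\alpha = v_{g\alpha}$ and $g \cdot \pi_\alpha = \pi_{g\alpha}$, so the formal sum
\[V := 1 + \sum_{\alpha \in G/H} (W_\alpha - 1) \otimes \chi_\alpha\]
defines a $G$-fixed involution in a suitable unital overalgebra $\Gamma$ of $M_{G_+} \otimes A^{(G/H)}$. Since each $b \in M_{G_+} \otimes A^{(G/H)}$ is supported on only finitely many cosets, conjugation by $V$ preserves $M_{G_+} \otimes A^{(G/H)}$, and a direct computation gives $V(e_{+, +} \otimes a\chi_\alpha)V^{-1} = \pi_\alpha \otimes a\chi_\alpha$. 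Hence $\phi^V \circ \iota = \iota' \circ \varphi$ as $G$-equivariant algebra homomorphisms, and Lemma \ref{lem:conju} yields $j^G(\phi^V) = \id$, so
\[j^G(\iota) = j^G(\phi^V \circ \iota) = j^G(\iota' \circ \varphi)\]
in $\kk^G$, as required.

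The main technical obstacle is a careful identification of the unital overalgebra $\Gamma$: since the sum indexed by $G/H$ is generally infinite, $V$ does not literally lie in $M_{G_+} \otimes A^{(G/H)}$ itself but in a multiplier-type algebra. This can be addressed either by working in an explicit multiplier algebra, or by constructing the conjugator piecewise over finite $G$-invariant subsets of $G/H$ and passing to a colimit, in either case ensuring compatibility with the hypotheses of Lemma \ref{lem:conju}.
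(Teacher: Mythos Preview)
Your approach is correct and matches the paper's proof almost exactly: both compute the composite explicitly, reduce to comparing $\iota$ and $\iota'\circ\varphi$ as maps $A^{(G/H)}\to M_{G_+}\otimes A^{(G/H)}$, exhibit a $G$-fixed invertible $V$ in a unital overalgebra conjugating one into the other, and invoke Lemma~\ref{lem:conju}. The obstacle you flag is precisely what the paper handles by taking the overalgebra to be $(\Gamma_{G_+}(\tilde A))^{G/H}$, the algebra of all functions $G/H\to\Gamma_{G_+}(\tilde A)$ with the pointwise $G$-action $(g\cdot f)(tH)=g\cdot f(g^{-1}tH)$; your $V$ then becomes simply the function $\alpha\mapsto W_\alpha$, which is $G$-fixed because $g\cdot W_\alpha=W_{g\alpha}$, and the one-sided multiplication conditions $VB,\,BV^{-1}\subseteq B$ hold since each $W_\alpha\in\widetilde{M_{G_+}}$ and $M_{G_+}$ is a two-sided ideal there. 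The paper's conjugator $V_{wH}$ differs from your $W_\alpha$ in the $(\plus,p)$ entries (it uses $1$ rather than $1/n$, so it is not an involution), but both satisfy $V_{wH}\,e_{+,+}\,V_{wH}^{-1}=\pi_{wH}$, which is all that is needed.
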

\begin{proof}
    It is easily verified that the composite
    \[
        \xymatrix@C=3em{
        A^{(G/H)}\ar[r]^-{(\eta_A)^{(G/H)}} & \left(A^{(G/H)}\rtimes H\right)^{(G/H)}\ar[r]^-{\varepsilon_{A^{(G/H)}}} & M_G\otimes A^{(G/H)}\ar[r]^-{\iota'} & M_{G_\plus}\otimes A^{(G/H)}
        }
    \]
    is given by:
    \begin{equation}\label{eq:compo}
        a\chi_{wH}\mapsto \frac1n\sum_{p,q\in H}e_{wp,wq}\otimes a\chi_{wH}
    \end{equation}
    To prove the lemma, it suffices to show that the above formula induces the same morphism as $\iota:A^{(G/H)}\to M_{G_\plus}\otimes A^{(G/H)}$, $\iota(a\chi_{wH})=e_{\plus,\plus}\otimes a\chi_{wH}$, upon applying $j^G:\GAlgl\to \kk^G$. Let $\tilde{A}$ be the unitalization of $A$ and let $\Gamma_{G_+}(\tilde{A})$ be the set of those matrices with coefficients in $\tilde{A}$, indexed over $G_\plus\times G_\plus$, that have finitely many nozero coefficients in each row and each column. Then $\Gamma_{G_\plus}(\tilde{A})$ is a $G$-algebra with the usual matrix multiplication and the $G$-action defined by $(g\cdot a)_{x,y}:=a_{g^{-1}\cdot x, g^{-1}\cdot y}$.
    Moreover, we have inclusions of $G$-algebras as follows:
    \[
        M_{G_\plus}\otimes A^{(G/H)}\subseteq \left(M_{G_\plus}\otimes A\right)^{G/H}\subseteq \left(\Gamma_{G_\plus}(\tilde{A})\right)^{G/H}
    \]
    The $G$-action on the right is described by
    \[
        (g\cdot f)(tH):=g\cdot f(g^{-1}tH),
    \]
    where $f:G/H\to \Gamma_{G_\plus}(\tilde{A})$ is a function and $g\in G$. We will show that there exists an invertible $V\in(\Gamma_{G_\plus}(\tilde{A}))^{G/H}$ such that the following diagram commutes, where the horizontal morphism is given by \eqref{eq:compo}:
    \begin{equation}\label{eq:phiV}\begin{gathered}
            \xymatrix{
            A^{(G/H)}\ar[r]\ar@/_/[dr]_-{\iota} & M_{G_\plus}\otimes A^{(G/H)}\ar[d]^-{\phi^V} \\
            & M_{G_\plus}\otimes A^{(G/H)}
            }
        \end{gathered}\end{equation}
    Once this is done, the result will follow by Lemma \ref{lem:conju}. For $wH\in G/H$, define:
    \begin{equation}\label{eq:vsH}
        V_{wH}:=\sum_{x\in wH}\left(\tfrac{n-1}{n}e_{x,x}+e_{\plus,x}+e_{x,\plus}\right)-\sum_{x,y\in wH}\tfrac{1}{n}e_{x,y}+\sum_{g\in G\setminus wH}e_{g,g}\in \Gamma_{G_\plus}(\tilde{A})
    \end{equation}
    Note that $g\cdot V_{wH}=V_{gwH}$ for all $g\in G$. We can picture $V_{wH}$ as a block diagonal matrix having an identity block in the coordinates corresponding to $g\in G\setminus wH$, and the following block in the coordinates corresponding to elements of $wH\cup\{\plus\}$:
    \[
        \begin{tikzpicture}[mymatrixenv]
            \matrix[mymatrix] (m)  {
                \frac{n-1}{n} & -\frac{1}{n}  &
                \cdots        & -\frac{1}{n}  & -\frac{1}{n}  & [0.5em,between borders] 1                   \\
                -\frac{1}{n}  & \frac{n-1}{n} &
                \cdots        & -\frac{1}{n}  & -\frac{1}{n}  & 1                                           \\
                \vdots        &               & \ddots        &                           & \vdots & \vdots \\
                -\frac{1}{n}  & -\frac{1}{n}  &
                \cdots        & \frac{n-1}{n} & -\frac{1}{n}  & 1                                           \\
                -\frac{1}{n}  & -\frac{1}{n}  &
                \cdots        & -\frac{1}{n}  & \frac{n-1}{n} & 1                                           \\[0.7em]
                1             & 1             & \cdots        & 1                         & 1      & 0      \\
            };
            \draw[dashed]([xshift=0.8em]m.north west -| m-1-5.east) to ([xshift=0.8em] m.south east -| m-2-5.east);
            \draw[dashed]([xshift=-0.4em,yshift=0.7em)]m-6-1.north west) to ([yshift=0.7em,xshift=0.4em] m-6-6.north east);
            \mymatrixbraceright{1}{5}{$wH$}
            \mymatrixbracetop{1}{5}{$wH$}
        \end{tikzpicture}
    \]
    It is easily verified that $V_{gH}$ is invertible with inverse given by:
    \[V_{wH}^{-1}:=\sum_{x\in wH}\left(\tfrac{n-1}{n}e_{x,x}+\tfrac1ne_{\plus,x}+\tfrac1n e_{x,\plus}\right)-\sum_{x,y\in wH}\tfrac{1}{n}e_{x,y}+\sum_{g\in G\setminus wH}e_{g,g}\in \Gamma_{G_\plus}(\tilde{A})\]
    Again, we can think of $V_{wH}^{-1}$ as a block diagonal matrix having an identity block in the coordinates corresponding to $g\in G\setminus wH$, and the following block in the coordinates corresponding to elements of $wH\cup\{\plus\}$:
    \[
        \begin{tikzpicture}[mymatrixenv]
            \matrix[mymatrix] (m)  {
                \frac{n-1}{n} & -\frac{1}{n}  &
                \cdots        & -\frac{1}{n}  & -\frac{1}{n}  & [0.5em,between borders] \frac1n                    \\
                -\frac{1}{n}  & \frac{n-1}{n} &
                \cdots        & -\frac{1}{n}  & -\frac{1}{n}  & \frac1n                                            \\
                \vdots        &               & \ddots        &                                 & \vdots  & \vdots \\
                -\frac{1}{n}  & -\frac{1}{n}  &
                \cdots        & \frac{n-1}{n} & -\frac{1}{n}  & \frac1n                                            \\
                -\frac{1}{n}  & -\frac{1}{n}  &
                \cdots        & -\frac{1}{n}  & \frac{n-1}{n} & \frac1n                                            \\[0.7em]
                \frac1n       & \frac1n       & \cdots        & \frac1n                         & \frac1n & 0      \\
            };
            \draw[dashed]([xshift=0.8em]m.north west -| m-1-5.east) to ([xshift=0.8em] m.south east -| m-2-5.east);
            \draw[dashed]([xshift=-0.4em,yshift=0.7em)]m-6-1.north west) to ([yshift=0.7em,xshift=0.4em] m-6-6.north east);
            \mymatrixbraceright{1}{5}{$wH$}
            \mymatrixbracetop{1}{5}{$wH$}
        \end{tikzpicture}
    \]
    Define $V^{\pm 1}:G/H\to \Gamma_{G_\plus}(\tilde{A})$ by $V^{\pm 1}(wH):=V_{wH}^{\pm 1}$. Then $V,V^{-1}\in (\Gamma_{G_\plus}(\tilde{A}))^{G/H}$ are mutual inverses and we have:
    \[
        V(M_{G_\plus}\otimes A^{(G/H)}),(M_{G_\plus}\otimes A^{(G/H)})V^{-1}\subseteq (M_{G_\plus}\otimes A^{(G/H)})
    \]
    Moreover, $g\cdot V=V$ for all $g\in G$.  An easy computation shows that the triangle \eqref{eq:phiV} commutes. This finishes the proof.
\end{proof}

\begin{prop}[cf. \cite{euge}*{Theorems 5.2.1 and 6.14}]\label{prop:adj}
    Let $G$ be a group and let $H\subseteq G$ be a finite subgroup whose order is invertible in $\ell$. Then the morphisms $\eta_A\in kk(A,A^{(G/H)}\rtimes H)$ and $\varepsilon_B\in kk^G((B\rtimes H)^{(G/H)},B)$ defined above are respectively the unit and the counit of an adjunction:
    \begin{equation}\label{eq:adjnnat}
        \kk^G(A^{(G/H)}, B)\cong \kk(A, B\rtimes H)
    \end{equation}
\end{prop}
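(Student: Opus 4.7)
My plan is to derive the proposition directly from the two preceding lemmas, which, read correctly, are precisely the two triangle identities. The strategy rests on the standard categorical fact: a pair of natural transformations $\eta\colon\id\Rightarrow RL$ and $\varepsilon\colon LR\Rightarrow\id$ satisfying the identities $R\varepsilon\circ\eta R=\id_R$ and $\varepsilon L\circ L\eta=\id_L$ exhibits $L\dashv R$ as an adjoint pair, with the hom-set bijection given by $f\mapsto R(f)\circ\eta_A$ and its inverse $g\mapsto\varepsilon_B\circ L(g)$.

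The first step is to verify that $L=(-)^{(G/H)}$ and $R=-\rtimes H$ really do descend to functors $\kk\to\kk^G$ and $\kk^G\to\kk$ respectively. For $L$, the natural isomorphism $A^{(G/H)}\cong A\otimes\ell^{(G/H)}$ with $\ell^{(G/H)}$ a flat $\ell$-module shows that this construction preserves homotopies, matrix stability, and extensions, so the composite $\Algl\to\GAlgl\xrightarrow{j^G}\kk^G$ factors uniquely through $j\colon\Algl\to\kk$ by the universal property of \cite{cortho}. For $R$, the fact that restriction to $H$ followed by crossed product with $H$ is homotopy invariant, $G$-stable, and excisive from $\GAlgl$ to $\Algl$ was established in \cite{euge}, so the universal property of $\kk^G$ again yields a functor.

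Next I would check the naturality of $\eta$ and $\varepsilon$. Naturality of $\eta_A$ in $A$ is immediate, since the formula $a\mapsto a\chi_H\rtimes\tfrac1n\sum_{h\in H}h$ is linear in $a$ and the idempotent $\tfrac1n\sum_{h\in H}h$ does not depend on $A$. Naturality of $\varepsilon_B$ in $B$ requires a bit more care: one first checks that $\counit_B$ of \eqref{eq:defipsi} commutes with every $G$-equivariant morphism $B\to B'$, which is a direct verification from the defining formula, and then uses that the isomorphism $M_G\otimes B\cong B$ in $\kk^G$ induced by the zig-zag \eqref{eq:zigzag} is natural in $B$, which is part of the definition of $G$-stability.

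With both functors and both natural transformations in place, Lemmas \ref{lem:unit} and \ref{lem:counit} supply exactly the two triangle identities, so the categorical criterion recalled above yields the adjunction \eqref{eq:adjnnat} with $\eta$ and $\varepsilon$ as unit and counit. Essentially all the substantive computation has been done in those two lemmas; the only residual obstacle in the proposition itself is bookkeeping around the zig-zag \eqref{eq:zigzag} when verifying naturality of $\varepsilon$, which is mild. The coincidence of this adjunction with the one obtained in \cite{euge}*{Theorems 5.2.1 and 6.14} by composing Green--Julg with induction/restriction is then automatic from the uniqueness of adjoints up to natural isomorphism.
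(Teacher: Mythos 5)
Your proof is correct and takes essentially the same approach as the paper, which simply observes that the proposition follows immediately from Lemmas \ref{lem:unit} and \ref{lem:counit}; you have spelled out the standard triangle-identity criterion and the routine functoriality and naturality checks that the paper leaves implicit (and which are anyway already available from \cite{euge}).
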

\begin{proof}
    It follows immediately from Lemmas \ref{lem:unit} and \ref{lem:counit}.
\end{proof}

\section{Crossed product with \texorpdfstring{$G/H$}{G//H}}\label{sec:crossedpro}

Let $\OrGFin$ be the full subcategory of $\OrG$ (see Definition \ref{defi:OrG}) whose objects are those $G/H$ with finite $H$. We want to show that the adjunction \eqref{eq:adjnnat} is natural in $G/H$. The first thing to do is to replace the right-hand side of \eqref{eq:adjnnat} by an actual functor on $\OrGFin$.

\subsection{Non unital linear categories}\label{sec:lincat}

\begin{defi}
    A \emph{non-unital $\ell$-linear category} $\cC$ consists of:
    \begin{enumerate}
        \item a set of objects $\ob\cC$,
        \item an $\ell$-module $\cC(x,y)$ for every $x,y\in\ob\cC$, and
        \item $\ell$-module homomorphisms
              \begin{equation}\label{eq:compC} \circ:\cC(y,z)\otimes\cC(x,y)\to\cC(x,z) \end{equation}
              for every $x,y,z\in\ob\cC$, that are associative in the obvious way.
    \end{enumerate}
\end{defi}

Non-unital $\ell$-linear categories with only one object can be identified with (non-unital) algebras. In the sequel, we will refer to non-unital $\ell$-linear categories simply as \emph{linear categories}.

\begin{defi}
    Let $\cC$ and $\cD$ be linear categories. A \emph{linear functor} $F:\cC\to\cD$ consists of a function $F:\ob\cC\to\ob\cD$ together with $\ell$-module homorphisms
    \begin{equation}\label{eq:linearF} F_{x,y}:\cC(x,y)\to\cD(F(x),F(y)) \end{equation}
    that are compatible with the composition.
\end{defi}

We will write $\Catl$ for the category whose objects are linear categories and whose morphisms are linear functors.

\begin{exa}\label{exa:defiCrossedProduct}
    Let $B$ be a $G$-algebra and let $H\subseteq G$ be a subgroup. We proceed to define a linear category $B\rtimes G/H$. The objects of $B\rtimes G/H$ are the elements of $G/H$ and the hom-modules are defined as:
    \[(B\rtimes G/H)(uH, vH):=B\otimes \ell[vHu^{-1}]\]
    The composition law in $B\rtimes G/H$ is given by:
    \[
        \xymatrix@R=0em{
        (B\rtimes G/H)(vH, wH)\otimes (B\rtimes G/H)(uH, vH)\ar[r]^-{\circ} & (B\rtimes G/H)(uH, wH) \\
        (b\otimes g)\otimes (\tilde{b}\otimes \tilde{g})\ar@{|->}[r] & b(g\cdot \tilde{b})\otimes g\tilde{g}
        }
    \]
    In the terminology of \cite{corel}, $B\rtimes G/H$ is the crossed product of $B$ with the transport groupoid of $G/H$. It is easily verified that $B\rtimes G/H$ depends covariantly on $B$ and on $G/H$.
\end{exa}

Let $\cC$ and $\cD$ be linear categories. The \emph{tensor product} $\cC\otimes \cD$ is the linear category with objects $\ob(\cC)\times\ob(\cD)$ and such that:
\[(\cC\otimes\cD)((c,d),(\tilde{c}, \tilde{d}))=\cC(c,\tilde{c})\otimes\cD(d, \tilde{d})\] Composition is defined in the usual way, using the composition laws in $\cC$ and $\cD$ and the commutativity of the tensor product of $\ell$-modules.

We proceed to recall some definitions from \cite{corel}*{Section 3}. Let $\cC$ be a linear category. Put:
\[
    \cA(\cC)=\bigoplus_{x,y\in\ob \cC}\cC(x,y)
\]
If $f\in\cA(\cC)$, write $f_{y,x}$ for its component in $\cC(x,y)$. Then $\cA(\cC)$ is an algebra with multiplication given by:
\[
    (gf)_{y,x}=\sum_{z\in\ob\cC}g_{y,z}\circ f_{z,x}
\]

\begin{exa}
    Let $\cC$ and $\cD$ be linear categories. It is easily verified that:
    \[\cA(\cC\otimes\cD)\cong\cA(\cC)\otimes\cA(\cD)\]
\end{exa}

\begin{exa}\label{exa:subalgebraMGH}
    Let $B$ be a $G$-algebra and let $H\subseteq G$ be a subgroup.
    We can regard $\cA\left(B\rtimes G/H\right)$ as a subalgebra of $M_{|G/H|}(B\rtimes G)$ using the inclusion that sends $b\otimes g\in(B\rtimes G/H)(uH, vH)$ to $e_{vH,uH}\otimes (b\rtimes g)$.
\end{exa}

A disadvantage of $\cA(\cC)$ is that it is not natural with respect to all linear functors, but only with respect to those that are injective on objects; see \cite{corel}*{p.1231}. To fix this, one defines the algebra $\cR(\cC)$ \cite{corel}*{Section 3.4}. If $M$ is an $\ell$-module, write $T(M)=\oplus_{n\geq 1}M^{\otimes n}$ for the unaugmented tensor algebra. Put:
\[
    \cR(\cC)=T\left(\cA(\cC)\right)/\langle\left\{g\otimes f-g\circ f: f\in\cC(x,y),\, g\in\cC(y,z),\, x,y,z\in\ob\cC\right\}\rangle
\]
This defines a functor $\cR:\Catl\to\Algl$.

\begin{rem}
    Let $G$ be a group. One can define a \emph{$G$-category} as a linear category $\cC$ such that the hom-modules $\cC(x,y)$ are $G$-modules and the composition law \eqref{eq:compC} is $G$-equivariant, endowing $\cC(y,z)\otimes\cC(x,y)$ with the diagonal $G$-action. If $\cC$ and $\cD$ are $G$-categories, a \emph{$G$-functor} $F:\cC\to\cD$ is a linear functor such that the morphisms \eqref{eq:linearF} are $G$-equivariant. These definitions give rise to a category $\GCatl$ whose objects are $G$-categories and whose morphisms are $G$-functors.

    If $\cC$ is a $G$-category, then $\cA(\cC)$ and $\cR(\cC)$ are $G$-algebras in a natural way. Thus, we have a functor $\cR:\GCatl\to\GAlgl$.
\end{rem}

\begin{exa}\label{exa:rtensor}
    Let $\cC$ be a linear category and let $D$ be an algebra. We claim that there is a natural morphism:
    \[\cR(\cC\otimes D)\rightarrow \cR(\cC)\otimes D\]
    To see this, first note that there is a linear functor $\cC\to\cR(\cC)$ that takes $f\in\cC(x,y)$ to the class in $\cR(\cC)$ of $f\in\cC(x,y)\subseteq \cA(\cC)\subseteq T(\cA(\cC))$. Upon tensoring this functor with $D$ and then applying $\cR(-)$, we get the desired morphism:
    \[
        \cR(\cC\otimes D)\to \cR(\cR(\cC)\otimes D)=\cR(\cC)\otimes D
    \]
    If $\cC$ is a $G$-category and $D$ is a $G$-algebra, then this is a morphism of $G$-algebras.
\end{exa}

For a linear category $\cC$, there is a morphism $p:\cR(\cC)\to\cA(\cC)$ induced by multiplication in $\cA(\cC)$.

\begin{lem}[cf. \cite{corel}*{Lemma 3.4.3}]\label{lem:piso} Let $\cC$ be a linear category (respectively a $G$-category). Then the morphism
    \[p:\cR(\cC)\to\cA(\cC)\]
    induces an isomorphism in $\kk$ (resp. in $\kk^G$).
\end{lem}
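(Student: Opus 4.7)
The plan is to adapt the proof of \cite{corel}*{Lemma 3.4.3}, which establishes the lemma in the non-equivariant case, by carefully tracking $G$-equivariance throughout the argument.

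First, I would observe that $p$ is a surjection with an $\ell$-linear (and, when $\cC$ is a $G$-category, $G$-linear) section given by the composite
$$\cA(\cC)\hookrightarrow T(\cA(\cC))\twoheadrightarrow \cR(\cC),$$
because the relations defining $\cR(\cC)$ only simplify pairs that are composable in $\cC$, so $p$ restricted to the image of this composite (the classes of degree-one tensors) is the identity. Denoting the kernel by $I$, this yields an extension of algebras (resp.\ of $G$-algebras) in the sense of Section \ref{subsec:excision}. Excision then reduces the claim to showing that $I$ is isomorphic to $0$ in $\kk$ (resp.\ $\kk^G$). Concretely, $I$ is the two-sided ideal of $\cR(\cC)$ generated by formal tensors $g\otimes f$ for which $f$ and $g$ are not composable in $\cC$.

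In the non-equivariant setting, this vanishing is precisely \cite{corel}*{Lemma 3.4.3}. The argument there proceeds by constructing an Eilenberg-swindle-type invertible element $V$ inside an $M_\N$-amplification of a unitalisation of $\cR(\cC)$ such that conjugation by $V$ implements a contracting endomorphism of $I$; the conclusion then follows from the non-equivariant analogue of Lemma \ref{lem:conju}, after checking that $V$ normalises the relevant subalgebra.

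For the equivariant statement, I would verify that this construction goes through $G$-equivariantly. When $\cC$ is a $G$-category the induced $G$-action on $T(\cA(\cC))$ descends to $\cR(\cC)$, leaves $I$ invariant, and renders the section above $G$-equivariant, so that excision applies in $\kk^G$. The crucial point is that the element $V$ used in the Eilenberg swindle is built from the canonical matrix units of $M_\N$ together with identity elements of the unitalisation; since $G$ acts trivially on the matrix indices $\N$ and fixes the identity, $V$ is $G$-fixed, and Lemma \ref{lem:conju} applies in $\kk^G$. This $G$-fixedness of the swindling element is the main technical point to verify, but I expect no serious obstruction, since the swindle is of a purely combinatorial nature and does not interact with the $G$-action on the hom-modules of $\cC$.
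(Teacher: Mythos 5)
The paper's proof consists of the single sentence ``The proof of \cite{corel}*{Lemma 3.4.3} carries on verbatim in this setting,'' so your proposal is necessarily a reconstruction of the referenced argument plus an equivariance check. Your general framework is in line with this: the $\ell$-linear (equivariantly $G$-linear) section $\cA(\cC)\hookrightarrow T(\cA(\cC))\twoheadrightarrow\cR(\cC)$ does split $p$, excision reduces the claim to the $\kk$- (resp.\ $\kk^G$-) triviality of the kernel $I$, and the key observation enabling the ``verbatim'' transfer is precisely the one you isolate, namely that any conjugating element in the argument lives in a matrix ambient on which $G$ acts trivially, so Lemma~\ref{lem:conju} applies.

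That said, one sentence of your account is not mathematically coherent as written: ``conjugation by $V$ implements a contracting endomorphism of $I$.'' Conjugation by an invertible element is an automorphism and cannot be contracting --- in particular it cannot be null-homotopic unless $I$ is already zero, which is what one is trying to prove. If an Eilenberg-swindle mechanism is used, the role of the $G$-fixed $V$ would be, via Lemma~\ref{lem:conju}, to identify in $\kk^G$ a homomorphism $\iota_{11}\oplus\phi\colon I\to M_\N I$ with $\phi$ itself, thereby forcing $[\id_I]=0$; $V$ is not itself a contraction and Lemma~\ref{lem:conju} yields the identity, not zero. You should either rephrase this step accordingly or, better, verify against \cite{corel}*{Lemma 3.4.3} that the argument there really is a swindle of this shape rather than, say, an explicit polynomial homotopy or a filtration argument: since the paper gives no detail beyond the citation, the exact mechanism of the $\kk$-triviality of $I$ is the one genuine gap in your reconstruction and the one point that cannot be checked from the present paper alone.
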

\begin{proof}
    The proof of \cite{corel}*{Lemma 3.4.3} carries on verbatim in this setting.
\end{proof}

\begin{coro}\label{coro:rtensor}
    Let $\cC\in\Catl$ and $D\in\Algl$ (resp. $\cC\in\GCatl$ and $D\in\GAlgl$). Then the morphism
    \[\cR(\cC\otimes D)\rightarrow \cR(\cC)\otimes D\]
    of Example \ref{exa:rtensor} is an isomorphism in $\kk$ (resp. in $\kk^G$).
\end{coro}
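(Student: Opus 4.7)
The plan is to reduce the statement to Lemma \ref{lem:piso} by embedding the morphism of Example \ref{exa:rtensor} in a commutative square whose other three sides are understood. Consider the square in $\Algl$ (resp.\ $\GAlgl$)
\[
\xymatrix@C=3em{
\cR(\cC\otimes D)\ar[r]\ar[d]_-{p} & \cR(\cC)\otimes D\ar[d]^-{p\otimes \id_D}\\
\cA(\cC\otimes D)\ar[r]^-{\cong} & \cA(\cC)\otimes D
}
\]
where the top horizontal is the natural morphism of Example \ref{exa:rtensor}, the bottom horizontal is the canonical algebra isomorphism coming from $\cA(\cC\otimes D)\cong \cA(\cC)\otimes\cA(D)=\cA(\cC)\otimes D$, and both verticals are the multiplication maps $p$ of Lemma \ref{lem:piso}.

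First I would check that the square commutes. Since all four morphisms are algebra homomorphisms and $\cR(\cC\otimes D)$ is generated as an algebra by the image of $\cA(\cC\otimes D)=\bigoplus_{x,y\in\ob\cC}\cC(x,y)\otimes D$, it suffices to chase a generator $f\otimes d$ with $f\in\cC(x,y)$ and $d\in D$. Tracing through Example \ref{exa:rtensor}---whose top map is $\cR$ applied to the tensor with $D$ of the linear functor $\cC\to\cR(\cC)$, combined with the identification $\cR(\cR(\cC)\otimes D)=\cR(\cC)\otimes D$ coming from the fact that $\cR(E)=E$ for any one-object category $E$---both compositions send $f\otimes d$ to $f\otimes d\in \cA(\cC)\otimes D$.

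Next I would argue that three of the four arrows are $\kk$- (resp.\ $\kk^G$-) isomorphisms, so the fourth must be one as well. The left vertical is handled by Lemma \ref{lem:piso}, and the bottom is already an isomorphism of algebras. For the right vertical I would invoke that $-\otimes D:\Algl\to\Algl$ (resp.\ $\GAlgl\to\GAlgl$) is homotopy invariant, matrix stable and excisive; excisiveness holds because any extension of algebras splits as an $\ell$-module sequence and this property is preserved by tensoring with $D$. By the universal property of $\kk$ (resp.\ $\kk^G$) recalled in Section \ref{subsec:bivariant}, this functor descends to an endofunctor of the bivariant category and hence preserves $\kk$-isomorphisms. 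Applied to the $\kk$-isomorphism $p:\cR(\cC)\to\cA(\cC)$ of Lemma \ref{lem:piso}, this shows that $p\otimes\id_D$ is a $\kk$- (resp.\ $\kk^G$-) isomorphism.

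The only nontrivial step is the commutativity verification, which is pure bookkeeping owing to the two successive applications of $\cR$ used to define the natural map; everything else is an immediate application of the universal properties of the bivariant $K$-theory categories.
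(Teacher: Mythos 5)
Your proof is correct and follows essentially the same strategy as the paper's: set up the commutative square comparing $\cR$ and $\cA$, verify it on generators $f\otimes d$, and conclude from Lemma~\ref{lem:piso}. You additionally spell out why $p\otimes\id_D$ is a $\kk$-isomorphism (via descent of $j(-\otimes D)$ through the universal property), a point the paper leaves implicit; one small precision for the equivariant statement is that the universal property of $\kk^G$ from \cite{euge}*{Theorem 4.1.1} requires $G$-stability rather than mere matrix stability of $j^G(-\otimes D)$, which does hold but should be named as such.
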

\begin{proof}
    It is easily verified that the following diagram commutes in  $\Algl$ (resp. in $\GAlgl$):
    \[\xymatrix{\cR(\cC\otimes D)\ar[r]\ar[d]_-{p} & \cR(\cC)\otimes D\ar[d]^-{p\otimes D} \\
        \cA(\cC\otimes D)\ar[r]^-{\cong} & \cA(\cC)\otimes D}\]
    Indeed, it suffices to check commutativity on the generators $f\otimes d$, and this is immediate. The result follows from Lemma \ref{lem:piso}.
\end{proof}

\subsection{Crossed product with \texorpdfstring{$G/H$}{G/H}}\label{sec:crospro}
Fix $G/H\in\OrG$ and recall the definition of $B\rtimes G/H$ from Example \ref{exa:defiCrossedProduct}. We claim that the composite functor
\begin{equation}\label{eq:compFunc}
    \xymatrix@C=6em{\GAlgl \ar[r]^-{-\rtimes G/H} & \Catl\ar[r]^-{\cR} & \Algl\ar[r]^-{j} & \kk}
\end{equation}
factors through $j^G:\GAlgl\to\kk^G$. To prove this, it suffices to show that it is excisive, homotopy invariant and $G$-stable \cite{euge}*{Theorem 4.1.1}. Homotopy invariance and $G$-stability follow easily from the following.

\begin{lem}\label{lem:natisocompoGH}
    Let $G$ be a group, let $H\subseteq G$ be a subgroup and let $uH\in G/H$. Then there is a natural isomorphism
    \begin{equation}\label{eq:jrtimesH}
        \xymatrix{
        \nu_{uH}:j(-\rtimes uHu^{-1})\ar[r]^-{\cong} & j\cR(-\rtimes G/H)
        }
    \end{equation}
    of functors $\GAlgl\to\kk$.
\end{lem}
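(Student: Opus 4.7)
The plan is to define the natural transformation $\nu_{uH,B}$ as the composite
\[
    j(B\rtimes uHu^{-1}) \xrightarrow{j(\iota_{uH})} j\cA(B\rtimes G/H) \xleftarrow[\cong]{j(p)} j\cR(B\rtimes G/H),
\]
where $\iota_{uH}\colon B\rtimes uHu^{-1}=(B\rtimes G/H)(uH,uH)\hookrightarrow\cA(B\rtimes G/H)$ is the corner inclusion of the endomorphism algebra at the object $uH$, and $p\colon\cR(B\rtimes G/H)\to\cA(B\rtimes G/H)$ is the $\kk$-equivalence of Lemma \ref{lem:piso}. Both $\iota_{uH}$ and $p$ are visibly natural in $B\in\GAlgl$, so once they are defined the naturality of $\nu_{uH}$ is automatic. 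The content of the lemma is thus the assertion that $j(\iota_{uH})$ is an isomorphism in $\kk$.

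The strategy for this is to realize $\cA(B\rtimes G/H)$ as an honest matrix algebra over $B\rtimes uHu^{-1}$. I would pick a system of coset representatives $\{\tau_{vH}\}_{vH\in G/H}$ with $\tau_{uH}=u$, and define an algebra map
\[
    \phi\colon (B\rtimes uHu^{-1})\otimes M_{G/H}\longrightarrow\cA(B\rtimes G/H)
\]
by sending $(b\rtimes g)\otimes e_{vH,wH}$ with $g\in uHu^{-1}$ to $(\tau_{vH}u^{-1}\cdot b)\rtimes(\tau_{vH}u^{-1}gu\tau_{wH}^{-1})$, regarded as an element of the summand $(B\rtimes G/H)(wH,vH)=B\otimes\ell[vHw^{-1}]$. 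On each $(vH,wH)$-summand $\phi$ is the composite of the algebra automorphism ``act by $\tau_{vH}u^{-1}$'' of $B$ with the bijection $uHu^{-1}\to vHw^{-1}$ given by $g\mapsto\tau_{vH}u^{-1}gu\tau_{wH}^{-1}$, so $\phi$ is bijective. A direct computation using the composition rule of Example \ref{exa:defiCrossedProduct} and the identity $(\tau_{vH}u^{-1})\cdot(g\cdot b')=(\tau_{vH}u^{-1}g)\cdot b'$ shows that $\phi$ preserves multiplication. The formula for $\phi$ uses $B$ only through its $G$-action, so $\phi$ is natural in $B\in\GAlgl$.

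The choice $\tau_{uH}=u$ makes the corner inclusion $b\rtimes g\mapsto(b\rtimes g)\otimes e_{uH,uH}$ of $B\rtimes uHu^{-1}$ into $(B\rtimes uHu^{-1})\otimes M_{G/H}$ postcompose with $\phi$ to yield precisely $\iota_{uH}$. Consequently $j(\iota_{uH})=j(\phi)\circ j(\text{corner})$. Now $j(\phi)$ is a $\kk$-iso because $\phi$ is an algebra isomorphism, and the corner inclusion is a $\kk$-iso by matrix stability: our $\kk$ is by convention the universal $M_{\N\times|G|}$-stable theory, and $|G/H|\leq|G|\leq|\N\times G|$ makes $M_{G/H}$-stability available. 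The main obstacle is the bookkeeping-heavy verification that $\phi$ is a ring homomorphism; once that is in hand, the rest falls out immediately.
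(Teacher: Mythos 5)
Your proof is correct and follows essentially the same route as the paper's: the paper also reduces to showing that $\cA(B\rtimes G/H)$ is isomorphic as an algebra to $M_{|G/H|}(B\rtimes uHu^{-1})$ compatibly with the corner inclusion at $uH$, and it writes down the inverse of your $\phi$ (called $\alpha$ there, built from the same choice of section $s:G/H\to G$ with $s(uH)=u$). The only cosmetic difference is that the paper defines $\nu_{uH}$ directly as $j\cR(\mathrm{incl})$ for the inclusion of the full subcategory on the object $uH$, rather than as $j(p)^{-1}\circ j(\iota_{uH})$; these agree in $\kk$ since $p\circ\cR(\mathrm{incl})=\iota_{uH}$ on the nose.
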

\begin{proof}
    Let $B\in\GAlgl$. Consider $B\rtimes uHu^{-1}\subseteq B\rtimes G/H$ as the full subcategory whose only object is $uH$. Upon applying $\cR$ to this inclusion, we get an algebra homomorphism:
    \[
        B\rtimes uHu^{-1}=\cR(B\rtimes uHu^{-1})\longrightarrow \cR\left(B\rtimes G/H\right)
    \]
    Let $\nu_{uH}$ be the image of this morphism in $\kk$. Clearly, $\nu_{uH}$ is a natural transformation $j(-\rtimes uHu^{-1})\to j\cR(-\rtimes G/H)$. To finish the proof, we will show that $\nu_{uH}$ is an isomorphism. We claim that there is an isomorphism $\alpha$ that fits in the following commutative diagram in $\kk$:
    \[
        \xymatrix@C=3em{
        B\rtimes uHu^{-1}\ar[r]^-{\nu_{uH}}\ar@/_3pc/[ddr]_-{e_{uH,uH}\otimes -} & \cR\left(B\rtimes G/H\right)\ar[d]^-{p}_-{\cong} \\
        & \cA\left(B\rtimes G/H\right)\ar[d]_-{\cong}^-{\alpha} \\
        & M_{|G/H|}(B\rtimes uHu^{-1})
        }
    \]
    Here, the bent arrow is induced by the inclusion into the $(uH,uH)$-coefficient and it is an isomorphism by matrix invariance. It follows that $\nu_{uH}$ is an isomorphism too. The isomorphism $\alpha$ is constructed as in the proof of \cite{corel}*{Lemma 3.2.6}. More precisely, let $s:G/H\to G$ be a section of the projection such that $s(uH)=u$. Write $\hat{g}=s(gH)$ for $g\in G$. For $b\otimes g\in(B\rtimes G/H)(sH,tH)$ put:
    \[\alpha(b\otimes g)=e_{tH,sH}\otimes ((u\hat{t}^{-1})\cdot b)\rtimes u\hat{t}^{-1}g\hat{s}u^{-1}\]
    It is straightforward to verify that this formula defines an isomorphism of algebras $\alpha:\cA\left(B\rtimes G/H\right)\to M_{|G/H|}(B\rtimes uHu^{-1})$.
\end{proof}

\begin{coro}[cf. \cite{euge}*{Proposition 5.1.2 and Section 6}]\label{coro:rtimesGHhtpyinv}
    Let $G$ be a group and let $H\subseteq G$ be a subgroup. Then the functor
    \[j\cR(-\rtimes G/H):\GAlgl\to\kk\]
    is homotopy invariant and $G$-stable.
\end{coro}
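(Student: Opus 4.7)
The plan is to transfer both desired properties from the functor $j(-\rtimes H)$ to $j\cR(-\rtimes G/H)$ via the natural isomorphism supplied by Lemma \ref{lem:natisocompoGH}. Specializing that lemma to $u=1$, so that $uH = H$ and $uHu^{-1} = H$, one obtains a natural isomorphism $\nu_H:j(-\rtimes H)\xrightarrow{\cong}j\cR(-\rtimes G/H)$ of functors $\GAlgl\to\kk$. Since both homotopy invariance and $G$-stability are defined by the requirement that a functor send a specified class of morphisms in $\GAlgl$ to isomorphisms in $\kk$, these properties are preserved by any natural isomorphism of functors. Hence it suffices to verify them for $j(-\rtimes H)$.

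Next, the functor $j(-\rtimes H):\GAlgl\to\kk$ factors as the composition of restriction $\GAlgl\to\HAlgl$, the crossed product $-\rtimes H:\HAlgl\to\Algl$, and the universal functor $j:\Algl\to\kk$. Its homotopy invariance and $G$-stability are precisely the content of \cite{euge}*{Proposition 5.1.2 and Section 6}, so one may simply cite those results. For a self-contained verification the argument is as follows. Homotopy invariance reduces, via the natural algebra isomorphism $(B[t])\rtimes H\cong (B\rtimes H)[t]$ (which is available because the $G$-action on $t$ is trivial) compatible with evaluations at $0$ and $1$, to the homotopy invariance of $j$. For $G$-stability, one applies $-\rtimes H$ to the zig-zag \eqref{eq:zigzag} and invokes the isomorphism $R_{S,B}$ of \eqref{eq:R} with $H$ in place of $G$ to identify $(M_{S}\otimes B)\rtimes H$ with $M_{|S|}\otimes (B\rtimes H)$; this converts the image of \eqref{eq:zigzag} into a standard matrix-stability zig-zag, which $j$ sends to isomorphisms by $M_{\N\times|G|}$-stability.

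The only mildly delicate point will be the matching up of the image of \eqref{eq:zigzag} under $-\rtimes H$ with the corresponding matrix-stability zig-zag, but this is exactly the bookkeeping already carried out in the commutative diagram at the beginning of the proof of Lemma \ref{lem:unit}, so no new computation is required. Accordingly, the entire argument is short: invoke Lemma \ref{lem:natisocompoGH} to reduce to $j(-\rtimes H)$, then cite \cite{euge}*{Proposition 5.1.2 and Section 6} (or supply the direct verification above).
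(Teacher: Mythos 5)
Your reduction to $j(-\rtimes H)$ via Lemma \ref{lem:natisocompoGH} matches the paper, and your treatment of homotopy invariance (either by the isomorphism $(B[t])\rtimes H\cong (B\rtimes H)[t]$ or the paper's observation that $-\rtimes H$ preserves homotopies) is correct.

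However, your ``self-contained verification'' of $G$-stability has a genuine gap. You check only that the image under $-\rtimes H$ of the specific zig-zag \eqref{eq:zigzag} is sent to isomorphisms in $\kk$. But as the paper explicitly states in the preliminaries, the fact that a $G$-stable functor inverts the morphisms in \eqref{eq:zigzag} is a \emph{consequence} of $G$-stability, not its definition. The actual definition (recalled in the paper's proof, referencing \cite{euge}*{Section 3.1}) quantifies over all pairs $(\cW_1,B_1)$, $(\cW_2,B_2)$ of $G$-modules by locally finite automorphisms with $\card(B_i)\leq\card(\N\times G)$, and requires that the corner inclusion $A\otimes \End_\ell^F(\cW_1)\to A\otimes\End_\ell^F(\cW_1\oplus\cW_2)$ become an isomorphism. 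The paper's proof verifies this general condition by applying $-\rtimes H$, invoking \cite{euge}*{Proposition 5.1.1} to commute the crossed product past the tensor factor, and then using $M_{\N\times|G|}$-stability of $\kk$. Your verification covers only the single instance corresponding to \eqref{eq:zigzag}, so it does not establish $G$-stability. The bare citation of \cite{euge}*{Proposition 5.1.2 and Section 6} might in principle carry the argument (the corollary statement marks this as a ``cf.''), but as written your proof presents the incomplete zig-zag check as an adequate substitute, which it is not.
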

\begin{proof}
    Write $F:\GAlgl\to\kk$ for the functor $F=j(-\rtimes H)$. By Lemma \ref{lem:natisocompoGH} it suffices to show that $F$ is homotopy invariant and $G$-stable. The functor $-\rtimes H:\GAlgl\to\Algl$ is easily seen to send homotopic morphisms in $\GAlgl$ to homotopic morphisms in $\Algl$. It follows that $F$ is homotopy invariant. Recall the definition of $G$-stable functor from \cite{euge}*{Section 3.1}. Let $(\cW_1, B_1)$ and $(\cW_2, B_2)$ be $G$-modules by locally finite automorphisms such that $\card(B_i)\leq \card(\N\times G)$ for $i=1,2$ and let $A$ be a $G$-algebra. Then the inclusion
    \[
        \left(A\otimes \End_{\ell}^F(\cW_1)\right)\rtimes H\overset{\tilde{\iota}}{\to}\left(A
        \otimes \End_{\ell}^F(\cW_1\oplus \cW_2)\right)\rtimes H
    \]
    is identified with
    \[
        (A\rtimes H)\otimes \End_{\ell}^F(\cW_1)\overset{\tilde{\iota}}{\to}(A\rtimes H)
        \otimes \End_{\ell}^F(\cW_1\oplus \cW_2)
    \]
    by \cite{euge}*{Proposition 5.1.1}.
    The latter becomes an isomorphism upon applying $j$ by $M_{\N\times|G|}$-stability of $\kk$. It follows that $F$ is $G$-stable.
\end{proof}

We still have to show that the composite of the functors in \eqref{eq:compFunc} is excisive.

\begin{cons}\label{cons:partial}
    Let $\cE:A\to B\to C$ be an extension in $\GAlgl$ (that splits in $\GModl$). We will to construct a triangle in $\kk$ as follows, that is natural with respect to morphisms of extensions:
    \[
        \xymatrix@C=2.7em{
        \Omega(\cR(C\rtimes G/H))\ar[r]^-{\partial_{\cE\rtimes G/H}} & \cR(A\rtimes G/H)\ar[r] & \cR(B\rtimes G/H)\ar[r] & \cR(C\rtimes G/H)
        }
    \]
    Let $\nu_H:j(-\rtimes H)\to j\cR(-\rtimes G/H)$ be the natural isomorphism \eqref{eq:jrtimesH}. Note that $j(-\rtimes H)$ is excisive, since $-\rtimes H:\GAlgl\to\Algl$ preserves extensions. To simplify notation, we omit explicit mention to $j$ for the rest of the proof. We have the following commutative diagram of solid arrows in $\kk$, where the top row is a triangle:
    \[
        \xymatrix@C=2.7em{
        \Omega(C\rtimes H)\ar[r]^-{\partial_{\cE\rtimes H}}\ar[d]_-{\cong}^-{\Omega(\nu_H)} & A\rtimes H\ar[d]_-{\cong}^-{\nu_H}\ar[r] & B\rtimes H\ar[d]_-{\cong}^-{\nu_H}\ar[r] & C\rtimes H\ar[d]_-{\cong}^-{\nu_H} \\
        \Omega(\cR(C\rtimes G/H))\ar@{-->}[r] & \cR(A\rtimes G/H)\ar[r] & \cR(B\rtimes G/H)\ar[r] & \cR(C\rtimes G/H)
        }
    \]
    Define $\partial_{\cE\rtimes G/H}:\Omega(\cR(C\rtimes G/H))\to \cR(A\rtimes G/H)$ to be the dashed arrow that makes the left square commute. Then the bottom row becomes a triangle too. This triangle is clearly natural with respect to the extension $\cE$. These morphisms $\partial_{\cE\rtimes G/H}$ make $j\cR(-\rtimes G/H):\GAlgl\to\kk$ into an excisive homology theory.
\end{cons}

\begin{prop}\label{prop:existencertimesGH}
    Let $G$ be a group and let $H\subseteq G$ be a subgroup. Then there exists a unique triangulated functor $\overline{-\rtimes G/H}:\kk^G\to\kk$ making the following diagram commute:
    \[
        \xymatrix@C=6em{
        \GAlgl\ar[d]_-{j^G}\ar[r]^-{\cR(-\rtimes G/H)} & \Algl\ar[d]^-{j} \\
        \kk^G\ar[r]_-{\overline{-\rtimes G/H}} & \kk
        }
    \]
    Moreover, for every extension $\cE:A\to B\to C$ in $\GAlgl$ and every $uH\in G/H$, the following square in $\kk$ commutes:
    \begin{equation}\label{eq:squarepartial}\begin{gathered}
            \xymatrix@C=4em{
            \Omega(C\rtimes uHu^{-1})\ar[d]_-{\Omega(\nu_{uH})}^-{
            \scriptsize{{\begin{array}{@{}c@{}}
                            \cong \\
                            \eqref{eq:jrtimesH}
                        \end{array}}}
            }\ar[r]^-{\partial_{\cE \rtimes uHu^{-1}}} & A\rtimes uHu^{-1}\ar[d]^-{\nu_{uH}}_-{
            \scriptsize{{\begin{array}{@{}c@{}}
                            \cong \\
                            \eqref{eq:jrtimesH}
                        \end{array}}}
            } \\
            \Omega(\cR(C\rtimes G/H))\ar[r]^-{\partial_{\cE\rtimes G/H}} & \cR(A\rtimes G/H)
            }
        \end{gathered}\end{equation}
\end{prop}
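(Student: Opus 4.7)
The existence and uniqueness of $\overline{-\rtimes G/H}$ will follow from the universal property of $j^G:\GAlgl\to\kk^G$ \cite{euge}*{Theorem 4.1.1} applied to $F := j\cR(-\rtimes G/H):\GAlgl\to\kk$. I need to check that $F$ is homotopy invariant, $G$-stable and excisive. The first two properties are precisely Corollary \ref{coro:rtimesGHhtpyinv}. Excision is provided by Construction \ref{cons:partial}, which supplies boundary maps $\partial_{\cE\rtimes G/H}$ by transport along the natural isomorphism $\nu_H$ (the case $u=1$ of Lemma \ref{lem:natisocompoGH}) from the boundary maps of the excisive functor $j(-\rtimes H)$. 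The universal property then produces the unique triangulated factorization.

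For the moreover part, the square \eqref{eq:squarepartial} commutes tautologically when $uH = H$, since this is the defining property of $\partial_{\cE\rtimes G/H}$ in Construction \ref{cons:partial}. For a general coset $uH$, my plan is to exploit the naturality of boundary maps of $j:\Algl\to\kk$ with respect to morphisms of $\Algl$-extensions. Observe that $\nu_{uH}$ is represented by the honest algebra homomorphism $\iota_{uH}:B\rtimes uHu^{-1}\hookrightarrow \cR(B\rtimes G/H)$ that identifies $B\rtimes uHu^{-1}$ with the endomorphism algebra of the object $uH\in\ob(B\rtimes G/H)$, which is natural in $B$. Post-composing with $p:\cR(B\rtimes G/H)\to\cA(B\rtimes G/H)$ yields an algebra homomorphism landing in $\cA(B\rtimes G/H)=\bigoplus_{sH,tH}B\otimes\ell[tHs^{-1}]$. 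Applied to $\cE\rtimes G/H$, the target is a genuine $\Algl$-extension (being a direct sum of copies of $\cE$ tensored with free $\ell$-modules, which preserves split short exact sequences).

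Applying naturality of $\partial$ to the morphism of $\Algl$-extensions $(\cE\rtimes uHu^{-1})\to \cA(\cE\rtimes G/H)$ induced by $p\circ\iota_{uH}$ gives the identity
\[
\partial_{\cA(\cE\rtimes G/H)}\circ\Omega(j(p)\nu_{uH})=j(p)\nu_{uH}\circ\partial_{\cE\rtimes uHu^{-1}}
\]
in $\kk$. Pre- and post-composing with $j(p)^{-1}$ (a $\kk$-iso by Lemma \ref{lem:piso}), and using the $u=1$ instance of the same naturality identity to recognize that $j(p)^{-1}\circ\partial_{\cA(\cE\rtimes G/H)}\circ\Omega(j(p))$ equals $\partial_{\cE\rtimes G/H}$ as defined in Construction \ref{cons:partial}, one obtains the desired square \eqref{eq:squarepartial}. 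The main technical point is precisely this last identification: it is what ensures that Construction \ref{cons:partial}'s transport of the boundary via the specific choice $\nu_H$ is coherent with the ``intrinsic'' boundary map obtained from the $\Algl$-extension $\cA(\cE\rtimes G/H)$; once it is in hand, naturality with respect to $\iota_{uH}$ for arbitrary $u$ is essentially formal.
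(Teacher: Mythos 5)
Your existence-and-uniqueness argument matches the paper's exactly: Corollary~\ref{coro:rtimesGHhtpyinv} gives homotopy invariance and $G$-stability, Construction~\ref{cons:partial} equips $j\cR(-\rtimes G/H)$ with boundary maps making it excisive, and then \cite{euge}*{Theorem 4.1.1} produces the unique factorization.

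For the second part, your route is genuinely different from the paper's, and arguably cleaner. The paper introduces the algebra homomorphism $\varphi_u:D\rtimes uHu^{-1}\to D\rtimes H$, $\varphi_u(a\rtimes g)=(u^{-1}\cdot a)\rtimes u^{-1}gu$, applies naturality of $\partial$ to the resulting morphism of extensions $\cE\rtimes uHu^{-1}\to\cE\rtimes H$, and then reduces \eqref{eq:squarepartial} to the identity $j(\varphi_u)=(\nu_H)^{-1}\circ\nu_{uH}$. That identity is established by showing that $p\circ\iota_H\circ\varphi_u$ and $p\circ\iota_{uH}$ land in $\cA(D\rtimes G/H)$ as conjugates of one another by an explicit element $V\in\Gamma_{|G/H|}(\tilde D\rtimes G)$, invoking Lemma~\ref{lem:conju}. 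You bypass that computation entirely by observing that $\cA(\cE\rtimes G/H)$ is itself an $\Algl$-extension (a levelwise direct sum of $\cE$ tensored with free $\ell$-modules, which preserves linear splittings and exactness), and then pivoting: applying naturality of $\partial$ to $p\circ\iota_{uH}$ gives $\nu_{uH}\circ\partial_{\cE\rtimes uHu^{-1}}=j(p)^{-1}\circ\partial_{\cA(\cE\rtimes G/H)}\circ\Omega j(p)\circ\Omega\nu_{uH}$, and the $u=1$ case of the same identity identifies $j(p)^{-1}\circ\partial_{\cA(\cE\rtimes G/H)}\circ\Omega j(p)$ with the transported $\partial_{\cE\rtimes G/H}$ of Construction~\ref{cons:partial}. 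Both arguments are correct; yours avoids any explicit matrix conjugation (and the appeal to $M_2$-stability) at the cost of not exhibiting the nice auxiliary identity $j(\varphi_u)=\nu_H^{-1}\circ\nu_{uH}$ that the paper's proof produces along the way. You correctly flag that verifying $\cA(\cE\rtimes G/H)$ is a genuine extension is the load-bearing observation; the rest is formal naturality, as you say.
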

\begin{proof}
    The functor $j\cR(-\rtimes G/H):\GAlgl\to\kk$ is homotopy invariant and $G$-stable by Corollary \ref{coro:rtimesGHhtpyinv}. Moreover, endowed with the morphisms $\partial_{\cE\rtimes G/H}$ defined in Construction \ref{cons:partial}, it becomes an excisive homology theory. Then the existence of $\overline{-\rtimes G/H}:\kk^G\to\kk$ follows from \cite{euge}*{Theorem 4.1.1}.

    To prove the assertion about \eqref{eq:squarepartial}, consider the following diagram in $\kk$:
    \begin{equation}\label{eq:partialuH}\begin{gathered}
            \xymatrix@C=4em{\Omega(C\rtimes uHu^{-1})\ar[d]_-{\Omega(\nu_{uH})}^-{\cong}\ar[r]^-{\partial_{\cE \rtimes uHu^{-1}}} & A\rtimes uHu^{-1}\ar[d]^-{\nu_{uH}}_-{\cong} \\
            \Omega(\cR(C\rtimes G/H))\ar[r]^-{\partial_{\cE\rtimes G/H}}\ar[d]_-{\Omega(\nu_H)^{-1}}^-{\cong} & \cR(A\rtimes G/H)\ar[d]^-{(\nu_H)^{-1}}_-{\cong} \\
            \Omega(C\rtimes H)\ar[r]^-{\partial_{\cE\rtimes H}} & A\rtimes H}
        \end{gathered}\end{equation}
    The bottom square commutes by definition of $\partial_{\cE\rtimes G/H}$; see Construction \ref{cons:partial}. Thus, the commutativity of \eqref{eq:squarepartial} is equivalent to that of the outer square in \eqref{eq:partialuH}. We will see that the latter commutes since it is induced by a morphism of extensions in $\Algl$. For $D\in\GAlgl$, let $\varphi_u:D\rtimes uHu^{-1}\to D\rtimes H$ be the algebra homomorphism defined by $\varphi_u(a\rtimes g)=(u^{-1}\cdot a)\rtimes u^{-1}gu$. We have a morphism of extensions
    \[
        \xymatrix{A\rtimes uHu^{-1}\ar[d]^-{\varphi_u}\ar[r] & B\rtimes uHu^{-1}\ar[d]^-{\varphi_u}\ar[r] & C\rtimes uHu^{-1}\ar[d]^-{\varphi_u} \\
        A\rtimes H\ar[r] & B\rtimes H\ar[r] & C\rtimes H}
    \]
    that induces a morphism of triangles in $\kk$. In particular, there is a commutative square in $\kk$ as follows:
    \[\xymatrix@C=4em{\Omega(C\rtimes uHu^{-1})\ar[d]_-{\Omega j(\varphi_u)}\ar[r]^-{\partial_{\cE\rtimes uHu^{-1}}} & A\rtimes uHu^{-1}\ar[d]^-{j(\varphi_u)} \\
        \Omega(C\rtimes H)\ar[r]^-{\partial_{\cE\rtimes H}} & A\rtimes H}\]
    This square turns out to be the outer square of \eqref{eq:partialuH}. To see this, it suffices to show that $j(\varphi_u)=(\nu_H)^{-1}\circ \nu_{uH}$ or, equivalently, that
    \begin{equation}\label{eq:nuHnuuH}j(p)\circ\nu_H\circ j(\varphi_u)=j(p)\circ\nu_{uH}
    \end{equation}
    where $p:\cR\left(D\rtimes G/H\right)\longrightarrow\cA\left(D\rtimes G/H\right)$ is the morphism of Lemma \ref{lem:piso}. Each side of \eqref{eq:nuHnuuH} is induced by an algebra homomorphism $D\rtimes uHu^{-1}\to \cA\left(D\rtimes G/H\right)$. We will show that both morphisms are conjugate in $M_{|G/H|}(D\rtimes G)$, regarding $\cA\left(D\rtimes G/H\right)$ as a subalgebra of $M_{|G/H|}(D\rtimes G)$ with the inclusion of Example \ref{exa:subalgebraMGH}. A straightforward verification shows that the left- and the right-hand sides of \eqref{eq:nuHnuuH} are induced, respectively, by the algebra homomorphisms $\lambda$ and $\rho$ defined by:
    \begin{align*}
        \lambda(d\rtimes g) & =e_{H,H}\otimes (u^{-1}\cdot d)\rtimes u^{-1}gu \\
        \rho(d\rtimes g)    & =e_{uH, uH}\otimes d\rtimes g
    \end{align*}
    Now put:
    \[V=\sum_{vH\in G/H}e_{uvH, vH}\otimes 1\rtimes u \in\Gamma_{|G/H|}(\tilde{D}\rtimes G)\supset M_{|G/H|}(D\rtimes G) \]
    It is easily seen that $\rho=V\lambda V^{-1}$. This proves the equality \eqref{eq:nuHnuuH} and concludes the proof of the proposition.
\end{proof}

\begin{prop}
    Let $G$ be a group. Then every morphism $G/H\to G/K$ in $\OrG$ induces a natural transformation $\overline{-\rtimes G/H}\to \overline{-\rtimes G/K}$ of triangulated functors $\kk^G\to\kk$. Moreover, these assemble into a functor $-\rtimes -:\kk^G\times \OrG\to \kk$.
\end{prop}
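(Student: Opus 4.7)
The strategy is to first establish the natural transformations at the level of functors $\GAlgl\to\kk$, and then lift them to $\kk^G\to\kk$ via a 2-categorical enhancement of the universal property used in Proposition \ref{prop:existencertimesGH}.

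Fix $f:G/H\to G/K$ in $\OrG$. By Example \ref{exa:defiCrossedProduct}, the linear category $B\rtimes G/H$ is covariantly functorial in $G/H$, so $f$ induces a linear functor $B\rtimes G/H\to B\rtimes G/K$ natural in $B\in\GAlgl$. Applying $\cR:\Catl\to\Algl$ and then $j:\Algl\to\kk$ yields a natural transformation
\[
j\alpha_f:j\cR(-\rtimes G/H)\longrightarrow j\cR(-\rtimes G/K)
\]
of functors $\GAlgl\to\kk$, and the formation of $\alpha_f$ is itself functorial in $f$ by functoriality of $B\rtimes(-)$ and of $\cR$.

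To lift $j\alpha_f$ to a natural transformation $\overline f$ between the triangulated functors $\overline{-\rtimes G/H}$ and $\overline{-\rtimes G/K}$, I would invoke a natural-transformation version of \cite{euge}*{Theorem 4.1.1}: given a natural transformation $\eta$ between two excisive, homotopy invariant, $G$-stable functors $\GAlgl\to\tri$, there is a unique natural transformation of the induced triangulated functors $\kk^G\to\tri$ lifting $\eta$. The proof amounts to setting $\overline\eta_A:=\eta_A$ on each object $A$ (the objects of $\kk^G$ being those of $\GAlgl$) and checking naturality. Naturality for morphisms coming from $\GAlgl$ is immediate from the naturality of $\eta$. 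For the boundary maps $\partial_\cE$ attached to an extension $\cE:A\to B\to C$, compatibility
\[
\overline\eta_A\circ\partial_\cE^F=\partial_\cE^G\circ\Omega\overline\eta_C
\]
follows because $\eta$ supplies a morphism between the two candidate distinguished triangles in $\tri$ on the three non-$\partial$ edges, so the fourth edge agrees by the TR axioms. Applied to $\eta=j\alpha_f$, with $F=j\cR(-\rtimes G/H)$ and $G=j\cR(-\rtimes G/K)$ (both excisive by Corollary \ref{coro:rtimesGHhtpyinv} and Construction \ref{cons:partial}), this produces the required $\overline f:\overline{-\rtimes G/H}\Rightarrow\overline{-\rtimes G/K}$.

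Functoriality in $\OrG$ then follows from the uniqueness of the lift together with the equalities $\alpha_{g\circ f}=\alpha_g\circ\alpha_f$ and $\alpha_{\id}=\id$ established at the algebra level. Assembling the $\overline{-\rtimes G/H}$ with the transformations $\overline f$ yields the desired bifunctor $-\rtimes-:\kk^G\times\OrG\to\kk$. The main obstacle is the 2-categorical enhancement of \cite{euge}*{Theorem 4.1.1} sketched above; once this is in place, the rest of the argument is essentially formal.
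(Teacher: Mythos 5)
Your overall strategy — build the natural transformation at the level of $\GAlgl\to\kk$ and then lift it via the universal property of $\kk^G$ — is the same as the paper's, but two of your steps have real problems.

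The first and more serious gap is the claim that compatibility of $\eta$ with the boundary maps, namely $\eta_A\circ\partial_\cE^F=\partial_\cE^G\circ\Omega\eta_C$, "follows because $\eta$ supplies a morphism on the three non-$\partial$ edges, so the fourth edge agrees by the TR axioms." This is not a consequence of the axioms of a triangulated category. TR3 is an existence statement: given a commuting square on two entries of two distinguished triangles, \emph{some} third map completes it to a morphism of triangles, but that completion is not unique and need not coincide with the given $\Omega\eta_C$. In particular, the fact that $\eta_A,\eta_B,\eta_C$ make two of the three squares commute does not force the third to commute. This compatibility is precisely the nontrivial content of the proof and the paper establishes it by an explicit computation: the boundary maps $\partial_{\cE\rtimes G/H}$ were \emph{defined} (Construction \ref{cons:partial}) by transport along $\nu_H$ from $\partial_{\cE\rtimes H}$, and to show that the square \eqref{eq:deltafunc} commutes one decomposes $f_*=\nu_{uK}\circ j(\incl)\circ(\nu_H)^{-1}$ and then invokes the extra compatibility statement in Proposition \ref{prop:existencertimesGH} (the square \eqref{eq:squarepartial}) together with the naturality of boundary maps for the morphism of extensions $\cE\rtimes H\to\cE\rtimes uKu^{-1}$. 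None of this is a formal consequence of the triangulated structure, and your argument as stated would also "prove" false statements (for any two parallel excisive theories related by any objectwise map on two of the three legs of the triangle).

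The second issue is that the "natural-transformation version of \cite{euge}*{Theorem 4.1.1}" cannot be applied naively, because the natural transformation is encoded as a functor $\GAlgl\to\kk^I$ (with $I$ the interval category), and $\kk^I$ is \emph{not} a triangulated category in any obvious way, so the universal property of $\kk^G$ as a functor into triangulated categories does not cover it. This is why the paper develops the universal property of $\kk^G$ as the universal homotopy invariant $G$-stable $\delta$-functor into a \emph{graded} category (Theorem \ref{thm:molesto} in the appendix), of which $\kk^I$ is an example. Your sketch of the enhancement — "set $\overline\eta_A:=\eta_A$ on objects and check naturality" — also glosses over the fact that morphisms in $\kk^G$ are not morphisms of $\GAlgl$, so one cannot verify naturality pointwise; one really needs the universal property applied to the $\delta$-functor $\gamma_f:\GAlgl\to\kk^I$, and its hypotheses include exactly the boundary-compatibility you tried to get for free.
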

\begin{proof}
    Let $f:G/H\to G/K$ be a morphism in $\OrG$. Clearly, $f$ induces a natural transformation $j\cR(-\rtimes G/H)\to j\cR(-\rtimes G/K)$ of functors $\GAlgl\to\kk$. We will prove that this natural transformation is compatible with the excisive homology theory structures. More precisely, let $\cE:A\to B\to C$ be an extension in $\GAlgl$. We will prove that the following square in $\kk$ commutes:
    \begin{equation}\label{eq:deltafunc}\begin{gathered}
            \xymatrix@C=4em{\Omega(\cR(C\rtimes G/H))\ar[d]_-{f_*}\ar[r]^-{\partial_{\cE\rtimes G/H}} & \cR(A\rtimes G/H)\ar[d]^-{f_*} \\
            \Omega(\cR(C\rtimes G/K))\ar[r]^-{\partial_{\cE\rtimes G/K}} & \cR(A\rtimes G/K)}
        \end{gathered}
    \end{equation}
    Supose for a moment that this square commutes. Put $\scrA=\kk^I$ where $I$ is the interval category. Then $f$ induces a functor $\gamma_f:\GAlgl\to\scrA$ defined by:
    \[D\mapsto \left(f_*:\cR(D\rtimes G/H)\to \cR(D\rtimes  G/K) \right)\]
    The commutativity of \eqref{eq:deltafunc} implies that $\gamma_f$ is a homotopy invariant and $G$-stable $\delta$-functor in the sense of Definition \ref{def:deltafunctor}. Thus, it factors uniquely through $\kk^G$ by the universal property of $\kk^G$ stated in Theorem \ref{thm:molesto}:
    \[\xymatrix{\GAlgl\ar[r]^-{j^G}\ar@/_/[dr]_-{\gamma_f} & \kk^G\ar@{-->}[d]^-{\exists !\,\bar{\gamma}_f} \\
        & \scrA}\]
    The functor $\bar{\gamma}_f$ corresponds to the desired natural transformation.
    Let us now show that \eqref{eq:deltafunc} commutes. The morphism $f:G/H\to G/K$ is determined by $f(H)=uK$ for some $u\in G$ with $H\subseteq uKu^{-1}$. We have the following commutative square of linear categories:
    \[\xymatrix@C=4em{A\rtimes G/H\ar[r]^-{f_*} & A\rtimes G/K \\
        A\rtimes H\ar[u]\ar[r]^-{\incl} & A\rtimes uKu^{-1}\ar[u]}\]
    The bottom arrow is an inclusion of algebras. The left and right arrows are the inclusions of the full subcategories whose only objects are $H$ and $uK$, respectively. Upon applying $j\circ\cR:\Catl\to\kk$ the vertical arrows become isomorphisms and we get:
    \[f_*=\nu_{uK}\circ j(\incl)\circ (\nu_H)^{-1}:\cR(A\rtimes G/H)\longrightarrow \cR(A\rtimes G/K)\]
    Thus, the commutativity of \eqref{eq:deltafunc} is equivalent to that of the outer square in the following diagram in $\kk$:
    \[\xymatrix@C=4em{
        \Omega(\cR(C\rtimes G/H))\ar[d]_-{\Omega(\nu_H)^{-1}}^-{\cong}\ar[r]^-{\partial_{\cE\rtimes G/H}} & \cR(A\rtimes G/H)\ar[d]_-{\cong}^-{(\nu_H)^{-1}} \\
        \Omega(C\rtimes H)\ar[r]^-{\partial_{\cE\rtimes H}}\ar[d]_-{\Omega j(\incl)} & A\rtimes H\ar[d]^-{j(\incl)} \\
        \Omega(C\rtimes uKu^{-1})\ar[d]_-{\Omega(\nu_{uK})}^-{\cong}\ar[r]^-{\partial_{\cE\rtimes uKu^{-1}}} & A\rtimes uKu^{-1}\ar[d]^-{\nu_{uK}}_-{\cong} \\
        \Omega(\cR(C\rtimes G/K))\ar[r]^-{\partial_{\cE\rtimes G/K}} & \cR(A\rtimes G/K)
        }\]
    Here, the bottom and top squares commute by Proposition \ref{prop:existencertimesGH}. The middle square commutes because it fits into the morphism of triangles induced by the inclusion $H\subseteq uKu^{-1}$.
\end{proof}

\section{A natural adjunction}\label{sec:natadj}

Let $G$ be a group and let $H$ be a finite subgroup of $G$. Recall from Lemma \ref{lem:natisocompoGH} that there is a natural isomorphism $\nu_{H}:-\rtimes H\to \overline{-\rtimes G/H}$ of functors $\kk^G\to\kk$. If $|H|$ is invertible in $\ell$, we have isomorphisms
\begin{equation}\label{eq:natadjdefi}\kk^G(A^{(G/H)}, B)\cong \kk(A,B\rtimes H)\cong \kk(A, \cR(B\rtimes G/H)),\end{equation}
where the isomorphism on the right is induced by $\nu_H$ and the one on the left is that of Proposition \ref{prop:adj}. In other words, there is an adjunction:
\begin{equation}\label{eq:natadj}\begin{gathered}\begin{tikzcd}
            (-)^{(G/H)}:\kk \arrow[r,shift left]
            &
            \kk^G:\overline{-\rtimes G/H} \arrow[l,shift left]
        \end{tikzcd}\end{gathered}\end{equation}
We will show that this adjunction is natural in $G/H$. For the rest of this paper, we assume that $G$ satisfies the following property: \begin{equation}\label{eq:hipG}\text{$|H|$ is invertible in $\ell$ for every }H\in\FIN\end{equation}

We will prove that for every morphism $f:G/H\to G/K\in\OrGFin$ the following square commutes:
\begin{equation}\label{eq:natadjsq}\begin{gathered}\xymatrix{\kk(A,\cR(B\rtimes G/H))\ar[r]^-{\cong}\ar[d]_-{f_*} & \kk^G(A^{(G/H)}, B)\ar[d]^-{f_*} \\
        \kk(A, \cR(B\rtimes G/K))\ar[r]^-{\cong} & \kk^G(A^{(G/K)}, B)}\end{gathered}\end{equation}
The commutativity of this square is not obvious a priori since the middle term of \eqref{eq:natadjdefi} is not a functor on $\OrGFin$. Let $\alpha\in\kk(A, \cR(B\rtimes G/H))$ and $f:G/H\to G/K$ in $\OrGFin$. The commutativity of \eqref{eq:natadjsq} is equivalent to that of the outer square in the following diagram in $\kk$, where $\epsilon_{G/H}$ is the counit of \eqref{eq:natadj}:
\[
    \xymatrix@C=4em{
    A^{(G/H)}\ar[r]^-{\alpha^{(G/H)}} & \left[\cR(B\rtimes G/H)\right]^{(G/H)}\ar[r]^-{\epsilon_{G/H}} & B \\
    A^{(G/K)}\ar[u]^-{A^{(f)}}\ar[r]_-{\alpha^{(G/K)}} & \left[\cR(B\rtimes G/H)\right]^{(G/K)}\ar[u]_-{f^*}\ar[r]_-{f_*} & \left[\cR(B\rtimes G/K)\right]^{(G/K)}\ar[u]_-{\epsilon_{G/K}}}
\]
The square on the left clearly commutes. The commutativity of the square on the right follows easily from the following result.

\begin{lem}\label{lem:natcounit}
    Let $G$ be a group satisfying \eqref{eq:hipG}. Then:
    \begin{enumerate}
        \item\label{item:defizetaGH} For every $G/H\in\OrGFin$ there is a $G$-functor
              \begin{equation}\label{eq:defiZeta}
                  \zeta_{G/H}:(B\rtimes G/H)^{(G/H)}\to M_G\otimes B\end{equation}
              that sends $(b\otimes g)\chi_{sH}\in \Hom(uH,vH)=B\otimes\ell[vHu^{-1}]^{(G/H)}$ to:
              \[\sum_{p\in sHv^{-1}}e_{p,pg}\otimes (p\cdot b)\]
        \item\label{item:zetacompat} For any $f:G/H\to G/K$ in $\OrGFin$, the following square in $\GCatl$ commutes:
              \begin{equation}\label{eq:zetacompat}\begin{gathered}\xymatrix{(B\rtimes G/H)^{(G/H)}\ar[r]^-{\zeta_{G/H}} & M_G\otimes B\\
                      (B\rtimes G/H)^{(G/K)}\ar[u]_-{f^*}\ar[r]^-{f_*} & (B\rtimes G/K)^{(G/K)}\ar[u]_-{\zeta_{G/K}}}\end{gathered}\end{equation}
        \item\label{item:couniadj} The counit $\epsilon_{G/H}$ of the adjunction \eqref{eq:natadj} fits into the following commutative diagram in $\kk^G$:
              \[\xymatrix@C=5em{
                  \left[\cR\left(B\rtimes G/H\right)\right]^{(G/H)}\ar@/_1pc/[ddr]_-{\epsilon_{G/H}} & \cR\left[\left(B\rtimes G/H\right)^{(G/H)}\right]\ar[d]^-{\cR(\zeta_{G/H})}\ar[l]_-{\text{Cor. \ref{coro:rtensor}}}^-{\cong} \\
                  & M_G\otimes B\ar@{--}[d]^-{\eqref{eq:zigzag}}_-{\cong} \\
                  & B}
              \]
    \end{enumerate}
\end{lem}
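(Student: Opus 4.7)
I would verify directly from the formula that $\zeta_{G/H}$ is a well-defined linear functor that is $G$-equivariant. On composable generators $(b\otimes g)\chi_{sH}\in\Hom(uH,vH)$ and $(b'\otimes g')\chi_{s'H}\in\Hom(vH,wH)$, the composition in $(B\rtimes G/H)^{(G/H)}$ vanishes unless $sH=s'H$; when these match, the product in $M_G\otimes B$ of the two $\zeta_{G/H}$-images telescopes via the relation $e_{p',p'g'}e_{p,pg}=\delta_{p,p'g'}\,e_{p',pg}$ to exactly $\zeta_{G/H}$ of the composite. For equivariance, $G$ acts on $(B\rtimes G/H)^{(G/H)}$ by permuting only the $\chi$-factor, and the substitution $q=g'p$ in the summation yields $\zeta_{G/H}(g'\cdot x)=g'\cdot\zeta_{G/H}(x)$.

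\textbf{Part (2).} I would evaluate both composites in \eqref{eq:zetacompat} on a generator $(b\otimes g_0)\chi_{sK}\in(B\rtimes G/H)^{(G/K)}(xH,yH)$. Writing $f:G/H\to G/K$ as $f(gH)=guK$ with $H\subseteq uKu^{-1}$, one checks $f$ is proper so that $f^\ast\chi_{sK}=\sum_{gH\in f^{-1}(sK)}\chi_{gH}$. Applying $\zeta_{G/H}$ then produces a sum over $\bigsqcup_{gH\in f^{-1}(sK)}gHy^{-1}$, whereas $\zeta_{G/K}\circ f_\ast$ gives a sum over $sK(yu)^{-1}=sKu^{-1}y^{-1}$. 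The identity $sKu^{-1}H=sKu^{-1}$, equivalent to $H\subseteq uKu^{-1}$, implies $\bigsqcup_{gH\in f^{-1}(sK)}gH=sKu^{-1}$, so the two sums coincide.

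\textbf{Part (3).} This is the main technical point. Tracing the composed isomorphism \eqref{eq:natadjdefi} built from \eqref{eq:adj} and $\nu_H$, the counit of \eqref{eq:natadj} is
\[
\epsilon_{G/H}=\varepsilon_B\circ(\nu_H^{-1})^{(G/H)},
\]
where $\nu_H=j(\iota)$ for the inclusion $\iota:B\rtimes H\hookrightarrow\cR(B\rtimes G/H)$ of the full subcategory on the single object $H\in G/H$. Canceling the zig-zag isomorphism $M_G\otimes B\cong B$ on both sides of the claimed triangle and precomposing with $\iota^{(G/H)}=(\nu_H)^{(G/H)}$, the desired equality reduces to
\[
\counit_B=\cR(\zeta_{G/H})\circ\sigma^{-1}\circ\iota^{(G/H)}
\]
in $\kk^G((B\rtimes H)^{(G/H)},M_G\otimes B)$, where $\sigma$ is the iso from Corollary \ref{coro:rtensor}. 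To sidestep the need to invert $\sigma$ inside $\kk^G$, I would introduce an auxiliary $G$-algebra homomorphism $\tilde\iota:(B\rtimes H)^{(G/H)}\to\cR[(B\rtimes G/H)^{(G/H)}]$ sending $(b\rtimes h)\chi_{wH}$ to the class of the generator $(b\otimes h)\chi_{wH}\in(B\rtimes G/H)^{(G/H)}(H,H)$. A quick check on generators shows $\sigma\circ\tilde\iota=\iota^{(G/H)}$, whence $\sigma^{-1}\circ\iota^{(G/H)}=\tilde\iota$ in $\kk^G$. A direct computation on the generator $(b\rtimes h)\chi_{wH}$ then yields
\[
\cR(\zeta_{G/H})(\tilde\iota((b\rtimes h)\chi_{wH}))=\zeta_{G/H}((b\otimes h)\chi_{wH})=\sum_{p\in wH}e_{p,ph}\otimes(p\cdot b)=\counit_B((b\rtimes h)\chi_{wH}),
\]
where the last equality is \eqref{eq:defipsi}. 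This closes the proof.

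The main obstacle is the bookkeeping in part (3): extracting $\epsilon_{G/H}$ concretely from the composed adjunction and realizing $\sigma^{-1}\circ\iota^{(G/H)}$ by a single $G$-algebra homomorphism $\tilde\iota$, so that the comparison with $\counit_B$ collapses to a formula check on generators. Once this scaffolding is in place, parts (1) and (2) are routine verifications based on the formula for $\zeta_{G/H}$ and the combinatorial identity $sKu^{-1}H=sKu^{-1}$.
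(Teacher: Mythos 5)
Your proposal is correct and follows essentially the same approach as the paper's proof. In particular, the auxiliary $G$-algebra homomorphism $\tilde\iota:(B\rtimes H)^{(G/H)}\to\cR[(B\rtimes G/H)^{(G/H)}]$ that you introduce in part (3) to avoid inverting $\sigma$ inside $\kk^G$ is exactly the ``diagonal morphism'' appearing in the paper's commutative square, and your generator-level checks in parts (1) and (2)---the telescoping matrix-unit relation and the combinatorial identity $sKu^{-1}H=sKu^{-1}$---are the same computations the paper carries out.
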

\begin{proof}
    Let us verify that $\zeta_{G/H}$ as defined in \eqref{item:defizetaGH} is compatible with composition. Let $f_1=(b_1\otimes g_1)\chi_{sH}\in \Hom(uH, vH)$ and $f_2=(b_2\otimes g_2)\chi_{tH}\in \Hom(vH, wH)$. We have
    \[f_2\circ f_1=\delta_{sH,tH} (b_2(g_2\cdot b_1)\otimes g_2g_1)\chi_{tH}\]
    where $\delta_{sH,tH}$ is Kronecker's delta. Then:
    \[\zeta_{G/H}(f_2\circ f_1)=\delta_{sH,tH}\sum_{p\in tHw^{-1}}e_{p,pg_2g_1}\otimes p\cdot (b_2(g_2\cdot b_1))\]
    On the other hand, we have:
    \begin{align*}\zeta_{G/H}(f_2)\zeta_{G/H}(f_1) & =\left(\sum_{q\in tHw^{-1}}e_{q, qg_2}\otimes (q\cdot b_2)\right)\left(\sum_{p\in sHv^{-1}}e_{p, pg_1}\otimes (p\cdot b_1)\right) \\
                                               & =\sum_{\substack{p\in sHv^{-1}                                                                                                    \\ q\in tHw^{-1}}}e_{q,qg_2}e_{p,pg_1}\otimes (q\cdot b_2)(p\cdot b_1)\\
                                               & =\delta_{sH,tH}\sum_{q\in tHw^{-1}}e_{q,qg_2g_1}\otimes (q\cdot b_2)((qg_2)\cdot b_1)\end{align*}
    The appearance of Kronecker's delta in the last line is explained as follows: $qg_2\in tHv^{-1}$ and $p\in sHv^{-1}$ can be equal if and only if $sH=tH$. This shows that $\zeta_{G/H}$ is indeed a well-defined functor.

    Let us prove \eqref{item:zetacompat}. The square commutes on objects since $M_G\otimes B$ has only one object. A morphism $f:G/H\to G/K$ is determined by $f(H)=xK$ with $x$ such that $H\subseteq xKx^{-1}$. Let $(b\otimes g)\chi_{sK}\in \Hom(uH,vH)$. We have:
    \begin{align*}
        \zeta_{G/K}\left(f_*\left((b\otimes g)\chi_{sK}\right)\right) & =\zeta_{G/K}\left((b\otimes g)\chi_{sK}\right)     \\
                                                                      & =\sum_{p\in sK(vx)^{-1}}e_{p,pg}\otimes (p\cdot b)
    \end{align*}
    On the other hand, we have:
    \begin{align*}
        \zeta_{G/H}\left(f^*\left((b\otimes g)\chi_{sK}\right)\right) & =\zeta_{G/H}\left(\sum_{tH\in f^{-1}(sK)}(b\otimes g)\chi_{tH}\right)  \\
                                                                      & =\sum_{tH\in f^{-1}(sK)}\sum_{p\in tHv^{-1}}e_{p,pg}\otimes (p\cdot b) \\
                                                                      & =\sum_{p\in sKx^{-1}v^{-1}}e_{p,pg}\otimes (p\cdot b)
    \end{align*}
    Here, the last equality follows from the fact that $sKx^{-1}$ is the disjoint union of $f^{-1}(sK)$. This finishes the proof of \eqref{item:zetacompat}.

    Let us prove \eqref{item:couniadj}. It follows from \eqref{eq:natadjdefi} that the counit $\epsilon_{G/H}$ of the adjunction \eqref{eq:natadj} equals $\varepsilon_H\circ \left[(\nu_H)^{-1}\right]^{(G/H)}$, where $\varepsilon_H$ is the counit of \eqref{eq:adjnnat}. Recall the definition of $\varepsilon_H$ from \eqref{eq:couniH}. Then $\epsilon_{G/H}$ equals the following composite in $\kk$:
    \[\xymatrix@C=4.3em{\left[\cR(B\rtimes G/H)\right]^{(G/H)}\ar[r]^-{\left[(\nu_H)^{-1}\right]^{(G/H)}} & (B\rtimes H)^{(G/H)}\ar[r]^-{\counit_B}_-{\text{\eqref{eq:defipsi}}} & M_G\otimes B\underset{\eqref{eq:zigzag}}{\cong} B}\]
    The statement in \eqref{item:couniadj} now follows from the commutativity of the following diagram of $G$-algebras:
    \[\xymatrix@C=5em{\left[\cR(B\rtimes G/H)\right]^{(G/H)} & \cR\left[(B\rtimes G/H)^{(G/H)}\right]\ar[l]_-{\text{Cor. \ref{coro:rtensor}}}\ar[d]^-{\cR(\zeta_{G/H})} \\
        (B\rtimes H)^{(G/H)}\ar[r]^-{\counit_B}\ar[u]^-{(\nu_H)^{(G/H)}}\ar[ur] & M_G\otimes B}\]
    Here the diagonal morphism is the one induced by the inclusion
    \[(B\rtimes H)^{(G/H)}\subseteq (B\rtimes G/H)^{(G/H)}\]
    as the full subcategory on the object $H$. This finishes the proof.
\end{proof}

We collect the main results of this section in the following theorem.

\begin{thm}\label{thm:adj}
    Let $G$ be a group satisfying \eqref{eq:hipG} and let $H\subseteq G$ be a finite subgroup. Then we have an adjunction:
    \[\begin{tikzcd}
            (-)^{(G/H)}:\kk \arrow[r,shift left]
            &
            \kk^G:\overline{-\rtimes G/H} \arrow[l,shift left]
        \end{tikzcd}\]
    Moreover, the adjunction isomorphism
    \[\kk^G(A^{(G/H)}, B)\cong \kk(A, \cR(B\rtimes G/H))\]
    is natural in $G/H\in\OrGFin$ and the counit of the adjunction is described by Lemma \ref{lem:natcounit}.
\end{thm}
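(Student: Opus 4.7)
The plan is to assemble the pieces already established in this section. For each fixed finite $H \subseteq G$ satisfying \eqref{eq:hipG}, the adjunction follows immediately by composing the adjunction isomorphism of Proposition \ref{prop:adj} with the natural isomorphism $\nu_H : j(-\rtimes H) \xrightarrow{\sim} j\cR(-\rtimes G/H)$ of Lemma \ref{lem:natisocompoGH}:
\[\kk^G(A^{(G/H)}, B)\;\cong\;\kk(A, B\rtimes H)\;\cong\;\kk(A, \cR(B\rtimes G/H)).\]
Since the triangulated functor $\overline{-\rtimes G/H}$ is characterized by $\overline{-\rtimes G/H}\circ j^G = j\cR(-\rtimes G/H)$ (Proposition \ref{prop:existencertimesGH}), this exhibits the displayed adjunction, with unit inherited from Proposition \ref{prop:adj} and counit $\epsilon_{G/H} := \varepsilon_H \circ [(\nu_H)^{-1}]^{(G/H)}$, agreeing with the description of Lemma \ref{lem:natcounit}\eqref{item:couniadj}.

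The only nontrivial point is naturality in $G/H$. Given $f:G/H \to G/K$ in $\OrGFin$, one must show that the square \eqref{eq:natadjsq} commutes. As already outlined in the paragraph preceding Lemma \ref{lem:natcounit}, transporting an element $\alpha\in\kk(A,\cR(B\rtimes G/H))$ along both paths and comparing, the commutativity of \eqref{eq:natadjsq} is equivalent to that of the outer rectangle formed by $A^{(-)}$ and the counits $\epsilon_{(-)}$. The left square of that rectangle commutes trivially by functoriality of $A^{(-)}$; the content lies in the right square, namely
\[\xymatrix@C=3em{\left[\cR(B\rtimes G/H)\right]^{(G/H)}\ar[r]^-{\epsilon_{G/H}} & B\\
\left[\cR(B\rtimes G/H)\right]^{(G/K)}\ar[u]^-{f^*}\ar[r]_-{f_*} & \left[\cR(B\rtimes G/K)\right]^{(G/K)}\ar[u]_-{\epsilon_{G/K}}}\]
in $\kk^G$.

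The strategy here is to factor both counits through $\cR(\zeta_{-})$ using part \eqref{item:couniadj} of Lemma \ref{lem:natcounit}. After unpacking, the commutativity of the right square reduces to two things: first, the compatibility
\[\cR(\zeta_{G/K})\circ f_* \;=\; \cR(\zeta_{G/H})\circ f^*\]
obtained by applying $\cR$ to the square of Lemma \ref{lem:natcounit}\eqref{item:zetacompat}; and second, the naturality of the isomorphism of Corollary \ref{coro:rtensor} with respect to $f^*$ on the $(-)^{(G/-)}$ factor and with respect to $f_*$ on the crossed-product factor, which is straightforward from the explicit formula for that isomorphism in Example \ref{exa:rtensor}. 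Concatenating these diagrams closes the argument, and the description of the counit asserted in the theorem is exactly Lemma \ref{lem:natcounit}\eqref{item:couniadj}.

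The main obstacle was already dealt with in Lemma \ref{lem:natcounit}: the counit produced by Proposition \ref{prop:adj} is built one subgroup at a time, so naturality in $\OrGFin$ is not visible at that level of detail. What is needed is an \emph{explicit} $G$-equivariant $G$-functor $\zeta_{G/H}$ realizing the counit and manifestly compatible with morphisms $f:G/H\to G/K$; once $\zeta$ is in hand, the proof of the theorem is pure diagram chasing.
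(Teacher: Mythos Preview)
Your proposal is correct and follows essentially the same route as the paper: the adjunction is the composite \eqref{eq:natadjdefi}, and naturality in $G/H$ is reduced to the right-hand square involving the counits, which is closed using Lemma \ref{lem:natcounit}\eqref{item:zetacompat} and \eqref{item:couniadj} together with the naturality of the morphism of Corollary \ref{coro:rtensor}. The paper in fact presents Theorem \ref{thm:adj} merely as a summary (``We collect the main results of this section in the following theorem''), with the argument you have written out appearing in the discussion preceding Lemma \ref{lem:natcounit}.
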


\section{Lifting the adjunction to spectra}\label{sec:bomba}

\subsection{The primitive zig-zag}\label{sec:zzagp}

Let $A\in\Algl$, $B\in\GAlgl$ and $G/N\in\OrGFin$. By Theorem \ref{thm:adj} we have an adjunction isomorphism
\begin{equation}\label{eq:natAdjGroups}\kk^G(A^{(G/N)}, B)\cong \kk(A, \cR(B\rtimes G/N))\end{equation}
that is natural in $G/N$.
Let $\bbK$ and $\bbK^G$ be, respectively, the spectra representing $\kk$-theory and $\kk^G$-theory. These were defined in \cite{garku} and in \cite{tesisema}; see Section \ref{sec:sptkkg} for details. We would like to lift the isomorphism \eqref{eq:natAdjGroups} to a natural weak equivalence of spectra:
\[\xymatrix{ \bbK(A, \cR(B\rtimes G/N))\ar@{-->}[r] & \bbK^G(A^{(G/N)}, B)}
\]
Here, we want the dashed arrow to represent a zig-zag of $\OrGFin$-spectra inducing the isomorphism \eqref{eq:natAdjGroups} upon taking homotopy groups. As a starting point, let us recall how to obtain this adjunction. By Lemma \ref{lem:natcounit} \eqref{item:couniadj}, the isomorphism \eqref{eq:natAdjGroups} equals the following composite:
\begin{equation}\label{eq:decompAdj}\begin{gathered}\xymatrix@C=3.5em{
        \kk(A, \cR(B\rtimes G/N))\ar[r]^-{(-)^{(G/N)}} & \kk^G(A^{(G/N)}, \left[\cR(B\rtimes G/N)\right]^{(G/N)})\\
        \kk^G(A^{(G/N)}, M_{G}B)\ar[d]_-{\rotatebox{90}{$\scriptstyle\cong$}} & \kk^G(A^{(G/N)}, \cR\left[(B\rtimes G/N)^{(G/N)}\right])\ar[u]^-{\text{Cor. \ref{coro:rtensor}}}_-{\rotatebox{-90}{$\scriptstyle\cong$}}\ar[l]_-{\cR(\zeta_{G/N})}^-{\eqref{eq:defiZeta}} \\
        \kk^G(A^{(G/N)}, M_{G_\plus}B) & \kk^G(A^{(G/N)}, B)\ar[l]_-{\cong}\\
        }\end{gathered}\end{equation}
The last two morphisms are easily lifted to spectra. Indeed, the $G$-stability zig-zag \eqref{eq:zigzag} induces a zig-zag of weak equivalences that is clearly natural in $G/N$:
\[\xymatrix{
    \bbK^G(A^{(G/N)}, M_{G}B)\ar[r]^-{\sim} & \bbK^G(A^{(G/N)}, M_{G_\plus}B) & \bbK^G(A^{(G/N)}, B)\ar[l]_-{\sim}
    }\]
Lifting the rest of \eqref{eq:decompAdj} is somewhat more delicate. If we simply replace groups by spectra, we get:
\begin{equation}\label{eq:zzagRecortado}
    \begin{gathered}\xymatrix@C=3.5em{
        \bbK(A, \cR(B\rtimes G/N))\ar[r]^-{(-)^{(G/N)}} & \bbK^G(A^{(G/N)}, \left[\cR(B\rtimes G/N)\right]^{(G/N)})\\
        \bbK^G(A^{(G/N)}, M_{G}B) & \bbK^G(A^{(G/N)}, \cR\left[(B\rtimes G/N)^{(G/N)}\right])\ar[u]^-{\text{Cor. \ref{coro:rtensor}}}_-{\rotatebox{90}{$\scriptstyle\sim$}}\ar[l]_-{\cR(\zeta_{G/N})}^-{\eqref{eq:defiZeta}}}\end{gathered}\end{equation}
This is what we call the \emph{primitive zig-zag}. While the spectra on the left are covariant functors on $\OrGFin$, this is not the case for those on the right.
We should start by replacing the latter by covariant functors on $\OrGFin$ if we expect a zig-zag that is natural in $G/N$. In the following sections---taking the primitive zig-zag as our model---we proceed to construct a zig-zag of spectra that depends covariantly on $G/N$ and that induces the isomorphism \eqref{eq:natAdjGroups} upon taking homotopy groups.

\subsection{Notation and preliminary definitions}\label{sec:notpreli}
To ease notation, the category $\OrGFin$ will be denoted by $\cO$ for the rest of this section. Its objects---the orbit spaces corresponding to finite subgroups of $G$---will be denoted by letters $r$, $s$ and $t$.

Let $\cC$ be a category and $\Gamma$ be a small category. We will write $\B(\Gamma, \cC)$ for the category $\cC^{\Gamma^\op\times\Gamma}$ of bifunctors $\Gamma^\op\times\Gamma\to\cC$. Let $f:\Gamma\to \Lambda$ be a functor between small categories. Then we can restrict along $f$ either of the variables of a bifunctor $\Lambda^\op\times\Lambda\to\cC$, or both of them, as shown by the following commutative diagram of categories:
\[\xymatrix{
    \B(\Lambda,\cC)\ar[r]^-{f^*}\ar[d]_-{f^*}\ar[dr]^-{f^\star} & \cC^{\Lambda^\op\times\Gamma}\ar[d]^-{f^*} \\
    \cC^{\Gamma^\op\times\Lambda}\ar[r]^-{f^*} &
    \B(\Gamma,\cC)
    }\]
Here, $f^*$ denotes restriction of one of the variables (either the covariant or the contravariant one) and $f^\star$ denotes restriction of both variables.

Define functors $J\in\Spt^\cO$ and $\Iuno, \Idos\in \B(\cO,\Spt)^\cO$ by
\begin{align}
    J(t)         & :=\bbK(A, \cR(B\rtimes t)) \nonumber                                \\
    \Idos_t(s,r) & :=\bbK^G(A^{(t)}, \cR[(B\rtimes r)^{(s)}]) \label{eq:defisFuntores} \\
    \Iuno_t(s,r) & :=\bbK^G(A^{(t)}, [\cR(B\rtimes r)]^{(s)}) \nonumber
\end{align}
for $r,s,t\in\cO$. The $\kk^G$-equivalence $\cR[(B\rtimes r)^{(s)}]\to \left[\cR(B\rtimes r)\right]^{(s)}$ of Corollary \ref{coro:rtensor} induces, upon applying $\bbK^G(A^{(t)}, -)$, a natural transformation $\psi:\Idos\to \Iuno$ that is an objecwise weak equivalence of spectra. With this notation, the primitive zig-zag \eqref{eq:zzagRecortado} becomes:
\begin{equation}\label{eq:zzagPrimit}\xymatrix{
    J(t) \ar[r]^-{(-)^{(t)}} &
    \Iuno_t(t, t) & \Idos_t(t,t)\ar[l]^-{\sim}_-{\psi}\ar[r]^-{\cR(\zeta_t)} &
    \bbK^G(A^{(t)}, M_G B)
    }\end{equation}

\subsection{Coends enter the game}\label{sec:coend}
Fix $t\in\cO$. The commutativity of \eqref{eq:zetacompat} suggests that the morphism induced by $\cR(\zeta_t)$ in \eqref{eq:zzagPrimit} could be replaced by a morphism from a certain coend, as we proceed to explain. If $f:r\to s$ is a morphism in $\cO$, the following square commutes by Lemma \ref{lem:natcounit} \eqref{item:zetacompat}:
\[
    \xymatrix{(B\rtimes r)^{(r)}\ar[r]^-{\zeta_r} & M_G\otimes B\\
    (B\rtimes r)^{(s)}\ar[u]_-{f^*}\ar[r]^-{f_*} & (B\rtimes s)^{(s)}\ar[u]_-{\zeta_{s}}}
\]
Upon applying $\bbK^G(A^{(t)}, \cR(-))$, we get a commutative diagram:
\begin{equation}\label{eq:sqZetaK1}\begin{gathered}\xymatrix{
        \Idos_t(r,r)\ar[r]^-{\zeta_r} & \bbK^G(A^{(t)}, M_GB)\\
        \Idos_t(s,r)\ar[u]^-{f^*}\ar[r]^-{f_*} & \Idos_t(s,s)\ar[u]_-{\zeta_s}
        }\end{gathered}\end{equation}
For reasons that will become clear later on (see Remark \ref{rem:choiceOfSlice1} and Lemma \ref{lem:nat:isofun}) we will take coends over the slice category $\slice{\cO}{t}$ of orbit spaces over $t$. We will denote by $\ForgetfulFunctor_t$ the forgetful functor $\slice{\cO}{t}\to \cO$. Let now $f:\alpha\to\beta$ be a morphism in $\slice{\cO}{t}$, where $\alpha:r\to t$ and $\beta:s\to t$. Then \eqref{eq:sqZetaK1} equals the square:
\[\xymatrix{
    \left[\Ustar{t} \Idos_t \right](\alpha, \alpha)\ar[r]^-{\zeta_r} & \bbK^G(A^{(t)}, M_GB)\\
    \left[\Ustar{t} \Idos_t \right](\beta, \alpha)\ar[u]^-{f^\star }\ar[r]^-{f_*} &
    \left[\Ustar{t} \Idos_t \right](\beta, \beta)\ar[u]_-{\zeta_s}
    }\]
By the universal property of the coend, there is a unique morphism $\zeta$ making the following triangle commute for all objects $\alpha:r\to t$ of $\slice{\cO}{t}$:
\begin{equation}\label{eq:defiZetaNat}\begin{gathered}\xymatrix{
        \intg{t}\Ustar{t} \Idos_t \ar[r]^-{\zeta} & \bbK^G(A^{(t)}, M_GB) \\
        \left[\Ustar{t} \Idos_t \right](\alpha, \alpha)\ar[u]^-{\can_\alpha}\ar@/_1.5pc/[ur]_-{\zeta_r}
        }\end{gathered}\end{equation}
Here the vertical morphism is the structural morphism into the coend corresponding to $\alpha$.
In the next section we will prove that $\zeta$ depends covariantly on $t\in\cO$.

\subsection{Defining \texorpdfstring{$\cO$}{O}-spectra as objectwise coends} \label{sec:coend2} Let us show that the morphisms $\zeta$ of \eqref{eq:defiZetaNat} assemble, for varying $t$, into a morphism of $\cO$-spectra. We first prove some preliminary lemmas.

\begin{lem}\label{lem:natcoend}
    Let $\cC$ be a cocomplete category, $f:\Gamma\to\Lambda$ be a functor between small categories and $T\in\B(\Lambda ,\cC)$. Then there is a unique morphism
    \begin{equation}\label{eq:natcoend}\xymatrix{\int^{\Gamma} f^\star T\ar[r] & \int^{\Lambda}  T}\end{equation}
    making the following square in $\cC$ commute, for every object $\gamma$ of $\Gamma$:
    \[\xymatrix{
        \int^{\Gamma} f^\star T\ar[r] & \int^{\Lambda}  T \\
        (f^\star T)(\gamma, \gamma)\ar@{=}[r]\ar[u]^-{\can_{\gamma}} & T(f(\gamma), f(\gamma))\ar[u]_-{\can_{f(\gamma)}}\ar[u]
        }\]
    Here the vertical arrows are the structural morphisms into the coends. Moreover, \eqref{eq:natcoend} is natural in $T$.
\end{lem}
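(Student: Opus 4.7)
The plan is to invoke the universal property of the coend $\int^{\Gamma} f^\star T$: giving a morphism out of it is the same as giving a cowedge over $f^\star T$. So I will construct the morphism \eqref{eq:natcoend} by exhibiting the family
\[\bigl\{\can_{f(\gamma)}: T(f(\gamma),f(\gamma)) = (f^\star T)(\gamma,\gamma) \longrightarrow \int^{\Lambda} T\bigr\}_{\gamma\in\ob\Gamma}\]
as a cowedge over $f^\star T$, where $\can_\lambda: T(\lambda,\lambda)\to\int^\Lambda T$ are the structural morphisms of the target coend.

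The only thing to check is the cowedge condition: for every morphism $g:\gamma\to\gamma'$ in $\Gamma$, the square
\[\begin{tikzcd}[column sep=4em]
(f^\star T)(\gamma',\gamma) \ar[r,"{(\id,g)_*}"] \ar[d,"{(g,\id)_*}"'] & (f^\star T)(\gamma,\gamma) \ar[d,"\can_{f(\gamma)}"] \\
(f^\star T)(\gamma',\gamma') \ar[r,"\can_{f(\gamma')}"'] & \int^{\Lambda} T
\end{tikzcd}\]
commutes. Unravelling $(f^\star T)(\gamma',\gamma) = T(f(\gamma'),f(\gamma))$ and similarly for the other entries, this square is exactly the cowedge condition satisfied by $\can$ over $T$ applied to the morphism $f(g):f(\gamma)\to f(\gamma')$ in $\Lambda$. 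Hence the family is a cowedge, and the universal property of $\int^{\Gamma} f^\star T$ produces a unique morphism \eqref{eq:natcoend} making the prescribed triangles with the structural maps $\can_\gamma$ commute.

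For naturality in $T$, let $\eta:T\to T'$ be a morphism in $\B(\Lambda,\cC)$. Both composites
\[\int^{\Gamma} f^\star T \longrightarrow \int^{\Lambda} T \xrightarrow{\,\int^\Lambda \eta\,} \int^{\Lambda} T'\qquad\text{and}\qquad \int^{\Gamma} f^\star T \xrightarrow{\,\int^\Gamma f^\star\eta\,} \int^{\Gamma} f^\star T' \longrightarrow \int^{\Lambda} T'\]
are morphisms $\int^{\Gamma} f^\star T \to \int^{\Lambda} T'$, and a direct diagram chase shows that both induce on the cowedge summand $(f^\star T)(\gamma,\gamma)$ the morphism $\can_{f(\gamma)}\circ\eta_{f(\gamma),f(\gamma)}$. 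By the uniqueness clause of the universal property they coincide, which is precisely naturality of \eqref{eq:natcoend} in $T$.

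There is no serious obstacle here; the statement is a formal consequence of the universal property of coends. The only potential nuisance is keeping the variances straight when passing between the bifunctors $T$, $f^\star T$ and the cowedge conditions — in particular, verifying that the naming $(f^\star T)(\gamma,\gamma) = T(f(\gamma),f(\gamma))$ respects the cowedge data on both sides — but this is entirely bookkeeping.
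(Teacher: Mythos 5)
Your proof is correct and matches the paper's approach: the paper simply says the lemma is immediate from the universal property of coends, and you have spelled out exactly that cowedge verification (including the naturality check). There is a minor labeling slip in your cowedge square---the arrow $(f^\star T)(\gamma',\gamma)\to (f^\star T)(\gamma,\gamma)$ acts on the \emph{contravariant} first slot and should be labeled $(g,\id)^*$ rather than $(\id,g)_*$, and dually for the other arrow---but the sources and targets are right and the argument is sound.
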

\begin{proof}
    This is immediate from the universal property of coends.
\end{proof}

\begin{rem}\label{rem:natcoend2}
    For composable functors $\Gamma\xrightarrow{f}\Lambda\xrightarrow{g}\Sigma$ and $T\in\B(\Sigma,\cC)$, the morphisms \eqref{eq:natcoend} clearly fit into the following commutative triangle:
    \[\xymatrix{
            \int^\Gamma (g\circ f)^\star T=\int^\Gamma f^\star g^\star T\ar[r] & \int^\Lambda g^\star T\ar[d] \\
            & \int^\Sigma T
            \ar@/_1pc/ "1,1"+<-2em,-0.8em>;"2,2"
        }\]
\end{rem}

\begin{lem}\label{lem:nat}
    Let $\cC$ be a cocomplete category and $\Gamma$ be a small category. Then there is a functor $\funcoend:\B(\Gamma,\cC)^{\Gamma}\to \cC^\Gamma$ described as follows.
    \begin{enumerate}
        \item\label{item:lemnat1} Let $V$ be an object of $\B(\Gamma,\cC)^{\Gamma}$, $t\in\Gamma\mapsto V_t\in \B(\Gamma,\cC)$. For $t\in\Gamma$, we have:
              \[\funcoend(V)(t)=\textstyle\int^{\slice{\Gamma}{t}} \Ustar{t}V_t\]
              For a morphism $f:t\to t'$ in $\Gamma$, $\funcoend(V)(f)$ equals the composite:
              \[\xymatrix@C=1.7em{
                  \int^{\slice{\Gamma}{t}} \Ustar{t}V_t \ar[r]^-{V_f} &
                  \int^{\slice{\Gamma}{t}} \Ustar{t}V_{t'}=\int^{\slice{\Gamma}{t}} f^\star\Ustar{t'}V_{t'} \ar[r]^-{\eqref{eq:natcoend}} & \int^{\slice{\Gamma}{t'}}\Ustar{t'}V_{t'}}\]
        \item For a morphism $h:V\to W$ in $\B(\Gamma,\cC)^\Gamma$, the natural transformation $\funcoend(h)$ has components:
              \[\xymatrix@C=6em{\int^{\slice{\Gamma}{t}}\Ustar{t}V_t\ar[r]^-{\int^{\slice{\Gamma}{t}}\Ustar{t}h_t} & \int^{\slice{\Gamma}{t}}\Ustar{t}W_t}\]
    \end{enumerate}
\end{lem}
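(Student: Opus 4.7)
The plan is to check three things implicit in the statement: that for each $V$ the assignment $\funcoend(V)$ is a functor $\Gamma \to \cC$; that for each morphism $h:V\to W$ in $\B(\Gamma,\cC)^\Gamma$ the components $\funcoend(h)_t$ assemble into a natural transformation; and finally that $\funcoend$ itself respects identities and composition.

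For the first point, preservation of identities is immediate since $V_{\id_t}=\id_{V_t}$ and the comparison map of Lemma \ref{lem:natcoend} attached to the identity functor on $\slice{\Gamma}{t}$ is the identity on the coend. For composition $t\xrightarrow{f}t'\xrightarrow{g}t''$, I would expand each of $\funcoend(V)(g)\circ\funcoend(V)(f)$ and $\funcoend(V)(gf)$ as a composite of coend comparison maps and maps induced by $V_f$, $V_g$, $V_{gf}$. Two ingredients are then combined: functoriality $V_{gf}=V_g\circ V_f$ in $\B(\Gamma,\cC)^\Gamma$, which allows factoring the $V$-part; and the compatibility of coend comparisons from Remark \ref{rem:natcoend2} applied to the composable functors $\slice{\Gamma}{t}\xrightarrow{f}\slice{\Gamma}{t'}\xrightarrow{g}\slice{\Gamma}{t''}$, using the identifications $\Ustar{t}V_{t''} = f^\star \Ustar{t'}V_{t''}$ and $\Ustar{t}V_{t''} = (gf)^\star \Ustar{t''}V_{t''}$. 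Naturality of \eqref{eq:natcoend} in its bifunctor argument, also supplied by Lemma \ref{lem:natcoend}, is what glues the two ingredients together.

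For the second point, given $h:V\to W$ and $f:t\to t'$, the naturality square for $\funcoend(h)$ splits as a vertical pasting of two subsquares matching the two stages of the definition of $\funcoend(V)(f)$. The top subsquare lives in $\cC^{\slice{\Gamma}{t}^{\op}\times \slice{\Gamma}{t}}$ and commutes because $h$, being a morphism in $\B(\Gamma,\cC)^\Gamma$, satisfies $W_f\circ h_t=h_{t'}\circ V_f$ and the coend is functorial in its bifunctor input. The bottom subsquare is exactly the naturality of the comparison map \eqref{eq:natcoend} in its bifunctor input, which is the second assertion of Lemma \ref{lem:natcoend}. The third point, functoriality of $\funcoend$, reduces immediately to functoriality of the coend $\int^{\slice{\Gamma}{t}}$ in its bifunctor variable applied objectwise in $t$.

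I expect the main obstacle to be the bookkeeping in the composition check of the first point: one has to display $\Ustar{t}V_{t''}$ twice, first as $f^\star \Ustar{t'}V_{t''}$ (to identify the intermediate coend $\int^{\slice{\Gamma}{t'}}\Ustar{t'}V_{t''}$ appearing in $\funcoend(V)(g)\circ \funcoend(V)(f)$) and then as $(gf)^\star \Ustar{t''}V_{t''}$ (to match the definition of $\funcoend(V)(gf)$), and these two identifications must be shown to be compatible via Remark \ref{rem:natcoend2}. Once this is set up properly the rest is essentially the universal property of coends.
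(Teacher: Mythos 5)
Your proposal is correct and matches the paper's own (very terse) proof, which simply says the claims ``boil down to the naturality of \eqref{eq:natcoend} and Remark \ref{rem:natcoend2}''; you have spelled out the same ingredients in more detail. One small slip: the naturality square for $\funcoend(h)$ decomposes as a \emph{horizontal} pasting of two subsquares (matching the two-arrow decomposition of $\funcoend(V)(f)$), not a vertical one, and the ``top subsquare'' you describe is really a square in $\B(\slice{\Gamma}{t},\cC)$ whose image under $\int^{\slice{\Gamma}{t}}$ is the left subsquare in $\cC$; the content of what you say is right nonetheless.
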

\begin{proof}
    The fact that the equalities in \eqref{item:lemnat1} indeed define a functor $\funcoend(V)\in\cC^\Gamma$ boils down to the naturality of \eqref{eq:natcoend} and Remark \ref{rem:natcoend2}. The fact that $\funcoend(h)$ is indeed a natural transformation follows as well from the naturality of \eqref{eq:natcoend}.
\end{proof}

\begin{lem}\label{lem:natZeta}
    The morphism $\zeta$ defined by \eqref{eq:defiZetaNat} is a morphism of $\cO$-spectra.
\end{lem}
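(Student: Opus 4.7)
The plan is to fix a morphism $g\colon t\to t'$ in $\cO$ and show commutativity of
\[\xymatrix@C=2.7em{
\int^{\slice{\cO}{t}}\Ustar{t}\Idos_t \ar[r]^-{\zeta}\ar[d]_-{\funcoend(\Idos)(g)} & \bbK^G(A^{(t)}, M_GB)\ar[d]^-{\bbK^G(A^{(g)},\,M_GB)}\\
\int^{\slice{\cO}{t'}}\Ustar{t'}\Idos_{t'}\ar[r]^-{\zeta} & \bbK^G(A^{(t')}, M_GB),
}\]
reusing the symbol $\zeta$ for each component. My first step is to invoke the universal property of the top-left coend: commutativity of this square is equivalent to commutativity after precomposing with each structural morphism $\can_\alpha\colon\Idos_t(r,r)\to\int^{\slice{\cO}{t}}\Ustar{t}\Idos_t$, as $\alpha\colon r\to t$ ranges over the objects of $\slice{\cO}{t}$.

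Next I would unwind $\funcoend(\Idos)(g)$ according to Lemma \ref{lem:nat} as the composite of the natural transformation $\Idos_g$ with the morphism of coends produced by Lemma \ref{lem:natcoend} for the postcomposition functor $\slice{\cO}{t}\to\slice{\cO}{t'}$. Both pieces interact transparently with $\can_\alpha$: by naturality of the structural morphism, $\Idos_g$ carries $\can_\alpha$ to $\can_\alpha\circ\Idos_g(r,r)$; and Lemma \ref{lem:natcoend} identifies the further image in the bottom coend with $\can_{g\circ\alpha}$. Applying \eqref{eq:defiZetaNat} at $t'$, the bottom composite precomposed with $\can_\alpha$ becomes $\bbK^G(A^{(t')},\cR(\zeta_r))\circ\bbK^G(A^{(g)},\cR[(B\rtimes r)^{(r)}])$. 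Similarly, using \eqref{eq:defiZetaNat} at $t$, the top composite precomposed with $\can_\alpha$ simplifies to $\bbK^G(A^{(g)},M_GB)\circ\bbK^G(A^{(t)},\cR(\zeta_r))$.

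Both composites then collapse to the single morphism $\bbK^G(A^{(g)},\cR(\zeta_r))$ by the bifunctoriality of $\bbK^G(-,-)$. The crucial input---already built into Lemma \ref{lem:natcounit} \eqref{item:defizetaGH}---is that $\zeta_r\colon(B\rtimes r)^{(r)}\to M_G\otimes B$ depends only on $r$ and not on the ambient $t$, so that the same $\cR(\zeta_r)$ appears in both composites. I do not foresee a serious obstacle: the coend bookkeeping is packaged by the preceding lemmas, and the equality of composites is then immediate from bifunctoriality of $\bbK^G(-,-)$ together with the $t$-independence of $\zeta_r$.
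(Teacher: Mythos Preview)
Your proposal is correct and follows essentially the same approach as the paper: reduce to precomposition with the structural morphisms $\can_\alpha$ via the universal property of the coend, then check that the resulting square commutes by bifunctoriality of $\bbK^G(-,-)$. You are more explicit than the paper about unwinding $\funcoend(\Idos)(g)$ through Lemmas \ref{lem:nat} and \ref{lem:natcoend}, whereas the paper simply asserts that precomposing with $\can_\alpha$ yields the evident square in $\bbK^G$; otherwise the arguments coincide.
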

\begin{proof}
    The codomain of $\zeta$ is clearly an $\cO$-spectrum. Its domain is an $\cO$-spectrum as well; indeed, it is $\funcoend(t\mapsto \Idos_t)$
    where $\scrC$ is the functor of Lemma \ref{lem:nat}. Let $f:t\to t'$ be a morphism in $\cO$. We claim that following square commutes:
    \[\xymatrix{
        \intg{t} \Ustar{t} \Idos_t\ar[r]^-{\zeta}\ar[d]_-{f} & \bbK^G(A^{(t)}, M_GB)\ar[d]^-{f} \\
        \intg{t'}\Ustar{t'} \Idos_{t'}\ar[r]^-{\zeta} & \bbK^G(A^{(t')}, M_GB)
        }\]
    Indeed, by the universal property of the coend, it suffices to show that the square commutes when precomposed with the structural morphisms
    \[\left[\Ustar{t} \Idos_t\right](\alpha, \alpha)\to \textstyle\intg{t}\Ustar{t} \Idos_t\]
    for every object $\alpha:r\to t$ of $\slice{\cO}{t}$. Upon precomposing with the latter we get the following square, that clearly commutes:
    \[\xymatrix{
        \bbK^G(A^{(t)}, \cR\left[(B\rtimes r)^{(r)}\right])\ar[r]^-{\zeta_{r}}\ar[d]_-{f} & \bbK^G(A^{(t)}, M_GB)\ar[d]^-{f} \\
        \bbK^G(A^{(t')}, \cR\left[(B\rtimes r)^{(r)}\right])\ar[r]^-{\zeta_{r}} & \bbK^G(A^{(t')}, M_GB)
        }\]
    This proves the lemma.
\end{proof}

\subsection{The morphism \texorpdfstring{$\varphi$}{phi}}\label{sec:morfi}
In this section we define a morphism $\varphi$ that will be part of the natural zig-zag of Theorem \ref{thm:ZZAG}.
Let $\cte:\Spt^\cO\to\B(\cO, \Spt)$ be the functor that adds a constant contravariant variable, defined by
\begin{equation}\label{eq:defiDelta}\cte_F(s,r)=F(r)\end{equation}
for $F\in\Spt^\cO$ and $r,s\in\cO$. Let $\Uast{t}:\B(\cO, \Spt)\to \Spt^{(\slice{\cO}{t})^\op \times \cO}$ be the restriction of the contravariant variable along the forgetful functor. Recall the definitions of $\J\in\Spt^\cO$ and $\Iuno_t\in\B(\cO, \Spt)$ from \eqref{eq:defisFuntores}. Define a morphism
\begin{equation}\label{eq:phiSharp}\varphi^\sharp:\Uast{t}\cte_\J\to \Uast{t}\Iuno_t\end{equation}
as follows. For objects $r$ of $\cO$ and $\alpha:s\to t$ of $\slice{\cO}{t}$, let
\begin{equation}\label{eq:defiPhi}\varphi^\sharp_{(\alpha, r)}:\left[\Uast{t}\cte_\J\right](\alpha, r)\to \left[\Uast{t}\Iuno_t\right](\alpha, r)\end{equation}
be the following composition:
\[
    \xymatrix{\bbK(A, \cR(B\rtimes r)) \ar[r]^-{(-)^{(t)}} &
    \bbK^G(A^{(t)}, [\cR(B\rtimes r)]^{(t)})\ar[r]^-{\alpha^*}
    & \bbK^G(A^{(t)}, [\cR(B\rtimes r)]^{(s)})}
\]
It is easily verified that $\varphi^\sharp$ is a natural transformation of bifunctors. Indeed, for morphisms $f:r\to r'$ in $\cO$ and $g:\alpha\to\alpha'$ in $\slice{\cO}{t}$, the following diagrams commute:

\[
    \xymatrix@C=-1em{&\bbK(A, \cR(B\rtimes r))\ar@/_2.5pc/[ddl]_-{\varphi^\sharp_{(\alpha, r)}} \ar[d]^{(-)^{(t)}} \ar@/^2.5pc/[ddr]^-{\varphi^\sharp_{(\alpha', r)}}& \\
    &\bbK^{G}(A^{(t)},[\cR(B\rtimes r)]^{(t)})\ar[dr]^{(\alpha')^{*}} \ar[dl]_{\alpha^{*}}&\\
    \bbK^{G}(A^{(t)},[\cR(B\rtimes r)]^{(s)})
    && \bbK^{G}(A^{(t)},[\cR(B\rtimes r)]^{(s')})\ar[ll]^-{g^{*}}
    }
\]

\[
    \xymatrix@C=3em{
    \bbK(A, \cR(B\rtimes r))\ar@/_6.7pc/[dd]_(.25){\varphi^\sharp_{(\alpha, r)}}\ar[d]^-{(-)^{(t)}}\ar[r]^-{f_*} & \bbK(A, \cR(B\rtimes r'))\ar[d]_-{(-)^{(t)}}\ar@/^6.7pc/[dd]^(.25){\varphi^\sharp_{(\alpha, r')}} \\
    \bbK^G(A^{(t)}, [\cR(B\rtimes r)]^{(t)})\ar[d]^-{\alpha^*}\ar[r]^-{f_*} & \bbK^G(A^{(t)}, [\cR(B\rtimes r')]^{(t)})\ar[d]_-{\alpha^*} \\
    \bbK^G(A^{(t)}, [\cR(B\rtimes r)]^{(s)})\ar[r]^-{f_*} & \bbK^G(A^{(t)}, [\cR(B\rtimes r')]^{(s)})
    }
\]

\begin{rem}\label{rem:choiceOfSlice1}
    The definition of the components of $\varphi^\sharp$ \eqref{eq:defiPhi} makes use of the structural morphism $\alpha:s\to t$ and it is not clear how to define $\varphi^\sharp$ as a morphism $\cte_\J\to \Iuno_t$ in $\B(\cO, \Spt)$. This is one of the reasons that motivated our choice of $\slice{\cO}{t}$ as the indexing category for coends.
\end{rem}

\begin{cons}\label{const:nat2}
    By Remark \ref{rem:resExt} we have, for each $t\in\cO$, a pair of adjoint functors:
    \begin{equation}\label{eq:adjResPF}\xymatrix{
        \Uexcl{t}:\Spt^{(\slice{\cO}{t})^\op\times \cO}\ar@<.5ex>[r]\ar@<-.5ex>@{<-}[r] & \B(\cO, \Spt):\Uast{t}
        }\end{equation}
    Let $f:t\to t'$ be a morphism in $\cO$ and write $f^*:\Spt^{(\slice{\cO}{t'})^\op\times\cO}\to\Spt^{(\slice{\cO}{t})^\op\times\cO}$ for the restriction of the contravariant variable along $f:\slice{\cO}{t}\to\slice{\cO}{t'}$. Note that $f^*\circ \Uast{t'}=\Uast{t}$ and consider the following diagram of solid arrows for $F\in\Spt^\cO$ and $H\in\B(\cO, \Spt)$:
    \[\xymatrix{
        \B(\cO,\Spt)\left(\Uexcl{t'}\Uast{t'}\cte_F, H\right)\ar[r]^-{\cong}\ar@{-->}[d] &
        \Spt^{(\slice{\cO}{t'})^\op\times\cO}\left(\Uast{t'}\cte_F, \Uast{t'}H\right)\ar[d]^-{f^*} \\
        \B(\cO, \Spt)\left(\Uexcl{t}\Uast{t}\cte_F, H\right)\ar[r]^-{\cong} &
        \Spt^{(\slice{\cO}{t})^\op\times\cO}\left(\Uast{t}\cte_F, \Uast{t}H\right)
        }\]
    Let the dashed arrow complete the diagram to a commutative square. By the Yoneda Lemma, the dashed arrow is induced by precomposition with a unique morphism $\Uexcl{t}\Uast{t}\cte_F\to \Uexcl{t'}\Uast{t'}\cte_F $. The latter, for varying $f$, assemble into a functor $\cO\to\B(\cO, \Spt)$, $t\mapsto \Uexcl{t}\Uast{t}\cte_F $. This construction is, moreover, clearly natural in $F$, so that we get a functor:
    \[\xymatrix@R=0em{\Spt^\cO\ar@{->}[r] & \B(\cO,\Spt)^\cO \\
        F\ar@{|->}[r] & (t\mapsto \Uexcl{t}\Uast{t}\cte_F )}\]
\end{cons}

\begin{rem}\label{rem:constr}
    Let $F\in\Spt^\cO$. Later on, it will be useful to have an explicit description of the functor $\cO\to \B(\cO, \Spt)$, $t\mapsto \Uexcl{t}\Uast{t}\cte_F $ mentioned in Construction \ref{const:nat2}. Let us first describe the bifunctor $\Uexcl{t}\Uast{t}\cte_F \in\B(\cO,\Spt)$ for fixed $t\in\cO$. For $r,s\in\cO$ we have:
    \[\left[\Uexcl{t}\Uast{t}\cte_F \right](s,r)=
        \coprod_{\alpha\in\cO(s,t)}\left[\Uast{t}\cte_F \right](\alpha, r)=
        \coprod_{\cO(s,t)} F(r)\]
    For a morphism $f:r\to r'$ in $\cO$, the induced morphism $f_*=\left[\Uexcl{t}\Uast{t}\cte_F \right](s,f)$ equals the morphism:
    \[\coprod F(f):\coprod_{\cO(s,t)} F(r)\to \coprod_{\cO(s,t)} F(r')\]
    For a morphism $g:s'\to s$ in $\cO$, the induced morphism $g^*=\left[\Uexcl{t}\Uast{t}\cte_F \right](g,r)$ is the unique morphism making the following triangle commute for all $\beta\in\cO(s,t)$:
    \[\xymatrix@R=1.5em@C=4em{F(r)\ar[r]^-{\can_\beta}\ar@/_1pc/[dr]_-{\can_{\beta\circ g}} &
        \displaystyle\coprod_{\cO(s,t)}F(r)\ar[d]^-{g^*} \\
        & \displaystyle\coprod_{\cO(s',t)}F(r)
        }\]
    Now let $h:t\to t'$ be a morphism in $\cO$. Then the components of the induced natural transformation $h_*:\Uexcl{t}\Uast{t}\cte_F \to \Uexcl{t'}\Uast{t'}\cte_F $ are the unique morphisms making the following triangle commute for all $\beta\in\cO(s,t)$:
    \[\xymatrix@R=1.5em@C=4em{F(r)\ar[r]^-{\can_\beta}\ar@/_1pc/[dr]_-{\can_{h\circ \beta}} &
        \displaystyle\coprod_{\cO(s,t)}F(r)\ar[d]^-{h_*} \\
        & \displaystyle\coprod_{\cO(s,t')}F(r)
        }\]
\end{rem}

\begin{lem}
    Fix $t\in\cO$ and let $\varphi^\sharp:\Uast{t}\cte_\J \to \Uast{t}\Iuno_t$ be the morphism in $\Spt^{(\slice{\cO}{t})^\op\times\cO}$ defined in \eqref{eq:phiSharp}. Under the adjunction \eqref{eq:adjResPF}, $\varphi^\sharp$ corresponds to a morphism $\varphi:\Uexcl{t}\Uast{t}\cte_\J \to M_t$ in $\B(\cO,\Spt)$.
    Explicitly, for $r,s\in\cO$, the component $\varphi_{(s,r)}:[\Uexcl{t}\Uast{t}\delta_J](s,r)\to \Iuno_t(s,r)$ is the unique morphism making the following triangle commute for every $\alpha\in\cO(s,t)$:
    \[\xymatrix{\displaystyle\coprod_{\cO(s,t)}J(r)\ar[r]^-{\varphi_{(s,r)}} &
        M_t(s,r) \\
        J(r)\ar[u]^-{\can_{\alpha}}\ar@/_1pc/[ur]_-{\varphi^\sharp_{(\alpha,r)}} &
        }\]
    Then the latter, for varying $t$, assemble into a morphism in $\B(\cO,\Spt)^\cO$---where the domain of $\varphi$ is considered as an object of $\B(\cO,\Spt)^\cO$ as explained in Construction \ref{const:nat2}.
\end{lem}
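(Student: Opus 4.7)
The plan will address the three assertions in the statement in turn: the existence of the morphism $\varphi$, its explicit pointwise description, and its naturality in $t$.

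First, the existence is purely formal: the hom-set bijection of the adjunction \eqref{eq:adjResPF} transports $\varphi^\sharp\in\Spt^{(\slice{\cO}{t})^\op\times\cO}(\Uast{t}\cte_\J,\Uast{t}\Iuno_t)$ to its adjoint $\varphi=\epsilon_{\Iuno_t}\circ\Uexcl{t}(\varphi^\sharp)$, where $\epsilon$ denotes the counit of \eqref{eq:adjResPF}. Second, to establish the pointwise characterization, I will combine the previous description with Remark \ref{rem:constr}, which identifies $[\Uexcl{t}\Uast{t}\cte_\J](s,r)=\coprod_{\cO(s,t)}J(r)$ together with its canonical injections $\can_\alpha$ indexed by $\alpha\in\cO(s,t)$. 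Unravelling the adjoint transpose through these identifications yields the required equality $\varphi_{(s,r)}\circ\can_\alpha=\varphi^\sharp_{(\alpha,r)}$, and uniqueness is immediate from the universal property of the coproduct.

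Third, for the naturality in $t$, fix a morphism $h:t\to t'$ in $\cO$ and consider the square
\[\xymatrix@C=3em{
\Uexcl{t}\Uast{t}\cte_\J\ar[r]^-{\varphi}\ar[d]_-{h_*} & \Iuno_t\ar[d]^-{h_*}\\
\Uexcl{t'}\Uast{t'}\cte_\J\ar[r]^-{\varphi} & \Iuno_{t'}}\]
in $\B(\cO,\Spt)$. Evaluating at $(s,r)\in\cO^\op\times\cO$ and invoking the universal property of the coproduct, commutativity reduces, after precomposition with each $\can_\alpha$ for $\alpha\in\cO(s,t)$, to the identity
\[h_*\circ\varphi^\sharp_{(\alpha,r)}=\varphi^\sharp_{(h\circ\alpha,r)};\]
here I use Remark \ref{rem:constr} to identify $h_*\circ\can_\alpha=\can_{h\circ\alpha}$. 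Unfolding the definitions from \eqref{eq:defiPhi} and applying both sides to $x\in\bbK(A,\cR(B\rtimes r))$, this in turn amounts to
\[\alpha^*\circ x^{(t)}\circ A^{(h)}=(h\circ\alpha)^*\circ x^{(t')},\]
which I obtain by combining naturality of the construction $A\mapsto A^{(-)}$ (yielding $x^{(t)}\circ A^{(h)}=[\cR(B\rtimes r)]^{(h)}\circ x^{(t')}$) with contravariant functoriality of $(-)^{(-)}$ on proper morphisms (yielding $\alpha^*\circ[\cR(B\rtimes r)]^{(h)}=(h\circ\alpha)^*$).

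The main obstacle will be purely organizational: keeping simultaneous track of the restriction/extension adjunction, the two variances of the bifunctors involved, and the contravariance of $(-)^{(-)}$ in the indexing variable. Once Remark \ref{rem:constr} supplies the explicit coproduct description of the left Kan extension and its behaviour under $h$, each assertion reduces to a short diagram chase resting on the two elementary naturality properties of polynomial-functions algebras noted above.
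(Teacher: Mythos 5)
Your overall structure matches the paper's: the reduction (precompose with each $\can_\alpha$ to reduce to $h_*\circ\varphi^\sharp_{(\alpha,r)}=\varphi^\sharp_{(h\circ\alpha,r)}$) and the subsequent decomposition into two commutativities are exactly the paper's decomposition into two squares (whose outer rectangle is the claim). The formal parts---the existence via the unit/counit of the adjunction, the pointwise description via the coproduct decomposition $[\Uexcl{t}\Uast{t}\cte_J](s,r)=\coprod_{\cO(s,t)}J(r)$, uniqueness from the universal property, and the second commutativity $\alpha^*\circ[\cR(B\rtimes r)]^{(h)}=(h\circ\alpha)^*$ from contravariant functoriality of $(-)^{(-)}$---are all fine.

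The gap is in the first commutativity, which you state as ``$x^{(t)}\circ A^{(h)}=[\cR(B\rtimes r)]^{(h)}\circ x^{(t')}$, which I obtain by combining naturality of the construction $A\mapsto A^{(-)}$.'' This is asserted as a formal naturality, but what actually has to be proved is that two \emph{morphisms of spectra}
\[
\bbK(A,\cR(B\rtimes r))\longrightarrow \bbK^G(A^{(t')},[\cR(B\rtimes r)]^{(t)})
\]
coincide: namely $(A^{(h)})^*\circ(-)^{(t)}$ and $([\cR(B\rtimes r)]^{(h)})_*\circ(-)^{(t')}$. The map $(-)^{(t)}\colon\bbK(A,C)\to\bbK^G(A^{(t)},C^{(t)})$ is not an evaluation of a natural transformation at an algebra homomorphism; it is built levelwise on the simplicial mapping spaces $\scrK(J^pA,C)$ via the classifying maps $\clas\colon J^{p+v}(A\otimes \ell^{(t)})\to J^{p+v}(A)\otimes\ell^{(t)}$ and the explicit $I^v\times\Delta^q$-models, so ``applying both sides to $x$'' only establishes the equality after you have an explicit description of this construction and verify the square there. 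This is precisely the content of Lemma~\ref{lem:sqSptComm}, which the paper invokes at this exact point and proves by an explicit simplex-level diagram chase involving $\clas$. Your proposal treats this as elementary naturality of the functor $A\mapsto A^{(-)}$ on $\GAlgl$, which is not the issue; the issue is the compatibility of the \emph{spectrum-level} implementation of $-\otimes\ell^{(-)}$ with pre- and post-composition, and that requires the argument supplied by Lemma~\ref{lem:sqSptComm}.
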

\begin{proof}
    Let $f:t\to t'$ be a morphism in $\cO$. We have to show that the following square in $\B(\cO,\Spt)$ commutes:
    \[\xymatrix{\Uexcl{t}\Uast{t}\cte_\J \ar[r]^-{\varphi}\ar[d]_-{f_*} &
        \Iuno_t\ar[d]^-{f_*} \\
        \Uexcl{t'}\Uast{t'}\cte_\J \ar[r]^-{\varphi} &
        \Iuno_{t'}
        }\]
    By Remark \ref{rem:constr}, for $r,s\in\cO$, we have $[\Uexcl{t}\Uast{t}\cte_\J ](s,r)=\coprod_{\cO(s,t)}J(r)$.
    Thus, it suffices to show that the following square commutes for $r,s\in\cO$:
    \[\xymatrix{\displaystyle\coprod_{\cO(s,t)}J(r)\ar[r]^-{\varphi}\ar[d]_-{f_*} &
        \Iuno_t(s,r)\ar[d]^-{f_*} \\
        \displaystyle\coprod_{\cO(s,t')}J(r)\ar[r]^-{\varphi} &
        \Iuno_{t'}(s,r)
        }\]
    By the universal property of the coproduct it suffices to show that this square commutes when precomposed with every structural morphism into the coproduct in the upper left corner. Let $\alpha\in\cO(s,t)$. Upon precomposing the latter square with $\can_\alpha:J(r)\to \coprod_{\cO(s,t)}J(r)$ we get:
    \[\xymatrix{J(r)\ar[r]^-{\varphi^\sharp_{(\alpha,r)}}\ar@/_1pc/[dr]_-{\varphi^\sharp_{(f\circ \alpha,r)}} &
        \Iuno_t(s,r)\ar[d]^-{f_*} \\
        & \Iuno_{t'}(s,r)
        }\]
    Unravelling the definition of $\varphi^\sharp$, this triangle becomes:
    \[\xymatrix@C=1.3em{
        \bbK(A, \cR(B\rtimes r))\ar[r]^-{(-)^{(t)}}\ar[d]_-{(-)^{(t')}} &
        \bbK^G(A^{(t)}, [\cR(B\rtimes r)]^{(t)})\ar[d]^-{f_*}\ar[r]^-{\alpha^*} &
        \bbK^G(A^{(t)}, [\cR(B\rtimes r)]^{(s)})\ar[d]^-{f_*} \\
        \bbK^G(A^{(t')}, [\cR(B\rtimes r)]^{(t')})\ar[r]^-{f^*} &
        \bbK^G(A^{(t')}, [\cR(B\rtimes r)]^{(t)})\ar[r]^-{\alpha^*} &
        \bbK^G(A^{(t')}, [\cR(B\rtimes r)]^{(s)})
        }\]
    The square on the right clearly commutes. The one on the left commutes by Lemma \ref{lem:sqSptComm}.
\end{proof}

\subsection{Model structure on categories of bifunctors}\label{sec:modcatbi}
Let $\cC$ be a model category and let $\Gamma$ be a small category. In this section, we endow $\B(\Gamma, \cC)$ with a model structure that will allow us to build models for the \emph{homotopy coends} of certain morphisms of bifunctors. We will need these later on to define the morphisms going backwards in the zig-zag of $\cO$-spectra of Theorem \ref{thm:ZZAG}.

\begin{prop}[\cite{luriehtt}*{Proposition A.2.8.2}]\label{prop:htt}Let $\cC$ be a combinatorial model category and let $\Gamma$ be a small category. Then there exist two combinatorial model structures on $\cC^\Gamma$:
    \begin{itemize}
        \item The \emph{injective model structure}, denoted $\cC^{\Gamma}_\inj$, where weak equivalences and cofibrations are defined objectwise.
        \item The \emph{projective model structure}, denoted $\cC^{\Gamma}_\proj$, where weak equivalences and fibrations are defined objectwise.
    \end{itemize}
\end{prop}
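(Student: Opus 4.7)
The plan is to verify the axioms of a combinatorial model category separately for the projective and injective structures, with the projective case being the easier one and the injective case requiring the combinatoriality hypothesis in an essential way.

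For the projective structure I would proceed by transfer along the adjunction
\[
\xymatrix{
\prod_{\gamma \in \mathrm{ob}\,\Gamma} \cC \ar@<.5ex>[r]^-{\mathrm{Lan}} &
\cC^{\Gamma} \ar@<.5ex>[l]^-{U}
}
\]
where $U$ is the evident forgetful functor sending a diagram to its family of values. The product category carries the componentwise model structure, which is combinatorial since $\cC$ is. Applying Kan's transfer theorem for cofibrantly generated model structures (which requires checking the small object argument and that relative $J$-cell complexes are weak equivalences) yields the projective structure. Both conditions reduce to verifying that, for a set $I$ (resp.\ $J$) of generating cofibrations (resp.\ trivial cofibrations) of $\cC$, the set $\{\mathrm{Lan}_{\gamma}(i) : i \in I,\, \gamma \in \mathrm{ob}\,\Gamma\}$ (resp.\ with $J$) generates. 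The small object argument applies because the free functors $\mathrm{Lan}_{\gamma}$ preserve colimits, and the acyclicity of relative $J$-cell complexes follows objectwise from the corresponding property in $\cC$, using that $(\mathrm{Lan}_{\gamma} j)(\gamma')$ is a coproduct of copies of the domain/codomain of $j$ indexed by $\Gamma(\gamma, \gamma')$.

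For the injective structure I would apply Jeff Smith's recognition theorem for combinatorial model categories. The weak equivalences are defined objectwise and trivially satisfy the 2-out-of-3 and closure-under-retracts axioms; their accessibility as a full subcategory of the arrow category $(\cC^{\Gamma})^{[1]}$ follows from accessibility of the weak equivalences in $\cC$ together with the fact that small limits of accessible subcategories along accessible functors (here, the evaluation functors $\mathrm{ev}_{\gamma}$) remain accessible. The cofibrations are defined objectwise; what needs to be produced is a \emph{set} $I_{\mathrm{inj}}$ of objectwise cofibrations such that every objectwise cofibration is a retract of a relative $I_{\mathrm{inj}}$-cell complex, and such that morphisms with the right lifting property against $I_{\mathrm{inj}}$ are simultaneously objectwise trivial fibrations.

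The main obstacle is precisely finding this generating set $I_{\mathrm{inj}}$. The key input is combinatoriality: since $\cC$ is locally presentable, there is a regular cardinal $\kappa$ such that $\cC$ is $\kappa$-accessible and the domains and codomains of the generators of $\cC$ are $\kappa$-presentable. Then a bounded-cardinality argument, using the fact that in $\cC^{\Gamma}$ the $\kappa$-presentable objects form a set up to isomorphism (since $\Gamma$ is small) and that every objectwise cofibration can be expressed as a transfinite composition of pushouts of objectwise cofibrations between $\kappa$-presentable diagrams, produces the desired set. With $I_{\mathrm{inj}}$ in hand, the remaining hypothesis of Smith's theorem---that pushouts and transfinite compositions of objectwise trivial cofibrations are weak equivalences---follows objectwise from the corresponding property in $\cC$, since colimits in $\cC^{\Gamma}$ are computed pointwise. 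Since the whole argument is carried out in full detail in \cite{luriehtt}*{Proposition A.2.8.2}, I would simply cite that reference rather than reproduce it.
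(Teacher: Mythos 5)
Your proposal is correct and follows the standard argument, which is essentially the one given in the cited reference: transfer along the free/forgetful adjunction with $\prod_{\gamma}\cC$ for the projective structure, and Smith's recognition theorem together with a bounded-cardinality argument for the injective structure. The paper itself offers no proof and simply cites \cite{luriehtt}*{Proposition A.2.8.2}, exactly as you propose to do.
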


We will always consider $\B(\Gamma,\cC)$ as a model category with the structure $(\cC_\inj^{\Gamma^\op})_\proj^{\Gamma}$ whenever this structure exists. The model structure on $\B(\Gamma, \Spt)$ exists for any $\Gamma$ by Lemma \ref{lem:sptCombi} and Proposition \ref{prop:htt}.

\begin{thm}[\cite{daneses}*{Theorem 4.1}]\label{thm:coendquillen}
    Let $\cC$ be a model category and $\Gamma$ be a small category such that the model structure on $\B(\Gamma, \cC)$ exists. Then the functor
    \[\textstyle\int^{\Gamma}:\B(\Gamma, \cC)\to \cC\]
    is a left Quillen functor.
\end{thm}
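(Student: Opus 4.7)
The plan is to exhibit a right adjoint $R\colon\cC\to\B(\Gamma,\cC)$ to $\int^\Gamma$ and verify that $R$ is right Quillen. For $X\in\cC$, I would set $R(X)(a,b):=X^{\Gamma(b,a)}$, the product of copies of $X$ indexed by $\Gamma(b,a)$, with functoriality obtained by reindexing along the action of $\Gamma$ on its hom-sets. To verify the adjunction $\B(\Gamma,\cC)(T,R(X))\cong\cC(\int^\Gamma T,X)$, I would unpack a natural transformation $\alpha\colon T\to R(X)$ via the universal property of the product into a family of morphisms $\alpha_{a,b,f}\colon T(a,b)\to X$ indexed by triples $(a,b,f\colon b\to a)$. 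The naturality conditions on $\alpha$ in both variables then force $\alpha_{a,b,f}=\beta_a\circ T(\id_a,f)=\beta_b\circ T(f,\id_b)$ with $\beta_c:=\alpha_{c,c,\id_c}$, which is precisely the dinaturality condition on $\{\beta_c\}_{c\in\Gamma}$; by the universal property of coends, such dinatural families correspond to morphisms $\int^\Gamma T\to X$.

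To show that $R$ is right Quillen I would argue slice by slice in the covariant variable. For each $c\in\Gamma$ the functor $R(-)(-,c)\colon\cC\to\cC^{\Gamma^\op}$ is right adjoint to evaluation $\ev_c\colon\cC^{\Gamma^\op}\to\cC$; this follows from the same unpacking argument applied to natural transformations $F\to R(X)(-,c)$, which are determined by the single component $F(c)\to X$ corresponding to $\id_c$. Because both cofibrations and weak equivalences in $\cC_\inj^{\Gamma^\op}$ are defined objectwise, $\ev_c$ is automatically left Quillen, and hence its right adjoint $R(-)(-,c)$ is right Quillen from $\cC$ to $\cC_\inj^{\Gamma^\op}$.

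By construction, a morphism in $\B(\Gamma,\cC)=(\cC_\inj^{\Gamma^\op})_\proj^\Gamma$ is a (trivial) fibration exactly when, for each $c\in\Gamma$, its restriction to the slice with covariant variable fixed at $c$ is a (trivial) fibration in $\cC_\inj^{\Gamma^\op}$. Given a (trivial) fibration $X\to Y$ in $\cC$, each such slice of $R(X)\to R(Y)$ is a (trivial) fibration in $\cC_\inj^{\Gamma^\op}$ by the right Quillen property of $R(-)(-,c)$ established in the previous step. Therefore $R$ preserves fibrations and trivial fibrations, and $\int^\Gamma$ is left Quillen.

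The step I expect to require the most care is the correct identification of the right adjoint $R$ together with the variance bookkeeping in verifying the adjunction isomorphism; once that is settled, the Quillen property follows essentially for free from the elementary observation that $\ev_c$ is left Quillen for $\cC_\inj^{\Gamma^\op}$, which is really just a reformulation of the defining feature of the injective model structure.
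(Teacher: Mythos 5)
Your proposal is correct and takes essentially the same route as the paper: identify the same right adjoint $R$ (with $R(X)(s,r)=\prod_{\Gamma(r,s)}X$, which is your $X^{\Gamma(b,a)}$), and then check that $R$ is right Quillen by observing, slice by slice in the covariant variable, that $R(-)(-,c)$ is right adjoint to the evaluation $\ev_c:\cC^{\Gamma^\op}_\inj\to\cC$, which is left Quillen because cofibrations and weak equivalences in the injective structure are objectwise. The paper merely asserts the adjunction without the detailed unpacking you provide, but the argument is otherwise the same.
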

\begin{proof}
    This is \cite{daneses}*{Theorem 4.1}; we sketch the proof for completeness. For $c\in\cC$, define $R(c):\Gamma^\op\times\Gamma\to\cC$ by:
    \[R(c)(s,r):=\prod_{\alpha\in\Gamma(r, s)}c\]
    For morphisms $f:r\to r'$ and $g:s'\to s$ in $\Gamma$, let $f_*$ and $g^*$ be the unique morphisms making the following diagrams commute for all $\alpha\in\Gamma(r',s)$ and all $\beta\in\Gamma(r,s')$:
    \[
        \xymatrix@C=4em{R(c)(s,r)\ar[r]^-{f_*}\ar@/_0.5pc/[dr]_-{\can_{\alpha\circ f}} & R(c)(s,r')\ar[d]^-{\can_{\alpha}} \\
        & c
        }
        \hspace{2em}
        \xymatrix@C=4em{R(c)(s,r)\ar[r]^-{g^*}\ar@/_0.5pc/[dr]_-{\can_{g\circ\beta}} & R(c)(s',r)\ar[d]^-{\can_{\beta}} \\
        & c
        }
    \]
    It is easily verified that $R:\cC\to\cC^{\Gamma^\op\times\Gamma}$ is right adjoint to $\int^\Gamma:\cC^{\Gamma^\op\times\Gamma}\to\cC$. Thus, proving the theorem is equivalent to showing that
    \[R:\cC\to\B(\Gamma, \cC)\]
    preserves fibrations and trivial fibrations. Let $c\to c'$ be a (trivial) fibration in $\cC$. To prove that $R(c)\to R(c')$ is a (trivial) fibration in $\B(\Gamma, \cC)=(\cC^{\Gamma^\op}_\inj)^{\Gamma}_\proj$, it suffices to show that $R(c)(-,r)\to R(c')(-,r)$ is a (trivial) fibration in $\cC^{\Gamma^\op}_\inj$ for every $r\in\Gamma$. But the latter holds since, for every $r$, there is a Quillen adjunction
    \[\ev_r:\cC^{\Gamma^\op}_\inj\rightleftarrows \cC:R(-)(-,r)\]
    where $\ev_r$ is the evaluation at $r$ \cite{daneses}*{Corollary 2.3 (iii)}. Indeed, this adjunction is Quillen since $\ev_r$ clearly preserves cofibrations and trivial cofibrations.
\end{proof}

\begin{lem}[\cite{luriehtt}*{Proposition A.2.8.7}]\label{lem:resLeftQuillen1}
    Let $\cC$ be a model category and let $f:\Gamma\to\Lambda$ be a functor. Then the restriction functor $f^*:\cC^\Lambda\to\cC^\Gamma$ fits into the following Quillen adjunctions, whenever the model structures in question exist:
    \begin{enumerate}
        \item $f^*:\cC^\Lambda_\inj\rightleftarrows \cC^\Gamma_\inj:f_*$
        \item $f_!:\cC^\Gamma_\proj\rightleftarrows \cC^\Lambda_\proj:f^*$
    \end{enumerate}
\end{lem}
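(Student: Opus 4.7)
The plan is to verify the conditions characterizing a Quillen adjunction, working directly from the objectwise definitions of weak equivalences, cofibrations, and fibrations supplied by Proposition \ref{prop:htt}. Both adjoints to $f^*$ exist as Kan extensions because $\cC$, being a model category, is bicomplete; so the task reduces, in each case, to checking the appropriate preservation property for $f^*$ itself.

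First I would check that $f^*$ preserves objectwise weak equivalences in both model structures. This is immediate from the pointwise formula $(f^*X)(\gamma)=X(f(\gamma))$: given a natural transformation $\alpha:X\to Y$ in $\cC^\Lambda$ with every component $\alpha_\lambda$ a weak equivalence in $\cC$, the component $(f^*\alpha)_\gamma=\alpha_{f(\gamma)}$ is a weak equivalence for every $\gamma\in\Gamma$.

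For claim (1), it remains to check that $f^*:\cC^\Lambda_\inj\to\cC^\Gamma_\inj$ preserves cofibrations. Since cofibrations in both injective structures are defined objectwise, the same componentwise argument applies: if $\alpha_\lambda$ is a cofibration for every $\lambda\in\Lambda$, then so is $\alpha_{f(\gamma)}$ for every $\gamma\in\Gamma$. Hence $f^*$ is left Quillen, with right adjoint $f_*$ given by right Kan extension along $f$.

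For claim (2), the dual argument applies. Fibrations in both projective structures are defined objectwise, and $f^*$ preserves them by exactly the same pointwise calculation, so $f^*$ is right Quillen with left adjoint $f_!$ given by left Kan extension along $f$. The only subtle point not covered by these direct verifications is the existence of the relevant model structures on $\cC^\Lambda$ and $\cC^\Gamma$; this is precisely the hypothesis of the lemma and, in the combinatorial case used elsewhere in the paper, is guaranteed by Proposition \ref{prop:htt}. No serious obstacle is expected, as everything reduces to the fact that restriction along $f$ is evaluation at $f(\gamma)$, componentwise.
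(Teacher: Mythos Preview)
Your argument is correct and is essentially the standard direct verification. The paper does not supply its own proof of this lemma but simply cites \cite{luriehtt}*{Proposition A.2.8.7}; your objectwise check is exactly what underlies that reference.
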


\begin{rem}\label{rem:resExt}
    Let $\cC$ be a category with small coproducts and let $\Gamma$ be a small category. Fix $t\in\Gamma$ and let $\ForgetfulFunctor_t:\slice{\Gamma}{t}\to \Gamma$ be the forgetful functor. Then there is an adjunction:
    \[\begin{tikzcd}
            \Uexcl{t}:\cC^{(\slice{\Gamma}{t})^\op} \arrow[r,shift left]
            &
            \cC^{\Gamma^\op}:\Uast{t} \arrow[l,shift left]
        \end{tikzcd}\]
    Moreover, the pushforward functor $\Uexcl{t}$ can be explicitely described as follows. For $s\in\Gamma$, we have:
    \[\left[\Uexcl{t} F\right](s)=\coprod_{\alpha\in\Hom(s, t)}F(\alpha)\]
    For a morphism $g:s'\to s$ in $\Gamma$, $\left[\Uexcl{t} F\right](g)$ is the unique morphism making the following squares commute, where the vertical arrows are the structural morphisms into the coproducts:
    \[\xymatrix@C=7em{
        \displaystyle\coprod_{\alpha\in\Hom(s, t)}F(\alpha)\ar[r]^-{\left[\Uexcl{t} F\right](g)} & \displaystyle\coprod_{\alpha'\in\Hom(s', t)}F(\alpha') \\
        F(\beta)\ar[u]^-{\can_\beta}\ar[r]^-{F(g)} & F(\beta\circ g)\ar[u]_-{\can_{\beta\circ g}}
        }\]
\end{rem}

\begin{lem}[cf. \cite{luriehtt}*{Lemma A.2.8.10}]\label{lem:resLeftQuillen2}
    Let $\cC$ be a model category, $\Gamma$ be a small category, fix $t\in\Gamma$, and let $\ForgetfulFunctor_t:\slice{\Gamma}{t}\to\Gamma$ be the forgetful functor from the slice category. Then the following adjunctions are Quillen adjunctions, whenever the model structures in question exist:
    \begin{enumerate}
        \item\label{item:adj1} $\Uast{t}:\cC^{\Gamma}_\proj\rightleftarrows \cC^{\slice{\Gamma}{t}}_\proj:\Uastd{t}$
        \item\label{item:adj2} $\Uexcl{t}:\cC^{(\slice{\Gamma}{t})^\op}_\inj\rightleftarrows \cC^{\Gamma^\op}_\inj:\Uast{t}$
    \end{enumerate}
\end{lem}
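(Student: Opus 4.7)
My plan is to exploit the special structure of the forgetful functor $\ForgetfulFunctor_t:\slice{\Gamma}{t}\to\Gamma$: it is a \emph{discrete (Grothendieck) fibration}, in the sense that every morphism $f:s\to s'$ of $\Gamma$ admits a unique lift $f:(s,\alpha'\circ f)\to(s',\alpha')$ in $\slice{\Gamma}{t}$ ending at any prescribed object $(s',\alpha')$. In particular, the fiber of $\ForgetfulFunctor_t$ over $s$ is the discrete set $\Gamma(s,t)$. This observation, combined with standard closure properties of model categories, is all that will be needed.

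For (1) I would first describe the right adjoint $\Uastd{t}$ explicitly. Being a pointwise right Kan extension along $\ForgetfulFunctor_t$, its value $(\Uastd{t}X)(s)$ is a limit over the comma category $(s\downarrow\ForgetfulFunctor_t)$. A cofinality argument---using that every object $(r,\alpha,f)$ of this comma category receives a unique map from the fiber object $(s,\alpha\circ f,\id_s)$, namely $g=f$---shows that the inclusion of the fiber is initial and the limit collapses to
\[(\Uastd{t}X)(s)\cong\prod_{\beta\in\Gamma(s,t)}X(s,\beta).\]
Since arbitrary products of (trivial) fibrations are (trivial) fibrations in any model category (both classes being characterized by right lifting properties), $\Uastd{t}$ preserves objectwise (trivial) fibrations, which are precisely the (trivial) fibrations of the projective structure. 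This makes $\Uast{t}$ left Quillen.

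For (2), the dual formula $[\Uexcl{t}F](s)=\coprod_{\alpha\in\Gamma(s,t)}F(\alpha)$ is already recorded in Remark \ref{rem:resExt} (and admits the analogous cofinality justification, with final subcategories replacing initial ones). In any combinatorial model category, arbitrary coproducts of (trivial) cofibrations are (trivial) cofibrations, so $\Uexcl{t}$ preserves objectwise (trivial) cofibrations---precisely the (trivial) cofibrations of the injective structure. Hence $\Uexcl{t}$ is left Quillen.

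I do not anticipate a real obstacle; the conceptual point worth highlighting is that this lemma goes beyond the general principle recorded in Lemma \ref{lem:resLeftQuillen1}. That principle only yields $\Uast{t}$ as right Quillen between projective structures and as left Quillen between injective structures. The extra adjointness asserted here---$\Uast{t}$ \emph{also} left Quillen projectively and $\Uexcl{t}$ \emph{also} left Quillen injectively---is exactly what the discrete fibration property of $\ForgetfulFunctor_t$ provides, by reducing the relevant Kan extensions to fiberwise products and coproducts indexed by the discrete sets $\Gamma(s,t)$.
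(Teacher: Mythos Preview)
Your proposal is correct and follows essentially the same approach as the paper: both argue via the explicit coproduct formula for $\Uexcl{t}$ (and the dual product formula for $\Uastd{t}$), using that coproducts of (trivial) cofibrations are (trivial) cofibrations and dually for products of (trivial) fibrations. Your framing in terms of $\ForgetfulFunctor_t$ being a discrete fibration, together with the cofinality argument justifying the product formula, adds useful context that the paper leaves implicit.
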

\begin{proof}
    To prove \eqref{item:adj2} is a Quillen adjunction, let us show that $\Uexcl{t}$ preserves cofibrations and trivial cofibrations. Recall from Remark \ref{rem:resExt} that, for $F\in\cC^{(\slice{\Gamma}{t})^\op}$ and $s\in\Gamma$, we have:
    \[\left[\Uexcl{t} F\right](s)=\coprod_{\alpha\in\Gamma(s, t)}F(\alpha)\]
    Let $\eta:F\to F'$ be a morphism in $\cC^{(\slice{\Gamma}{t})^\op}$. For $s\in\Gamma$, $\Uexcl{t}(\eta)(s)$ is the coproduct of the morphisms:
    \[\{\eta(\alpha):F(\alpha)\to F'(\alpha)\}_{\alpha\in\Gamma(s, t)}\]
    If $\eta$ is a cofibration (resp. a trivial cofibration) in $\cC^{(\slice{\Gamma}{t})^\op}_\inj$, the latter are cofibrations (resp. trivial cofibrations) in $\cC$ and, thus, $\Uexcl{t}(\eta)(s)$ is again a cofibration (resp. trivial cofibration) in $\cC$. Since this holds for every $s\in\Gamma$, it follows that $\Uexcl{t}(\eta)$ is a cofibration (resp. a trivial cofibration) in $\cC^{\Gamma^\op}_\inj$.
    The proof of \eqref{item:adj1} is dual to that of \eqref{item:adj2}, using the fact that
    \[\left[\Uastd{t} F\right](s)=\prod_{\alpha\in\Gamma(t, s)}F(\alpha)\]
    for $F\in \cC^{\slice{\Gamma}{t}}$ and $s\in\Gamma$.
\end{proof}

\begin{lem}[\cite{luriehtt}*{Remark A.2.8.6}]\label{lem:htt}
    Let $F:\cC\rightleftarrows \cD:U$ be a Quillen adjunction of combinatorial model categories and let $\Gamma$ be a small category. Then composition with $F$ and $U$ determines a Quillen adjunction
    \[F^{\Gamma}:\cC^{\Gamma}\rightleftarrows \cD^{\Gamma}:U^{\Gamma}\]
    with respect to either the injective or the projective model structures.
\end{lem}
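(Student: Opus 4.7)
The plan is to reduce everything to objectwise checks, since both the injective and projective model structures on diagram categories are characterized by objectwise conditions.

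First, I would verify that $F^\Gamma \dashv U^\Gamma$ is an adjunction. Given the unit $\unit:\id_\cC \to UF$ and counit $\counit:FU\to \id_\cD$ of $F\dashv U$, define their componentwise analogues $\unit^\Gamma:\id_{\cC^\Gamma}\to U^\Gamma F^\Gamma$ and $\counit^\Gamma:F^\Gamma U^\Gamma \to \id_{\cD^\Gamma}$ by $(\unit^\Gamma_X)_\gamma = \unit_{X(\gamma)}$ and $(\counit^\Gamma_Y)_\gamma = \counit_{Y(\gamma)}$. Naturality of $\unit$ and $\counit$ (together with the fact that $F$ and $U$ are functorial) guarantees that these are natural transformations of functors $\Gamma\to\cC$ and $\Gamma\to\cD$ respectively, and the triangle identities for $F^\Gamma\dashv U^\Gamma$ hold pointwise because they hold for $F\dashv U$.

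Next I would check the Quillen condition in each case. For the injective structures, a morphism $\eta:X\to Y$ in $\cC^\Gamma_\inj$ is a cofibration (respectively a trivial cofibration) iff $\eta_\gamma$ is one in $\cC$ for every $\gamma\in\Gamma$. Since $F$ is left Quillen, each $F(\eta_\gamma) = (F^\Gamma\eta)_\gamma$ is a cofibration (respectively trivial cofibration) in $\cD$, so $F^\Gamma\eta$ is one in $\cD^\Gamma_\inj$. Thus $F^\Gamma$ preserves cofibrations and trivial cofibrations, proving it is left Quillen with respect to the injective structures. For the projective structures, the argument is dual: fibrations and trivial fibrations in $\cD^\Gamma_\proj$ are detected objectwise, and $U$ being right Quillen implies $U^\Gamma$ preserves them.

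There is no serious obstacle here; the whole argument is a formal bookkeeping exercise once one has Proposition \ref{prop:htt} at hand. The only thing worth flagging is that the combinatoriality hypothesis on $\cC$ and $\cD$ is needed to ensure that the injective and projective structures on $\cC^\Gamma$ and $\cD^\Gamma$ actually exist, so the statement makes sense; the Quillen adjunction itself is then automatic from objectwise verification.
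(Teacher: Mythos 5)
Your proof is correct and is the standard argument; the paper itself does not spell out a proof and simply defers to \cite{luriehtt}*{Remark A.2.8.6}, where the same objectwise reduction is the intended reasoning. The two observations you make — that the injective (resp. projective) structure detects cofibrations (resp. fibrations) and trivial ones objectwise, and that a Quillen adjunction is equivalently certified by the left adjoint preserving (trivial) cofibrations or the right adjoint preserving (trivial) fibrations — are exactly the two ingredients needed, and you correctly flag that combinatoriality is a hypothesis about existence of the model structures rather than about the Quillen condition itself.
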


\begin{lem}\label{lem:deltaLQ}
    Let $\cte:\Spt^\cO_\proj\to \B(\cO, \Spt)$ be the functor defined in \eqref{eq:defiDelta}. Then $\cte$ is a left Quillen functor.
\end{lem}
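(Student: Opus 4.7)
The plan is to factor $\cte$ through the currying identification $\B(\cO,\Spt)\cong(\Spt^{\cO^\op})^\cO$ and then string together two standard Quillen-adjunction results already established in this section. Recall that by convention $\B(\cO,\Spt)$ carries the model structure $(\Spt^{\cO^\op}_\inj)^\cO_\proj$, so the currying isomorphism sends a bifunctor $F(s,r)$ to the functor $r\mapsto F(-,r):\cO\to \Spt^{\cO^\op}$. Under this identification, the bifunctor $\cte_F(s,r)=F(r)$ corresponds to the composite $r\mapsto \const(F(r))$, where $\const:\Spt\to\Spt^{\cO^\op}$ is the constant diagram functor. In other words, $\cte$ is nothing but the post-composition functor $\const^\cO:\Spt^\cO\to(\Spt^{\cO^\op})^\cO$.

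First I would apply Lemma \ref{lem:resLeftQuillen1}(1) to the unique functor $p:\cO^\op\to\pt$. Since $\Spt^{\pt}_\inj=\Spt$ and $p^*$ is exactly the constant diagram functor, that lemma yields a Quillen adjunction
\[
\const:\Spt\rightleftarrows \Spt^{\cO^\op}_\inj:\lim.
\]
In particular, $\const:\Spt\to\Spt^{\cO^\op}_\inj$ is left Quillen.

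Next I would invoke Lemma \ref{lem:htt} with $\Gamma=\cO$, applied to this Quillen adjunction with projective model structures on both sides of the outer $\cO$. This produces a Quillen adjunction
\[
\const^\cO:\Spt^\cO_\proj\rightleftarrows (\Spt^{\cO^\op}_\inj)^\cO_\proj=\B(\cO,\Spt).
\]
Since $\const^\cO$ agrees with $\cte$ under the currying identification, this gives the claim.

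There is no real obstacle here: the only thing to be careful about is the unwinding of the convention $\B(\cO,\Spt)=(\Spt^{\cO^\op}_\inj)^\cO_\proj$ and the verification that $\cte$ is indeed post-composition with a constant-diagram functor. Once that is set up, the statement reduces to two applications of the previously established Quillen-adjunction lemmas (namely Lemmas \ref{lem:resLeftQuillen1} and \ref{lem:htt}) with no further calculation required.
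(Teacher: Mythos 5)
Your proof is correct and follows essentially the same route as the paper's: identify $\cte$ with $\const^\cO$ under the currying $\B(\cO,\Spt)=(\Spt^{\cO^\op}_\inj)^\cO_\proj$, show that $\const:\Spt\to\Spt^{\cO^\op}_\inj$ is left Quillen, and conclude with Lemma \ref{lem:htt}. The only superficial difference is that you derive left-Quillenness of $\const$ by specializing Lemma \ref{lem:resLeftQuillen1}(1) to $p:\cO^\op\to\pt$, whereas the paper simply observes directly that $\const$ preserves cofibrations and trivial cofibrations and has $\lim$ as a right adjoint; both are fine.
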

\begin{proof}
    Let $\const:\Spt\to\Spt^{\cO^\op}_\inj$ be the functor that sends a spectrum $E$ to the constant functor on $E$. Then $\const$ is left Quillen since it clearly sends cofibrations (resp. trivial cofibrations) to cofibrations (resp. trivial cofibrations) and it has a right adjoint (taking limit). Note that we have:
    \[\cte=(\const)^\cO:\Spt^{\cO}_\proj\to \left[\Spt^{\cO^\op}_\inj\right]^\cO_\proj=\B(\cO,\Spt)\]
    Then $\cte$ is left Quillen by Lemma \ref{lem:htt}.
\end{proof}

\begin{lem}\label{lem:restIsLeftQuillen}
    Let $\cC$ be a combinatorial model category, $\Gamma$ be a small category, fix $t\in\Gamma$, and let $\ForgetfulFunctor_t:\slice{\Gamma}{t}\to\Gamma$ be the forgetful functor from the slice category. Then the restriction on both variables
    \[\Ustar{t}:\B(\Gamma,\cC)\to\B(\slice{\Gamma}{t}, \cC)\]
    is a left Quillen functor.
\end{lem}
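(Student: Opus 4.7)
The plan is to factor $\Ustar{t}$ as a composition of two functors, each restricting a single variable, and show that each of the two steps is left Quillen separately. More precisely, restricting first only the contravariant variable along $\ForgetfulFunctor_t^\op:(\slice{\Gamma}{t})^\op\to\Gamma^\op$ and then only the covariant variable along $\ForgetfulFunctor_t:\slice{\Gamma}{t}\to\Gamma$, one sees that $\Ustar{t}$ is the composition
\[
(\cC^{\Gamma^\op}_\inj)^\Gamma_\proj \xrightarrow{\;\Phi^\Gamma\;} (\cC^{(\slice{\Gamma}{t})^\op}_\inj)^\Gamma_\proj \xrightarrow{\;\Uast{t}\;} (\cC^{(\slice{\Gamma}{t})^\op}_\inj)^{\slice{\Gamma}{t}}_\proj,
\]
where $\Phi=(\ForgetfulFunctor_t^\op)^*:\cC^{\Gamma^\op}_\inj\to\cC^{(\slice{\Gamma}{t})^\op}_\inj$ is the restriction functor and $\Uast{t}$ now denotes restriction of the covariant variable (taking values in $\cD:=\cC^{(\slice{\Gamma}{t})^\op}_\inj$). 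The intermediate model structure exists because Proposition \ref{prop:htt} guarantees that $\cD$ is again combinatorial.

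For the first step, Lemma \ref{lem:resLeftQuillen1} applied to $f=\ForgetfulFunctor_t^\op$ tells us that $\Phi$ is left Quillen as a functor between injective model structures. By Lemma \ref{lem:htt}, the induced functor $\Phi^\Gamma$ on $\Gamma$-diagrams is then left Quillen between the corresponding projective model structures. For the second step, we invoke Lemma \ref{lem:resLeftQuillen2} \eqref{item:adj1} with the combinatorial model category $\cD$ in the role of $\cC$: this gives that $\Uast{t}:\cD^\Gamma_\proj\to\cD^{\slice{\Gamma}{t}}_\proj$ is left Quillen. Since a composition of left Quillen functors is left Quillen, we conclude that $\Ustar{t}$ is left Quillen.

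The only subtle point is the use of slice categories in the second step: for a general functor $f:\Gamma\to\Lambda$, restriction $f^*$ between projective model structures is only right Quillen (as the right adjoint of $f_!$), and would not suffice here; the fact that the forgetful functor from a slice category has the stronger property of being "doubly Quillen" is exactly the content of Lemma \ref{lem:resLeftQuillen2} and is what makes the argument go through.
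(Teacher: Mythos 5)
Your proof is correct and takes essentially the same approach as the paper: the paper draws a commutative square whose two paths are exactly the two possible orders of single-variable restriction, notes the vertical arrows are left Quillen by Lemma \ref{lem:resLeftQuillen1} and Lemma \ref{lem:htt} and the horizontal ones by Lemma \ref{lem:resLeftQuillen2} \eqref{item:adj1}, and concludes by composing; you spell out one of those two paths. Your closing remark about the slice-category special case being what rescues the covariant-variable step (where a generic $f^*$ on projective structures would only be right Quillen) is exactly the right observation and matches the role Lemma \ref{lem:resLeftQuillen2} plays in the paper's argument.
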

\begin{proof}
    Consider the following commutative square:
    \[\xymatrix{B(\Gamma,\cC)=(\cC^{\Gamma^\op}_\inj)^{\Gamma}_\proj\ar[dr]^-{\Ustar{t}}\ar[r]^-{\Uast{t}}\ar[d]_-{\Uast{t}} & (\cC^{\Gamma^\op}_\inj)^{\slice{\Gamma}{t}}_\proj \ar[d]^-{\Uast{t}} \\
        \left[\cC^{(\slice{\Gamma}{t})^\op}_\inj\right]^{\Gamma}_\proj \ar[r]^-{\Uast{t}} & \left[\cC^{(\slice{\Gamma}{t})^\op}_\inj\right]^{\slice{\Gamma}{t}}_\proj=\B(\slice{\Gamma}{t}, \cC)}\]
    The horizontal morphisms are left Quillen by Lemma \ref{lem:resLeftQuillen2} \eqref{item:adj1} and the vertical morphisms are left Quillen by Lemma \ref{lem:resLeftQuillen1} and Lemma \ref{lem:htt}. Then $\Ustar{t}$ is left Quillen as well, for being the composite of left Quillen functors.
\end{proof}

\subsection{The natural zig-zag}\label{sec:zznat}
In this section we finally construct a zig-zag of $\cO$-spectra inducing \eqref{eq:natAdjGroups} upon taking homotopy groups. We begin with the following lemma, that shows that every $\cO$-spectrum can be canonically described as an objectwise coend.
\begin{lem}\label{lem:nat:isofun}
    Let $\cte:\Spt^\cO_\proj\to \B(\cO, \Spt)$ be the functor defined in \eqref{eq:defiDelta}. For $F\in\Spt^\cO$ and $t\in\cO$, the structural morphisms into the coends
    \begin{equation}\label{eq:nat:isofun}
        \xymatrix{F(t)=\left[\Ustar{t}\cte_F \right](\id_t,\id_t)\ar[r] & \textstyle\intg{t}\Ustar{t}\cte_F }
    \end{equation}
    are isomorphisms. Moreover, these are natural in $t\in\cO$ and in $F\in\Spt^\cO$.
\end{lem}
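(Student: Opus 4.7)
The plan is to exploit two structural features of the situation: first, the bifunctor $\Ustar{t}\cte_F$ has no genuine dependence on its contravariant argument; second, the slice category $\slice{\cO}{t}$ has $\id_t$ as terminal object. Together these two facts will collapse the coend to $F(t)$.

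I would first unpack the definitions. For objects $\alpha:r\to t$ and $\beta:s\to t$ of $\slice{\cO}{t}$, one has
\[[\Ustar{t}\cte_F](\alpha,\beta) = \cte_F(r,s) = F(s),\]
which depends only on $\beta$. In the standard coequalizer presentation
\[\coprod_{f:\alpha\to\beta}[\Ustar{t}\cte_F](\beta,\alpha)\rightrightarrows \coprod_\alpha [\Ustar{t}\cte_F](\alpha,\alpha),\]
the contravariant leg reduces to the identity, while the covariant leg is $F(\ForgetfulFunctor_t(f))$. Hence the coend is canonically identified with the ordinary colimit of $F\circ \ForgetfulFunctor_t:\slice{\cO}{t}\to\Spt$.

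Next, since $\id_t$ is terminal in $\slice{\cO}{t}$, this colimit is computed by evaluation at the terminal object: the structural morphism $(F\circ \ForgetfulFunctor_t)(\id_t) = F(t)\to \colim (F\circ \ForgetfulFunctor_t)$ is an isomorphism. Composed with the identification of the previous step, this is precisely \eqref{eq:nat:isofun}, proving the first assertion.

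Finally, naturality in $F$ is automatic since every step is functorial in $F$. For naturality in $t$, a morphism $h:t\to t'$ in $\cO$ induces the functor $h_*:\slice{\cO}{t}\to\slice{\cO}{t'}$, $\alpha\mapsto h\circ\alpha$. In particular $h_*(\id_t) = h$, viewed as an object of $\slice{\cO}{t'}$ with $(F\circ \ForgetfulFunctor_{t'})(h) = F(t)$. Under the colimit reductions above, the morphism $\funcoend(\cte_{(-)})(h)$ from Lemma \ref{lem:nat} sends $\can_{\id_t}$ to $\can_h$, and the morphism $h:h\to\id_{t'}$ in $\slice{\cO}{t'}$ produces the equality $\can_h = \can_{\id_{t'}}\circ F(h)$ by functoriality of the colimit cone. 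The main obstacle I expect is exactly this last piece of bookkeeping, but it becomes entirely formal once the coend has been recognized as an ordinary colimit and the role of the terminal object has been pinned down.
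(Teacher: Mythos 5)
Your proof is correct and follows essentially the same route as the paper: you note that $\Ustar{t}\cte_F$ is constant in the contravariant variable so the coend collapses to $\colim_{\slice{\cO}{t}} F\circ\ForgetfulFunctor_t$, then use that $\id_t$ is terminal in $\slice{\cO}{t}$ to identify this colimit with $F(t)$, and check naturality. The paper leaves the coequalizer unwinding and the naturality bookkeeping implicit ("it is easily verified"), whereas you spell them out — a harmless elaboration, and the computation you give for naturality in $t$ (tracking $\can_{\id_t}\mapsto\can_h=\can_{\id_{t'}}\circ F(h)$ via the morphism $h\colon h\to\id_{t'}$ in the slice) is exactly the verification the paper has in mind.
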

\begin{proof}
    Fix $t\in\cO$. Since $\Ustar{t}\cte_F $ is constant in the contravariant variable, we have:
    \[\textstyle\int^{\slice{\cO}{t}}\Ustar{t}\cte_F \cong \displaystyle\colim_{\alpha\in\slice{\cO}{t}}F(\ForgetfulFunctor_t(\alpha))\]
    Since $\id_t$ is a final object of $\slice{\cO}{t}$, the structural morphism
    \[\xymatrix{F(t)=F(\ForgetfulFunctor_t(\id_t))\ar[r] & \displaystyle\colim_{\alpha\in\slice{\cO}{t}}F(\ForgetfulFunctor_t(\alpha))}\]
    is an isomorphism. Combining the above we get the desired isomorphism:
    \[\xymatrix{F(t)\ar[r]^-{\cong} & \textstyle\int^{\slice{\cO}{t}}\Ustar{t}\cte_F }\]
    It is easily verified that this is natural in $t$ and in $F$.
\end{proof}

\begin{lem}\label{lem:natUnit}
    Let $F\in\Spt^\cO$ and fix $t\in\cO$. Let $\Uexcl{t}$ and $\Uast{t}$ be the functors that form the adjunction \eqref{eq:adjResPF}. Then there is a morphism of bifunctors $\Ustar{t}\cte_F \to \Ustar{t}\Uexcl{t}\Uast{t}\cte_F $
    described as follows. For objects $\alpha:r\to t$ and $\beta:s\to t$ of $\slice{\cO}{t}$, the component corresponding to the pair $(\beta,\alpha)$ is the structural morphism into the coproduct corresponding to $\beta$:
    \begin{equation}\label{eq:lemNatUnit}\xymatrix{
        \left[\Ustar{t}\cte_F \right](\beta,\alpha)=F(r)\ar[r]^-{\can_\beta} &
        \displaystyle\coprod_{\cO(s,t)}F(r)=\left[\Ustar{t}\Uexcl{t}\Uast{t}\cte_F \right](\beta, \alpha)
        }\end{equation}
    Moreover, upon taking coend we get a morphism of spectra
    \begin{equation}\label{eq:nat:unit}\xymatrix{
            \intg{t}\Ustar{t}\cte_F \ar[r] & \intg{t}\Ustar{t}\Uexcl{t}\Uast{t}\cte_F
        }\end{equation}
    that is natural in $t$.
\end{lem}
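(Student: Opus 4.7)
\medskip

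\noindent\textbf{Proof proposal.} The plan is to exhibit the morphism of \eqref{eq:lemNatUnit} as (the restriction of) the unit of the adjunction $\Uexcl{t}\dashv\Uast{t}$ of Remark~\ref{rem:resExt}, obtain the induced morphism of spectra by applying the coend functor, and finally verify naturality in $t$ by appealing to the universal property of coends together with the explicit description of $t\mapsto \Uexcl{t}\Uast{t}\cte_F $ given in Remark~\ref{rem:constr}.

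First I would construct the bifunctor morphism intrinsically as follows. The adjunction in Remark~\ref{rem:resExt}, applied in each covariant slot, upgrades to an adjunction between $\Spt^{(\slice{\cO}{t})^{\op}\times\cO}$ and $\B(\cO,\Spt)$ whose unit at $\Uast{t}\cte_F $ is a morphism
\[
\eta\colon \Uast{t}\cte_F \longrightarrow \Uast{t}\Uexcl{t}\Uast{t}\cte_F
\]
in $\Spt^{(\slice{\cO}{t})^{\op}\times\cO}$. Restricting the covariant variable along $\ForgetfulFunctor_t\colon \slice{\cO}{t}\to\cO$ yields the desired morphism $\Ustar{t}\cte_F \to \Ustar{t}\Uexcl{t}\Uast{t}\cte_F $ in $\B(\slice{\cO}{t},\Spt)$. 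Unwinding the adjunction, its $(\beta,\alpha)$-component is precisely the structural morphism $\can_\beta$ of \eqref{eq:lemNatUnit}. Naturality in the contravariant slot is then free, and naturality in the covariant slot reduces to the naturality of $\can$ with respect to the maps of coproducts induced by $F(f)$ for $f\colon r\to r'$ in $\cO$.

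Next, applying the functor $\int^{\slice{\cO}{t}}\colon\B(\slice{\cO}{t},\Spt)\to\Spt$ to this bifunctor morphism produces the morphism \eqref{eq:nat:unit} of spectra. The domain is identified, via Lemma~\ref{lem:nat} applied to the constant object $t\mapsto \cte_F $, as the functor $t\mapsto \int^{\slice{\cO}{t}}\Ustar{t}\cte_F $; the codomain is identified, via Lemma~\ref{lem:nat} applied to the functor $t\mapsto \Uexcl{t}\Uast{t}\cte_F $ of Construction~\ref{const:nat2}, as an $\cO$-spectrum as well.

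The main technical point is therefore naturality in $t$. Given $h\colon t\to t'$ in $\cO$, by the universal property of the coend it suffices to check, for every object $\alpha\colon r\to t$ of $\slice{\cO}{t}$, that precomposing the desired naturality square with the structural morphism $\can_\alpha\colon F(r)\to \int^{\slice{\cO}{t}}\Ustar{t}\cte_F $ produces a commutative diagram in $\Spt$. Going down then right yields the composite
\[
F(r)\xrightarrow{\can_{h\circ\alpha}}\int^{\slice{\cO}{t'}}\Ustar{t'}\cte_F \longrightarrow \int^{\slice{\cO}{t'}}\Ustar{t'}\Uexcl{t'}\Uast{t'}\cte_F ,
\]
while going right then down yields the composite
\[
F(r)\xrightarrow{\can_\alpha}\int^{\slice{\cO}{t}}\Ustar{t}\Uexcl{t}\Uast{t}\cte_F \xrightarrow{h_*} \int^{\slice{\cO}{t'}}\Ustar{t'}\Uexcl{t'}\Uast{t'}\cte_F .
\]
The explicit description of $h_*\colon \Uexcl{t}\Uast{t}\cte_F \to \Uexcl{t'}\Uast{t'}\cte_F $ in Remark~\ref{rem:constr} shows that this natural transformation sends the summand indexed by $\alpha$ to the summand indexed by $h\circ\alpha$ via the identity on $F(r)$. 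Combined with the compatibility of the coend structural maps with restriction along $h\colon \slice{\cO}{t}\to\slice{\cO}{t'}$ (Lemma~\ref{lem:natcoend} and Remark~\ref{rem:natcoend2}), this forces the two composites above to coincide. The hard part is really just the bookkeeping to align these two descriptions at the level of structural morphisms; once the indexing of the coproducts is matched to the indexing of the slice categories, commutativity is forced by the universal properties. This completes the sketch.
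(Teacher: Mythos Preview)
Your proposal is correct and follows essentially the same approach as the paper: both verify naturality of the bifunctor morphism via the explicit description in Remark~\ref{rem:constr}, and both establish naturality in $t$ by precomposing with the structural morphisms $\can_\alpha$ and tracking that both composites land in the summand indexed by $h\circ\alpha$. Your framing of the bifunctor morphism as (the restriction of) the adjunction unit of $\Uexcl{t}\dashv\Uast{t}$ is a slightly more conceptual packaging than the paper's direct verification, but the substantive argument is the same.
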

\begin{proof}
    The fact that the morphisms \eqref{eq:lemNatUnit} are natural in $\alpha$ and $\beta$ is easily verified using the explicit description of the bifunctor $\Uexcl{t}\Uast{t}\cte_F $ given in Remark \ref{rem:constr}.

    Let us now prove that \eqref{eq:nat:unit} is natural in $t$. Let $h:t\to t'$ be a morphism in $\cO$. By the universal property of the coend, it suffices to show that the outer square in the following diagram commutes for every $\alpha:r\to t$ in $\slice{\cO}{t}$:
    \[\xymatrix@C=2em@R=2em{F(r)=[\Ustar{t}\cte_F ](\alpha, \alpha)\ar[dr]^-{\can_\alpha}\ar@/^1pc/[rrd]\ar@/_2pc/[ddr] & & \\
        & \intg{t}\Ustar{t}\cte_F \ar[d]_-{h_*}\ar[r]^-{\eqref{eq:nat:unit}} &
        \intg{t}\Ustar{t}\Uexcl{t}\Uast{t}\cte_F \ar[d]^-{h_*} \\
        & \intg{t'}\Ustar{t'}\cte_F \ar[r]^-{\eqref{eq:nat:unit}} &
        \intg{t'}\Ustar{t'}\Uexcl{t'}\Uast{t'}\cte_F
        }\]
    Unravelling the definitions of $h_*$ (see Lemma \ref{lem:nat} and Remark \ref{rem:constr}), it is straightforward to verify that both ways from $F(r)$ to $\intg{t'}\Ustar{t'}\Uexcl{t'}\Uast{t'}\cte_F $ in the diagram above equal the composite:
    \[\xymatrix@C=4em{F(r)\ar[r]^-{\can_{h\circ\alpha}} &
        \displaystyle\coprod_{\cO(r, t')}F(r)\ar[r]^-{\can_{h\circ\alpha}} &
        \intg{t'}\Ustar{t'}\Uexcl{t'}\Uast{t'}\cte_F
        }\]
    This finishes the proof.
\end{proof}

\begin{thm}\label{thm:ZZAG}
    Let $q:Q\xrightarrow{\sim} \id$ and $\bar{q}:\bQ\xrightarrow{\sim}\id$ be, respectively, cofibrant replacements in $\B(\cO,\Spt)$ and $\Spt^{\cO}_{\proj}$. Then we have a zig-zag of $\cO$-spectra as follows:
    \[
        \xymatrix@C=1.6em{
        \intg{t} \Ustar{t} \cte_\J  &
        \intg{t} \Ustar{t} \cte_{\bQ \J}\ar[l]^-{\sim}_-{\bar{q}}\ar[r]^-{\eqref{eq:nat:unit}} &
        \intg{t} \Ustar{t} \Uexcl{t} \Uast{t} \cte_{\bQ \J} \\
        \intg{t} \Ustar{t} Q (\Iuno_t) &
        \intg{t} \Ustar{t} Q \Uexcl{t} \Uast{t} \cte_\J \ar[l]_-{\varphi}  &
        \intg{t} \Ustar{t} Q \Uexcl{t} \Uast{t} \cte_{\bQ \J}\ar[u]_-{q}^-{\rotatebox{90}{$\scriptstyle\sim$}}\ar[l]_-{\bar{q}} \\
        \intg{t} \Ustar{t} Q (\Idos_t )\ar[r]^-{q}\ar[u]^-{\psi}_-{\rotatebox{90}{$\scriptstyle\sim$}} &
        \intg{t} \Ustar{t} \Idos_t \ar[r]^-{\zeta} &
        \bbK^G(A^{(t)}, M_GB)}
    \]
    Moreover, upon identifying
    \[\bbK(A, \cR(B\rtimes t))=\J(t)\overset{\eqref{eq:nat:isofun}}{\cong} \intg{t}\Ustar{t}\cte_\J \]
    and then taking homotopy groups, this zig-zag induces the isomorphism \eqref{eq:natAdjGroups}.
\end{thm}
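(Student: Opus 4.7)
The proof divides into two tasks: confirming the diagram is a zig-zag in $\Spt^{\cO}$ with the three $\sim$-marked arrows being pointwise weak equivalences, and computing its effect on homotopy groups.

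Naturality in $t\in\cO$ of every arrow holds by construction: $\bar{q}$ and $q$ are natural by functoriality of the cofibrant replacements $\bar{Q}$ and $Q$; \eqref{eq:nat:unit} is natural by Lemma \ref{lem:natUnit}; $\varphi$ is the morphism in $\B(\cO,\Spt)^{\cO}$ constructed at the end of Section \ref{sec:morfi}; $\zeta$ is natural by Lemma \ref{lem:natZeta}; and $\psi$ arises by applying $\bbK^G(A^{(t)},-)$ to the natural weak equivalence of Corollary \ref{coro:rtensor}. Applying the functor $\funcoend$ of Lemma \ref{lem:nat} packages these into a zig-zag in $\Spt^{\cO}$. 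For the three marked weak equivalences: the leftmost $\bar{q}$ becomes $\bar{Q}\J(t)\to \J(t)$ under the identification of Lemma \ref{lem:nat:isofun}, which is pointwise a weak equivalence. For the other two ($q$ between $Q\Uexcl{t}\Uast{t}\cte_{\bar{Q}\J}$ and $\Uexcl{t}\Uast{t}\cte_{\bar{Q}\J}$, and $\psi$ between $Q\Idos_t$ and $Q\Iuno_t$), Ken Brown's lemma applied to the left Quillen composite $\intg{t}\circ\Ustar{t}$ (left Quillen by Theorem \ref{thm:coendquillen} and Lemma \ref{lem:restIsLeftQuillen}) reduces the problem to cofibrancy of source and target in $\B(\cO,\Spt)$. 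Cofibrancy of $\Uexcl{t}\Uast{t}\cte_{\bar{Q}\J}$ follows by composing the left Quillen functors $\cte$ (Lemma \ref{lem:deltaLQ}), $\Uast{t}$ (Lemma \ref{lem:resLeftQuillen1} combined with Lemma \ref{lem:htt}), and $\Uexcl{t}$ (Lemma \ref{lem:resLeftQuillen2} \eqref{item:adj2} combined with Lemma \ref{lem:htt}) applied to $\bar{Q}\J$; cofibrancy of $Q\Iuno_t$ and of the two cofibrant replacements is immediate.

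For the homotopy group computation, identify $\intg{t}\Ustar{t}\cte_\J\cong\J(t)=\bbK(A,\cR(B\rtimes t))$ by Lemma \ref{lem:nat:isofun} and trace the composite on $\pi_*$. Once the pointwise isomorphisms $\bar{q}$, $q$, and $\psi$ are inverted, the composite factors through the structural morphism at $\alpha=\id_t\in\slice{\cO}{t}$ of each coend. Using the explicit description of $\Uexcl{t}\Uast{t}$ in Remark \ref{rem:constr}, the morphism \eqref{eq:nat:unit} followed by $\varphi$ reduces at $\id_t$ to $\varphi^\sharp_{(\id_t,t)}$, which by \eqref{eq:defiPhi} is precisely the $(-)^{(t)}$ morphism of \eqref{eq:decompAdj}. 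Combining with $\psi^{-1}$ (the inverse of Corollary \ref{coro:rtensor}) and with $\zeta\circ\can_{\id_t}=\zeta_t$ from \eqref{eq:defiZetaNat}, one recovers on $\pi_*$ the composite of Lemma \ref{lem:natcounit} \eqref{item:couniadj}; combining further with the $G$-stability identification $\bbK^G(A^{(t)},M_GB)\cong \bbK^G(A^{(t)},B)$ recovers the adjunction isomorphism \eqref{eq:natAdjGroups}. The main technical obstacle is precisely this $\pi_*$-level identification: one must verify that the coend formalism of Sections \ref{sec:coend}--\ref{sec:zznat} collapses at $\id_t$ to the pointwise construction of \eqref{eq:decompAdj}, which requires careful bookkeeping of the universal properties defining $\varphi$, $\zeta$, and \eqref{eq:nat:unit}.
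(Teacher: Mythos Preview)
Your proposal is correct and follows essentially the same approach as the paper: naturality via the functor $\funcoend$ of Lemma \ref{lem:nat} together with Lemmas \ref{lem:natUnit} and \ref{lem:natZeta}, Ken Brown's lemma for the marked weak equivalences using that $\intg{t}$, $\Ustar{t}$, $\cte$ are left Quillen, and collapse to the structural morphism at $\id_t$ for the $\pi_*$-computation. The paper carries out this last step by drawing one large commutative diagram whose right column is the zig-zag, whose middle column consists of the structural morphisms $\can_{\id_t}$ into the coends, and whose left column is the composite $\zeta_t\circ\psi^{-1}\circ(-)^{(t)}$; commutativity of each small square is immediate from the definitions, and Lemma \ref{lem:natcounit} \eqref{item:couniadj} then identifies the left column with \eqref{eq:natAdjGroups}. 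Your verbal account of this collapse is accurate, and your treatment of the leftmost $\bar{q}$ via Lemma \ref{lem:nat:isofun} (rather than Ken Brown) is slightly cleaner than the paper's uniform appeal to Ken Brown, since $\cte_\J$ need not be cofibrant.
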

\begin{proof}
    Let us first show that the morphisms appearing in the zig-zag are indeed natural in $t$. For \eqref{eq:nat:unit} this is part of Lemma \ref{lem:natUnit} and for $\zeta$ it is Lemma \ref{lem:natZeta}. The rest of the morphisms are natural in $t$ because they result from aplying the functor $\funcoend$ of Lemma \ref{lem:nat} to appropriate morphisms in $\B(\cO, \Spt)^\cO$.

    The morphisms in the zig-zag labeled with $\sim$ are indeed weak equivalences by Ken Brown's Lemma \cite{hoveymodel}*{Lemma 1.1.12}: they result from applying a left Quillen functor to an appropriate weak equivalence between cofibrant objects. Here we use that the functors $\cte$, $\Ustar{t}$ and $\intg{t}$ are left Quillen by Lemmas \ref{lem:deltaLQ}, \ref{lem:restIsLeftQuillen} and Theorem \ref{thm:coendquillen} respectively.

    The fact that the zig-zag induces \eqref{eq:natAdjGroups} upon taking homotopy groups follows from the commutativity of the following diagram of spectra.

    \[
        \xymatrix{
        \framebox{$\bbK(A, \cR(B\rtimes t))$}
        \ar@{=}[r]
        \ar[dd]^-{\can_{\id_t}} &
        \J(t)
        \ar[r]^-{\can_{\id_t}}_-{\text{$\cong$ by Lem. \ref{lem:nat:isofun}}} &
        \intg{t}\Ustar{t}\cte_\J  \\
        & (\bQ\J)(t)
        \ar[u]^-{\bar{q}}_-{\rotatebox{90}{$\scriptstyle\sim$}}
        \ar[r]^-{\can_{\id_t}}_-{\text{$\cong$ by Lem. \ref{lem:nat:isofun}}}
        \ar[d]_-{\can_{\id_t}} &
        \intg{t}\Ustar{t}\cte_{\bQ \J}
        \ar[u]^-{\bar{q}}_-{\rotatebox{90}{$\scriptstyle\sim$}}
        \ar[d]_-{\eqref{eq:nat:unit}} \\
        \displaystyle\coprod_{\gamma\in\cO(t,t)}\J(t)
        \ar[ddd]^-{\varphi} &
        \displaystyle\coprod_{\gamma\in\cO(t,t)}(\bQ\J)(t)
        \ar[r]^-{\can_{\id_t}}
        \ar[l]_-{\bar{q}} &
        \intg{t}\Ustar{t}\Uexcl{t}\Uast{t}\cte_{\bQ \J} \\
        & \left[Q\Uexcl{t}\cte_{\bQ \J}\right](t,t)
        \ar[u]^-{q}_-{\rotatebox{90}{$\scriptstyle\sim$}}
        \ar[d]_-{\bar{q}}
        \ar[r]^-{\can_{\id_t}} &
        \intg{t}\Ustar{t}Q\Uexcl{t}\Uast{t}\cte_{\bQ \J}
        \ar[u]^-{q}_-{\rotatebox{90}{$\scriptstyle\sim$}}
        \ar[d]_-{\bar{q}} \\
        & \left[Q\Uexcl{t}\cte_\J \right](t,t)
        \ar[d]_-{\varphi}
        \ar[r]^-{\can_{\id_t}} &
        \intg{t}\Ustar{t}Q\Uexcl{t}\Uast{t}\cte_\J
        \ar[d]_-{\varphi} \\
        \Iuno_t(t,t) & (Q\Iuno_t)(t,t)
        \ar[r]^-{\can_{\id_t}}
        \ar[l]_-{q}^-{\sim} &
        \intg{t}\Ustar{t}Q(\Iuno_t) \\
        & (Q\Idos_t )(t,t)
        \ar[r]^-{\can_{\id_t}}
        \ar[u]^-{\psi}_-{\rotatebox{90}{$\scriptstyle\sim$}}
        \ar[d]_-{q} &
        \intg{t}\Ustar{t}Q(\Idos_t )
        \ar[u]^-{\psi}_-{\rotatebox{90}{$\scriptstyle\sim$}}
        \ar[d]_-{q} \\
        & \Idos_t (t,t)
        \ar[r]^-{\can_{\id_t}}
        \ar@/^2pc/[uul]^-{\psi}_-{\hspace{-0.5em}\rotatebox{135}{$\scriptstyle\sim$}}
        \ar@/_1pc/[dr]_-{\zeta_t} &
        \intg{t}\Ustar{t}\Idos_t
        \ar[d]^-{\zeta} \\
        & & \framebox{$\bbK^G(A^{(t)}, M_GB$)}
        \ar@(l,dr) "5,2";"3,1"^-{q}_-{\hspace{-0.5em}\rotatebox{135}{$\scriptstyle\sim$}}
        \ar@(l,l)@/_3pc/ "1,1";"6,1"_-{(-)^{(t)}}
        }
    \]
    Upon taking homotopy groups, the morphisms labeled with $\sim$ in the diagram above become isomorphisms and can be inverted. The zig-zag equals the morphism in the top row followed by the composite of the morphisms in the rightmost column. By Lemma \ref{lem:natcounit} \eqref{item:couniadj}, the composite of the bent morphisms on the left equals, upon taking homotopy groups, the isomorphism \eqref{eq:natAdjGroups}.
\end{proof}

\section{Main theorem}\label{sec:prin}

In this section we prove Theorem \ref{thm:mainthm}. It will turn out to be an easy consequence of the technical Theorem \ref{thm:ZZAG} and the following two lemmas.

\begin{lem}\label{lem:defiAlphaX}
    Let $A$ be an algebra, $B$ be a $G$-algebra and $X$ be a $\GFin$-finite complex. Then there exists a weak equivalence of spectra
    \begin{equation}\label{eq:transNat}
        \alpha_{X}: H^G(X; \bbK^G(A^{(-)}, B))\rightarrow
        \bbK^G(A^{(X)}, B)\end{equation}
    that is natural in $X$.
\end{lem}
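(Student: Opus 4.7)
The plan is to construct $\alpha_X$ via the universal property of the coend and then show it is a weak equivalence by induction on the number of $G$-cells of $X$, after establishing that both sides define $G$-homology theories on $G\FIN$-finite complexes.

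For the construction, note that an $n$-simplex of $X^H = \Hom_{\S^G}(G/H,X)$ corresponds to a $G$-equivariant morphism $\tilde\sigma : G/H \times \Delta^n \to X$. Because $H$ is finite, every $G$-orbit in $G/H\times\Delta^n$ meets a finite subcomplex of $X$ in only finitely many elements, so $\tilde\sigma$ is proper; hence $A^{(-)}$ applies contravariantly to give $\tilde\sigma^* \colon A^{(X)} \to A^{(G/H\times\Delta^n)}$. The projection $G/H\times\Delta^n\to G/H$ induces a $\kk^G$-equivalence $A^{(G/H)} \to A^{(G/H\times\Delta^n)}$ by Lemma~\ref{lem:cortholX}(2) combined with $\kk^G$-homotopy invariance. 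Assembling these simplicially in $G/H$ and applying $\bbK^G(-,B)$ yields, naturally in $G/H\in\OrGFin$, a morphism $X^H_+ \wedge \bbK^G(A^{(G/H)},B) \to \bbK^G(A^{(X)},B)$, whence $\alpha_X$ by the universal property of the coend. Naturality in $X$ along proper $G$-maps (and all $G$-maps between $G\FIN$-finite complexes are proper) is then immediate from functoriality of the pieces.

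For the base case $X=G/H$ with $H\in\FIN$, \cite{dalu}*{Lemma 6.1} provides a canonical identification $H^G(G/H;\bE) \simeq \bE(G/H) = \bbK^G(A^{(G/H)},B)$ under which $\alpha_{G/H}$ is the identity. For the inductive step, both sides are $G$-homology theories on $G\FIN$-finite complexes. On the left this is standard \cite{dalu}. On the right, Remark~\ref{rem:polynomialfcoprod} shows that $A^{(-)}$ sends pushouts of proper cofibrations---e.g.\ those arising from attaching a cell $G/H\times\Delta^n$ along $G/H\times\partial\Delta^n$---to pullback squares of algebras whose structure maps are all surjective by Lemma~\ref{lem:cortholX}(1). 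The resulting kernels form extensions of algebras, which the excisive functor $\bbK^G(-,B)$ converts into fiber sequences of spectra that fit into a homotopy cartesian (equivalently, cocartesian, since $\Spt$ is stable) square. Homotopy invariance on the right follows from $A^{(X\times\Delta^1)}\cong A^{(X)}\otimes\ell^{\Delta^1}$ together with $\kk^G$-homotopy invariance.

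An induction on the number of $G$-cells of $X$, using the base case and the five lemma applied to the cofiber sequences coming from each cell attachment, then shows that $\alpha_X$ is a weak equivalence for every $G\FIN$-finite $X$. I expect the main obstacle to be the spectrum-level construction of $\alpha_X$: one must match the simplicial structure on $X^H$ with the algebraic operations on $A^{(X)}$ while implementing the homotopy-invariance equivalences $A^{(G/H)}\simeq A^{(G/H\times\Delta^n)}$ coherently, so that the requisite dinaturality in $G/H$ holds and the coend of a genuine morphism of bifunctors is taken. The bookkeeping of canonical equivalences here is analogous to (though considerably lighter than) that carried out in Section~\ref{sec:bomba}; once it is in place, the homological comparison is conceptually clean.
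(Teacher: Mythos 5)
The structure of your proof of the weak-equivalence claim---base case $G/H$, upgrading to $G/H\times\Delta^n$ via homotopy invariance, then Mayer--Vietoris induction through the skeletal filtration using Remark~\ref{rem:polynomialfcoprod} and Lemma~\ref{lem:cortholX}---matches the paper's proof (which organizes the induction by dimension rather than cell count, but the five-lemma bookkeeping is the same). The problem is the construction of $\alpha_X$ itself, which you identify as "the main obstacle" but do not resolve, and which in fact cannot be resolved along the route you sketch.

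Concretely: for $\sigma\in X^H_n$ you have $\tilde\sigma^*\colon A^{(X)}\to A^{(G/H\times\Delta^n)}$ and the projection $\pi^*\colon A^{(G/H)}\to A^{(G/H\times\Delta^n)}$. After applying $\bbK^G(-,B)$ these give
\[
\bbK^G(A^{(G/H)},B)\xleftarrow{\ \sim\ }\bbK^G(A^{(G/H\times\Delta^n)},B)\longrightarrow\bbK^G(A^{(X)},B),
\]
a \emph{zig-zag}, not a morphism of spectra, because $(\pi^*)_*$ points in the wrong direction and a weak equivalence of spectra has no preferred strict inverse. "Implementing the homotopy-invariance equivalences coherently" is not a bookkeeping task here: what is needed is a genuine spectrum-level map
\[
\Delta^n_+\wedge\bbK^G(A^{(G/H)},B)\longrightarrow\bbK^G(A^{(G/H\times\Delta^n)},B)
\]
simplicial in $\Delta^n$ and natural in $G/H$, and that does not follow from knowing that $\pi^*$ is a $\kk^G$-equivalence. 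The paper supplies exactly this map in Lemma~\ref{lem:mult}, by an explicit construction at the level of the representing simplicial sets $\scrK(J^p(-),B)$ using the classifying map and the multiplication $\mu$; it then rewrites the coend as a colimit over $G/K\times\Delta^n\downarrow X$ (Lemmas~\ref{lem:fixcolim} and~\ref{lem:prodCat}) so that $\alpha_X$ is assembled directly from $\eqref{eq:lemMult}$ composed with $(\tilde\sigma^*)_*$, avoiding any need to invert. Without an analogue of Lemma~\ref{lem:mult}, your construction of $\alpha_X$ does not get off the ground, so the gap is in the existence of the map, not in the comparison of homology theories.
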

\begin{proof}
    Let $\bE:\OrGFin\to\Spt$ be defined by $\bE(G/H)= \bbK^G(A^{(G/H)}, B)$ and let $X$ be a $\GFin$-complex. Since coends and smash products commute with colimits, we have:
    \begin{align*}
        H^G(X; \bE) & = \int^{G/H}X^H_+\wedge \bE(G/H)                                                                                                                \\[1em]
                    & \cong \int^{G/H} \colim_{G/K\times \Delta^n\downarrow X} (G/K\times\Delta^n)^H_+ \wedge \bE(G/H)      &  & \text{(by Lemma \ref{lem:fixcolim})} \\[1em]
                    & \cong \colim_{G/K\times \Delta^n\downarrow X} \Delta^n_+ \wedge \int^{G/H}  (G/K)^H_+ \wedge \bE(G/H) &  & \text{(by Lemma \ref{lem:prodCat})}  \\[1em]
                    & = \colim_{G/K\times \Delta^n\downarrow X} \Delta^n_+ \wedge H^G(G/K; \bE)                                                                       \\[1em]
                    & \cong \displaystyle\colim_{G/K\times \Delta^n\downarrow X} \Delta^n_+ \wedge \bE(G/K)
    \end{align*}
    Thus, to construct the morphism \eqref{eq:transNat} it suffices to define compatible morphisms \begin{equation}\label{eq:compatMorph}\Delta^n_+\wedge \bbK^G(A^{(G/K)},B)\to \bbK^G(A^{(X)},B)\end{equation}
    for every $f:G/K\times \Delta^n\to X$. Define \eqref{eq:compatMorph} as the composite:
    \[\xymatrix{\Delta^n_+\wedge\bbK^G(A^{(G/K)},B)\ar[r]^-{\eqref{eq:lemMult}} &
        \bbK^G(A^{(G/K\times \Delta^n)},B)\ar[r]^-{f_*} &
        \bbK^G(A^{(X)},B)}\]
    The compatibility of these morphisms is immediate from the naturality of \eqref{eq:lemMult} in $G/K$. This defines the morphism \eqref{eq:transNat}.

    We claim that $\alpha_{X}$ is a weak equivalence of spectra. We will prove the claim by considering the following increasingly more general cases:
    \begin{enumerate}
        \item\label{item:0celda} $X=G/H$ with $H\in \FIN$
        \item\label{item:ncelda} $X=G/H \times \Delta^{n}$ with $H\in \FIN$ and $n\in\N$
        \item\label{item:uniondisjunta} $\displaystyle{X=\bigsqcup^{m}_{i=1} G/H_{i}\times \Delta^{n} }$ with $H_{i}\in \FIN$ and $m,n\in\N$
        \item\label{item:general} $X$ any $\GFin$-finite complex
    \end{enumerate}

    To prove the case \eqref{item:0celda}, note that the identity of $X\cong G/H\times \Delta^0$ is a final object among those $G/K\times\Delta^n\downarrow X$. Then, taking colimit over $G/K\times\Delta^n\downarrow X$ boils down to evaluating at the final object $\id_{G/H\times \Delta^0}$ and the result follows from Lemma \ref{lem:mult}.

    For the case \eqref{item:ncelda}, let $\pi:G/H\times \Delta^{n}\to G/H$ be the projection and consider the following commutative diagram:
    \[
        \xymatrix@C=5em{ H^G(G/H\times \Delta^{n}; \bE) \ar[r]^-{\alpha_{G/H\times \Delta^n}} \ar[d]^-{\rotatebox{90}{$\scriptstyle\sim$}}_{\pi_{*}}& \bbK^G(A^{(G/H\times \Delta^{n})}, B) \ar[d]_-{\rotatebox{90}{$\scriptstyle\sim$}}^{\pi_{*}}\\
        H^G(G/H; \bE) \ar[r]_-{\sim}^-{\alpha_{G/H}} & \bbK^G(A^{(G/H)}, B)
        }
    \]
    The left vertical arrow is an equivalence by homotopy invariance of equivariant homology theories. The right vertical arrow is a weak equivalence by homotopy invariance of $\kk_*^G(-,B)$, since we have:
    \[A^{(G/H\times \Delta^{n})}\cong A^{\left(\bigsqcup_{G/H}\Delta^n\right)}\cong \bigoplus_{G/H} A^{\Delta^n} \cong\left(\bigoplus_{G/H}A\right)\otimes \ell^{\Delta^n}\cong A^{(G/H)}\otimes \ell^{\Delta^n}\]
    The bottom arrow is a weak equivalence by the case \eqref{item:0celda}.

    Now let $X$ be as in case \eqref{item:uniondisjunta}. This case follows from the previous one since we have:
    \begin{align*}
        H^{G}_*(X;\bE)      & \cong \bigoplus^{n}_{i=1}H^{G}_{*}(G/H_{i} \times \Delta^{n};\bE)   \\
        \kk^G_*(A^{(X)}, B) & \cong  \bigoplus_{i=1}^{n}\kk_*^G(A^{(G/H_{i}\times \Delta^{n})},B) \\
        \alpha_X            & =\oplus \alpha_{G/H_i\times \Delta^n}
    \end{align*}

    To prove the case \eqref{item:general} we proceed by induction on $n=\dim X$. The base case $n=0$ holds by the case \eqref{item:uniondisjunta}. Now suppose $n\geq 1$ and assume that $\alpha_Z$ is a weak equivalence for every $\GFin$-finite complex $Z$ with $\dim Z\leq n-1$. Since $X$ is $\GFin$-finite, there is a pushout diagram as follows:
    \[
        \xymatrix{\bigsqcup_{i=1}^{n}G/H_{i} \times \partial \Delta^{n} \ar[r] \ar@{->}[d] & \sk_{n-1}X \ar@{->}[d] & \\
        \bigsqcup_{i=1}^{n}G/H_{i} \times \Delta^{n} \ar[r] & X
        }
    \]
    By Lemma \ref{lem:milnor}, upon applying the functor $A^{(-)}$ we obtain a Milnor square:
    \[
        \xymatrix{A^{(\bigsqcup_{i=1}^{n}G/H_{i} \times \partial \Delta^{n} )}  & A^{(\sk_{n-1}X)}  \ar[l]  \\
        \displaystyle A^{(\bigsqcup_{i=1}^{n}G/H_{i} \times \Delta^{n})} \ar@{->>}[u] & A^{(X)} \ar@{->>}[u] \ar[l]
        }
    \]
    This induces a morphism of Mayer-Vietoris sequences as follows; see Lemma \ref{lem:excision}:
    \[
        \xymatrix@C=5em{
        H_{*}^{G}(\bigsqcup_{i=1}^{n}G/H_{i} \times \partial \Delta^{n}; \bE) \ar[d]  \ar[r]^{\alpha_{\bigsqcup_{i=1}^{n}G/H_{i} \times \partial \Delta^{n}}} & \kk_*^G(A^{(\bigsqcup_{i=1}^{n}G/H_{i} \times \partial \Delta^{n})}, B) \ar[d] \\
        {\begin{array}{@{}c@{}}
            H_{*}^{G}(\bigsqcup_{i=1}^{n}G/H_{i} \times \Delta^{n}; \bE) \\
            \oplus                                                       \\  H_{*}^{G}(\sk_{n-1}X; \bE)
        \end{array}}
        \ar[d] \ar[r]^{\scriptsize{{\begin{array}{@{}c@{}}
                        \alpha_{\bigsqcup_{i=1}^{n}G/H_{i} \times \Delta^{n}} \\
                        \oplus                                                \\  \alpha_{\sk_{n-1}X}
                    \end{array}}}}&
        {\begin{array}{@{}c@{}}
                    \kk_*^G(A^{(\bigsqcup_{i=1}^{n}G/H_{i} \times\Delta^{n})}, B) \\
                    \oplus                                                        \\  \kk^G_*(A^{(\sk_{n-1}X)},B)
                \end{array}}
        \ar[d]  \\
        H_{*}^{G}(X; \bE) \ar[d] \ar[r]^{\alpha_{X}} & \kk^G_*(A^{(X)},B) \ar[d] \\
        H_{*-1}(\bigsqcup_{i=1}^{n}G/H_{i} \times \partial \Delta^{n}; \bE) \ar[r]  &  \kk_{*-1}^G(A^{(\bigsqcup_{i=1}^{n}G/H_{i} \times \partial \Delta^{n})},B ) }
    \]
    The morphisms $\alpha_{\bigsqcup_{i=1}^{n}G/H_{i} \times \partial \Delta^{n}}$ and $\alpha_{\sk_{n-1}X}$ are isomorphisms by the inductive hypothesis.
    The morphism $\alpha_{\bigsqcup_{i=1}^{n}G/H_{i} \times \Delta^{n}}$ is an isomorphism by the case \eqref{item:uniondisjunta}. We conclude by the Five Lemma that $\alpha_{X}$ is an isomorphism.
\end{proof}

\begin{rem}
    The morphism $\alpha_X$ of Lemma \ref{lem:defiAlphaX} can be defined for any $\GFin$-complex $X$. However, the hypothesis of $X$ being $\GFin$-finite is needed for stating that $\alpha_X$ is natural in $X$---the codomain of $\alpha_X$ is a functor on $X$ only when restricted to the full subcategory of $\S^G$ whose objects are $\GFin$-finite complexes.
\end{rem}

\begin{lem}\label{lem:homologiesCoincide}
    Let $G$ be a group  satisfying \eqref{eq:hipG}. Let $A$ be an algebra, $B$ be a $G$-algebra and $Z$ be a $\GFin$-complex. Then there is a natural isomorphism
    \[H^G_*(Z; \bbK(A, \cR(B\rtimes -)))\cong H^G_*(Z; \bbK^G(A^{(-)},B))\]
    induced by a natural zig-zag of spectra.
\end{lem}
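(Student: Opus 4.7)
The plan is to apply the functor $H^G(Z;-)$ objectwise to a natural zig-zag of $\OrGFin$-spectra connecting $\bbK(A, \cR(B\rtimes -))$ to $\bbK^G(A^{(-)}, B)$. This zig-zag is assembled by concatenating the one from Theorem \ref{thm:ZZAG}, which terminates at $\bbK^G(A^{(-)}, M_G B)$, with the natural $G$-stability zig-zag of $\OrGFin$-spectra
\[
\bbK^G(A^{(-)}, M_G B) \xrightarrow{\iota'_*} \bbK^G(A^{(-)}, M_{G_+} B) \xleftarrow{\iota_*} \bbK^G(A^{(-)}, B)
\]
induced by \eqref{eq:zigzag}, whose arrows are objectwise weak equivalences by $G$-stability of $\bbK^G(A^{(-)},-)$. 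A direct inspection of Theorem \ref{thm:ZZAG} shows that every backward-pointing arrow in its zig-zag ($\bar q$, $q$ and $\psi$) is already an objectwise weak equivalence, and the same therefore holds for the combined zig-zag. By construction, this combined zig-zag realizes on $\pi_*$ at each $G/H\in\OrGFin$ the adjunction isomorphism $\kk_*(A, \cR(B\rtimes G/H)) \cong \kk^G_*(A^{(G/H)}, B)$ of Theorem \ref{thm:adj}.

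Applying $H^G(Z;-)$ term-by-term produces a natural-in-$Z$ zig-zag of spectra. By Remark \ref{rem:natHG}, each of its backward-pointing arrows is again a weak equivalence; inverting these in the homotopy category, the zig-zag defines a natural-in-$Z$ morphism $\beta_Z\colon H^G_*(Z; \bbK(A, \cR(B\rtimes -))) \to H^G_*(Z; \bbK^G(A^{(-)}, B))$.

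To prove that $\beta_Z$ is an isomorphism, I would proceed by the cellular induction used in the proof of Lemma \ref{lem:defiAlphaX}. For $Z=G/H$ with $H\in\FIN$, the natural equivalence $H^G(G/H; \bE) \simeq \bE(G/H)$ reduces the claim to the orbit-case statement, which is precisely Theorem \ref{thm:ZZAG}. The case $Z=G/H\times\Delta^n$ follows from homotopy invariance of both sides (using that $A^{(G/H\times\Delta^n)} \cong A^{(G/H)}\otimes\ell^{\Delta^n}$); finite disjoint unions decompose additively; general $\GFin$-finite complexes are handled by induction on dimension via Mayer-Vietoris sequences derived from Lemmas \ref{lem:milnor} and \ref{lem:excision} together with the Five Lemma; and arbitrary $\GFin$-complexes reduce to $\GFin$-finite ones, since both sides commute with filtered colimits of subcomplex inclusions.

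The main obstacle is precisely this cellular-induction step. The forward-pointing arrows in the zig-zag of Theorem \ref{thm:ZZAG} (notably $\varphi$, $\zeta$ and \eqref{eq:nat:unit}) are not objectwise weak equivalences of $\OrGFin$-spectra, so one cannot simply reduce the problem to a zig-zag of spectra-level weak equivalences after applying $H^G(Z;-)$. The resolution is to work with the derived morphism $\beta_Z$ and exploit that both of its source and target are $G$-equivariant homology theories in $Z$, so that the orbit-case isomorphism transports to all $\GFin$-complexes via the cellular argument above.
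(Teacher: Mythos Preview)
Your proposal is correct and follows essentially the same route as the paper: concatenate the zig-zag of Theorem \ref{thm:ZZAG} with the $G$-stability zig-zag, apply $H^G_*(Z;-)$, invert the objectwise weak equivalences via Remark \ref{rem:natHG} to obtain $\beta_Z$, and then prove $\beta_Z$ is an isomorphism by reducing to orbits and running the cellular induction. One minor remark: in the inductive step you only need the Mayer--Vietoris and homotopy invariance properties of the $G$-homology theories $H^G_*(-;\bE)$ themselves, so your appeals to Lemmas \ref{lem:milnor} and \ref{lem:excision} and to the identification $A^{(G/H\times\Delta^n)}\cong A^{(G/H)}\otimes\ell^{\Delta^n}$ (which are relevant in Lemma \ref{lem:defiAlphaX} for the $\kk^G$-side) are superfluous here, though they do no harm.
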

\begin{proof}
    Recall the notation and definitions introduced in Section \ref{sec:bomba}. By Theorem \ref{thm:ZZAG}, we have a zig-zag of $\OrGFin$-spectra as follows, where the morphisms labelled with $\sim$ are objectwise weak equivalences of spectra:
    \begin{equation}\label{eq:zzagmain}\begin{gathered}
            \xymatrix@C=2em@R=1.5em{
            \bbK(A, \cR(B\rtimes t))&
            \intg{t} \Ustar{t} \cte_{\bQ \J}\ar[l]_-{\sim}\ar[r] &
            \intg{t} \Ustar{t} \Uexcl{t} \Uast{t} \cte_{\bQ \J} \\
            \intg{t} \Ustar{t} Q (\Iuno_t) & &
            \ar[ll]_-{\varphi\circ\bar{q}}
            \intg{t} \Ustar{t} Q \Uexcl{t} \Uast{t} \cte_{\bQ \J}\ar[u]^-{\rotatebox{90}{$\scriptstyle\sim$}}\\
            \intg{t} \Ustar{t} Q (\Idos_t )\ar[rr]^-{\zeta\circ q}\ar[u]_-{\rotatebox{90}{$\scriptstyle\sim$}} &&
            \bbK^G(A^{(t)}, M_GB)}
        \end{gathered}
    \end{equation}
    We also have a zig-zag of $\OrGFin$-spectra as follows, induced by the $G$-stability zig-zag \eqref{eq:zigzag}:
    \begin{equation}\label{eq:zzagGstab}\xymatrix{\bbK^G(A^{(t)}, M_GB)\ar[r]^-{\iota'}_-{\sim} & \bbK^G(A^{(t)}, M_{G_+}B) & \bbK^G(A^{(t)}, B)\ar[l]_-{\iota}^-{\sim}}\end{equation}
    In \eqref{eq:zzagGstab}, both morphisms are objectwise weak equivalences of spectra since the $G$-algebra homomorphisms of \eqref{eq:zigzag} induce isomorphisms in $\kk^G$.
    To ease notation, we name the $\OrGFin$-spectra appearing in the zig-zags above as follows:
    \begin{align*}
        \bF(t)   & := \bbK(A, \cR(B\rtimes t))                                       \\
        \bR_1(t) & :=\textstyle\intg{t} \Ustar{t} \cte_{\bQ \J}                      \\
        \bR_2(t) & :=\textstyle\intg{t} \Ustar{t} \Uexcl{t} \Uast{t} \cte_{\bQ \J}   \\
        \bR_3(t) & :=\textstyle\intg{t} \Ustar{t} Q \Uexcl{t} \Uast{t} \cte_{\bQ \J} \\
        \bR_4(t) & :=\textstyle\intg{t} \Ustar{t} Q (\Iuno_t)                        \\
        \bR_5(t) & :=\textstyle\intg{t} \Ustar{t} Q (\Idos_t )                       \\
        \bR_6(t) & := \bbK^G(A^{(t)}, M_GB)                                          \\
        \bR_7(t) & := \bbK^G(A^{(t)}, M_{G_+}B)                                      \\
        \bE(t)   & := \bbK^G(A^{(t)}, B)
    \end{align*}
    Upon concatenating \eqref{eq:zzagmain} and \eqref{eq:zzagGstab} we get a zig-zag of $\OrGFin$-spectra as follows, where the morphisms labelled with $\sim$ are objectwise weak equivalences of spectra:
    \begin{equation}\label{eq:mainThmZzag}
        \xymatrix@C=2em{ \bF & \bR_1\ar[l]_-{\sim}\ar[r] & \bR_2 & \bR_3\ar[l]_-{\sim}\ar[r] & \bR_4 & \bR_5\ar[l]_-{\sim}\ar[r] & \bR_6\ar[r]^-{\sim} & \bR_7 & \bE\ar[l]_-{\sim}}
    \end{equation}
    Moreover, by Theorem \eqref{thm:ZZAG}, this zig-zag induces the isomorphism \eqref{eq:natAdjGroups} upon taking homotopy groups.

    Let $Z$ be a $\GFin$-complex. After applying $H_*^G(Z;-)$ to \eqref{eq:mainThmZzag} we get a zig-zag of graded abelian groups:
    \begin{equation}\label{eq:zzaghomology}\xymatrix@C=1.7em{
        H_*^G(Z; \bF) & H_*^G(Z; \bR_1)\ar[l]_-{\cong}\ar[r] & H_*^G(Z; \bR_2) & \cdots \ar[l]_-{\cong} & H_*^G(Z; \bE)\ar[l]_-{\cong}
        }\end{equation}
    Each arrow in \eqref{eq:zzaghomology} is natural in $Z$ since it is induced by a morphism of $\OrGFin$-spectra; see Remark \ref{rem:natHG}.
    Moreover, by Remark \ref{rem:natHG}, those arrows labelled with $\sim$ in \eqref{eq:mainThmZzag} induce natural isomorphisms in \eqref{eq:zzaghomology} and thus can be uniquely inverted. Upon inverting the natural isomorphisms in \eqref{eq:zzaghomology} we get a chain of natural transformations as follows:
    \begin{equation}\label{eq:beta}
        \xymatrix@C=1.7em{
        H_*^G(Z; \bF)\ar[r] & H_*^G(Z; \bR_1)\ar[r] & H_*^G(Z; \bR_2)\ar[r] & \cdots \ar[r] & H_*^G(Z; \bE)
        }
    \end{equation}
    Write $\beta_Z:H_*^G(Z;\bF)\to H_*^G(Z;\bE)$ for the composite of the morphisms above. It is clear that $\beta_Z$ is natural in $Z$ since each one of the morphisms appearing in \eqref{eq:beta} is.
    We claim, moreover, that $\beta_Z$ is an isomorphism. Since homology commutes with filtered unions, it suffices to prove this for $\GFin$-finite $Z$. The claim holds for $Z=G/N$ since, in this case, $\beta_Z$ is the isomorphism \eqref{eq:natAdjGroups} by Theorem \ref{thm:ZZAG}. Now we can continue as in the proof of Lemma \ref{lem:defiAlphaX}: the case $Z=G/H\times\Delta^n$ follows from homotopy invariance and the general case follows from excision upon considering the skeletal filtration of $Z$.
\end{proof}

Combining Lemma \ref{lem:defiAlphaX} and Lemma \ref{lem:homologiesCoincide} with the fact that homology commutes with filtered unions we get the main result of this paper:

\begin{thm}\label{thm:mainthm}
    Let $G$ be a group  satisfying \eqref{eq:hipG}. Let $A$ be an algebra, $B$ be a $G$-algebra and $Z$ be a $\GFin$-complex. Then there is a natural isomorphism
    \[H^G_*(Z; \bbK(A, \cR(B\rtimes -)))\cong \colim_{\substack{X\subseteq Z\\\text{$G$-finite}}} \kk^G_*(A^{(X)}, B)\]
    induced by a natural zig-zag of spectra.
\end{thm}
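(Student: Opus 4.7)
The plan is to assemble the statement directly from the two preceding lemmas together with the compatibility of equivariant homology with filtered colimits of inclusions.

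First I would apply Lemma \ref{lem:homologiesCoincide} to obtain a natural isomorphism
\[
H^G_*(Z;\bbK(A,\cR(B\rtimes -)))\cong H^G_*(Z;\bbK^G(A^{(-)},B))
\]
for the given $\GFin$-complex $Z$. Note that both sides are functors of $Z$ on the entire category of $\GFin$-complexes, and the isomorphism is induced by the natural zig-zag of $\OrGFin$-spectra furnished by Theorem \ref{thm:ZZAG} (concatenated with the $G$-stability zig-zag as in the proof of Lemma \ref{lem:homologiesCoincide}).

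Next I would write $Z=\colim_{X\subseteq Z}X$ as the filtered colimit of its $G$-finite subcomplexes with respect to inclusions. Since the functor $H^G(-;\bE)$ of \eqref{eq:Ghomology} is defined as a coend of smash products and since smash products, coends, fixed points and homotopy groups all commute with filtered colimits of monomorphisms, we get
\[
H^G_*(Z;\bbK^G(A^{(-)},B))\cong \colim_{\substack{X\subseteq Z\\ \text{$G$-finite}}}H^G_*(X;\bbK^G(A^{(-)},B)).
\]
This step is purely formal and relies only on the general machinery of \cite{dalu}.

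Finally, for each $G$-finite $X$, Lemma \ref{lem:defiAlphaX} provides a weak equivalence $\alpha_X:H^G(X;\bbK^G(A^{(-)},B))\to \bbK^G(A^{(X)},B)$ that is natural in $X$. Passing to homotopy groups and taking the colimit over the filtered poset of $G$-finite subcomplexes of $Z$, naturality of $\alpha_X$ yields an isomorphism
\[
\colim_{\substack{X\subseteq Z\\ \text{$G$-finite}}}H^G_*(X;\bbK^G(A^{(-)},B))\cong \colim_{\substack{X\subseteq Z\\ \text{$G$-finite}}}\kk^G_*(A^{(X)},B).
\]
Composing the three displayed isomorphisms gives the desired natural identification, and the underlying zig-zag of spectra is the one obtained by concatenating the zig-zag of Lemma \ref{lem:homologiesCoincide} with the map $\alpha_{(-)}$ applied objectwise over the filtered diagram of $G$-finite subcomplexes.

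Since all of the genuine technical content has already been absorbed into Theorem \ref{thm:ZZAG} and Lemmas \ref{lem:defiAlphaX} and \ref{lem:homologiesCoincide}, I do not expect any serious obstacle at this stage; the only thing to keep track of is that every isomorphism in the chain is genuinely natural in $Z$ (and that the indexing category of $G$-finite subcomplexes is filtered, so that the colimit behaves well with the homology functor), both of which are immediate from the constructions in the cited lemmas.
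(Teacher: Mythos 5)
Your proof is correct and follows the paper's argument exactly: the paper proves Theorem \ref{thm:mainthm} by citing Lemma \ref{lem:homologiesCoincide}, Lemma \ref{lem:defiAlphaX}, and the compatibility of equivariant homology with filtered colimits of $G$-finite subcomplexes, which is precisely the chain of three isomorphisms you wrote out. You have simply expanded the one-line proof into its constituent steps.
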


\begin{rem}\label{rem:mainthm}
    Let $G$ be a group  satisfying \eqref{eq:hipG} and let $B$ be a $G$-algebra. Define an $\OrG$-spectrum $\bKH_B$ by $\bKH_B(G/H):=\bbK(\ell,\cR(B\rtimes G/H))$ and note that it satisfies \eqref{eq:KHBproperty}. By Theorem \ref{thm:mainthm}, for every $\GFin$-complex $Z$ we have:
    \[H^G_*(Z;\bKH_B)\cong \colim_{\substack{X\subseteq Z \\ \text{$G$-finite}}}\kk^G_*(\ell^{(X)}, B)\]
    When taking $Z=\EFinG$, this shows that the domain of the $\KH$-assembly map can be expressed in terms of $\kk^G$-groups in a way that is completely analogous to that of the Baum-Connes assembly map.
\end{rem}

\section{Towards a \texorpdfstring{$\kk$}{kk}-theoretic assembly map}\label{sec:kktheoreticASS}

\subsection{The Baum-Connes assembly map}\label{sec:BCass}
Let us briefly recall the definition of the Baum-Connes assembly map as formulated in \cite{bacohi}*{Section 9}. Fix a $G$-$C^*$-algebra $B$ and write $B\rtimes_rG$ for the reduced crossed product. Let $\EFinG$ be a topological model of the classifying space of $G$ with respect to $\FIN$. To every proper $G$-compact $G$-space $X$ one associates a canonical class $e_X\in \KK(\C, C_0(X)\rtimes G)$, where $C_0(X)$ denotes the algebra of continuous functions $X\to\C$ that vanish at infinity. Now consider the composites:
\[
    \xymatrix{
    \KK^G_*(C_0(X),B)\ar[r]^-{-\rtimes_rG} & \KK_*(C_0(X)\rtimes_r G,B\rtimes_r G)\ar[r]^-{e_X^*} & \KK_*(\C,B\rtimes_rG)
    }
\]
As $X$ varies over the closed $G$-compact subsets of $\EFinG$, these maps are compatible and define the \emph{Baum-Connes assembly map}:
\begin{equation}\label{eq:BCassKK}
    \colim_{\substack{X\subseteq \EFinG \\ \text{$G$-compact}}}\KK_*^G(C_0(X), B)\to \KK_*(\C, B\rtimes_rG)\cong \Ktop_*(B\rtimes_r G)
\end{equation}
The canonical class $e_X$ can be obtained as we proceed to explain. Let $C_c(X)$ denote the algebra of continuous functions $X\to\C$ with compact support. A \emph{cut-off function} for $X$ is a nonnegative $f\in C_c(X)$ such that $\sum_{g\in G}f(g\cdot x)=1$ for all $x\in X$; such functions always exist for proper and $G$-compact $X$. Let $f$ be a cut-off function for $X$ and define $p_f\in C_c(G\times X)$ by:
\[p_f(g, x):=\sqrt{f(x)f(g^{-1}\cdot x)}\]
If we consider $p_f \in C_0(X)\rtimes G$, then $p_f$ is a projection and its class $e_X=[p_f]\in \KK_0(\C, C_0(X)\rtimes G)$ is independent of the choice of $f$ \cite{land}*{Lemma 2.2}.

\begin{exa}
    Let $G$ be a (discrete) group and let $H\subseteq G$ be a finite subgroup. For every $uH\in G/H$ we have a cut-off function $f_{uH}:=\frac{1}{|H|}\chi_{uH}\in C_c(G/H)$. The corresponding projection is:
    \begin{equation}\label{eq:puH}p_{uH}:=\frac{1}{|H|}\chi_{uH}\rtimes \sum_{h\in H}uhu^{-1}\in C_0(G/H)\rtimes G\end{equation}
\end{exa}

\subsection{The \texorpdfstring{$\KH$}{KH}-assembly map}
Let $G$ be a group  satisfying \eqref{eq:hipG} and let $B$ be a $G$-algebra. Taking Remark \ref{rem:mainthm} into account, it is a natural question whether the $\KH$-assembly map admits a $\kk$-theoretic description analogous to \eqref{eq:BCassKK}. We could expect to find, for each $\GFin$-finite complex $X$, a natural class $e_X\in\kk_0(\ell, \ell^{(X)}\rtimes G)$ such that the composites
\[
    \xymatrix{
    \kk^G_*(\ell^{(X)},B)\ar[r]^-{-\rtimes G} & \kk_*(\ell^{(X)}\rtimes G,B\rtimes G)\ar[r]^-{e_X^*} & \kk_*(\ell,B\rtimes G)
    }
\]
are natural in $X$ and define a morphism $\mathcal{A}$ making the following diagram commute:
\[
    \xymatrix{H_*^G(\EFinG; \bbK(\ell, \cR(B\rtimes -))) \ar[r]^-{\pr} \ar@{-->}[d]_-{\text{Thm. \ref{thm:mainthm}}}^-{\cong} & \kk_*(\ell, B\rtimes G)\cong\KH_*(B\rtimes G)\\
    \displaystyle\colim_{\substack{X\subseteq \EFinG \\ \text{$G$-compact}}} \kk_*^G(\ell^{(X)},B) &
    \ar@/_1pc/ "2,1";"1,2"+<-2.8em,-0.8em>_-{\mathcal{A}}
    }
\]
The authors have no results in this generality although it is clear how to define $e_X$ for $X=G/H$, as we explain below.

Let $H\subseteq G$ be a finite subgroup. The formula \eqref{eq:puH} defines an idempotent
\begin{equation}\label{eq:pH}
    p_{uH}=\frac{1}{|H|}\chi_{uH}\rtimes\sum_{h\in H}uhu^{-1}\in\ell^{(G/H)}\rtimes G
\end{equation}
and thus an element $[p_{uH}]\in\kk_0(\ell, \ell^{(G/H)}\rtimes G)$. It is immediate that the latter does not depend upon $uH$ since we have:
\[(1\rtimes g)p_H(1\rtimes g^{-1})=p_{uH}\in \ell^{G/H}\rtimes G\]

\begin{lem}
    Let $G$ be a group satisfying \eqref{eq:hipG} and let $B$ be a $G$-algebra. Let $H\subseteq G$ be a finite subgroup and let $p_H\in \ell^{(G/H)}\rtimes G$ be the idempotent defined in \eqref{eq:pH}. Then the following diagram commutes:
    \begin{equation}\label{eq:lemapH}
        \begin{gathered}
            \xymatrix{
            H_*^G(G/H; \bbK(\ell, \cR(B\rtimes -))) \ar[r]^-{\pr} \ar@{-->}[d]_-{\text{Thm. \ref{thm:mainthm}}}^-{\cong} & \kk_*(\ell, B\rtimes G)\cong\KH_*(B\rtimes G)\\
            \kk_*^G(\ell^{(G/H)},B) &
            \ar@/_1pc/ "2,1";"1,2"+<-2.8em,-0.8em>_-{(-\circ [p_{H}])\circ (- \rtimes G)}
            }
        \end{gathered}
    \end{equation}
\end{lem}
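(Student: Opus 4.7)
The plan is to compute both paths of \eqref{eq:lemapH} on an element $\alpha \in \kk^G_*(\ell^{(G/H)}, B)$ and verify they agree in $\kk_*(\ell, B\rtimes G)$. Under the canonical identification $H^G_*(G/H;\bbK(\ell,\cR(B\rtimes -)))\cong\pi_*\bbK(\ell,\cR(B\rtimes G/H)) = \kk_*(\ell,\cR(B\rtimes G/H))$, the top arrow $\pr$ becomes the map $\pr_*:\kk_*(\ell,\cR(B\rtimes G/H))\to \kk_*(\ell, B\rtimes G)$ induced by applying $\bbK(\ell,\cR(B\rtimes -))$ to the morphism $G/H\to G/G$ in $\OrG$---recalling that $\cR(B\rtimes G/G)=B\rtimes G$. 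By Theorem \ref{thm:ZZAG}, the vertical isomorphism of \eqref{eq:lemapH} is precisely the inverse of the adjunction of Theorem \ref{thm:adj}, so the element of $\kk(\ell,\cR(B\rtimes G/H))$ corresponding to $\alpha$ is
\[\tilde\alpha = \nu_H\circ(\alpha\rtimes H)\circ\eta_\ell,\]
where $\eta_\ell:\ell\to\ell^{(G/H)}\rtimes H$ is the unit from Proposition \ref{prop:adj} and $\nu_H:B\rtimes H\to\cR(B\rtimes G/H)$ is the natural isomorphism of Lemma \ref{lem:natisocompoGH}.

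The key calculation will be that $\pr_*\circ\nu_H:B\rtimes H\to B\rtimes G$ equals the standard inclusion $\incl$ in $\kk$. Indeed, $\nu_H$ is induced by the inclusion of the full subcategory on the object $H$, namely $B\rtimes H=\cR(B\rtimes H)\hookrightarrow\cR(B\rtimes G/H)$, while $\pr_*$ is induced by the linear functor $B\rtimes G/H\to B\rtimes G/G = B\rtimes G$ associated with $G/H\to G/G$, which on hom-modules embeds $B\otimes\ell[vHu^{-1}]$ into $B\otimes\ell[G]$. Composing yields the standard inclusion $B\rtimes H\hookrightarrow B\rtimes G$.

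Combining this with the naturality of $\incl$ as a transformation from $(-\rtimes H)\circ\Res^G_H$ to $-\rtimes G$ on $\kk^G$---which follows from its naturality at the algebra level together with the universal property of $\kk^G$---we obtain
\[\pr_*\tilde\alpha = (\alpha\rtimes G)\circ\incl\circ\eta_\ell \in \kk(\ell, B\rtimes G).\]
A direct computation then shows
\[\incl(\eta_\ell(1)) = \chi_H\rtimes \tfrac{1}{|H|}\sum_{h\in H}h = p_H \in \ell^{(G/H)}\rtimes G,\]
so $\incl\circ\eta_\ell$ coincides with $[p_H]$, yielding $\pr_*\tilde\alpha = (\alpha\rtimes G)\circ[p_H]$ as required.

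The main point requiring care is identifying the Main Theorem isomorphism---defined through a long zig-zag of spectra---with the concrete adjunction of Theorem \ref{thm:adj} on a single orbit $G/H$. This identification is exactly what Theorem \ref{thm:ZZAG} provides, and once it is in place the remaining steps amount to a routine diagram-chase at the level of linear categories and algebras.
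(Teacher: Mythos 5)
Your proof is correct, and it takes a genuinely different and cleaner route than the paper's. Both you and the paper begin by invoking Theorem \ref{thm:ZZAG} to identify the vertical isomorphism of \eqref{eq:lemapH} with the adjunction \eqref{eq:natAdjGroups}; the divergence is in how that adjunction is then unwound. The paper uses the \emph{counit} description from Lemma \ref{lem:natcounit}~\eqref{item:couniadj}, which means it has to thread the $G$-stability zig-zag \eqref{eq:zigzag} (the $\iota,\iota'$ maps) through the diagram \eqref{eq:lemapHd2}, reduce to degree $n=0$, pick explicit algebra-homomorphism representatives, and verify the resulting formulas agree via conjugation by the matrices $V$ from \eqref{eq:V} and $V_H$ from \eqref{eq:vsH}. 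You instead use the \emph{unit} $\eta_\ell$ of Proposition~\ref{prop:adj}: once one observes that $\pr_*\circ\nu_H = \incl$ at the level of linear categories, that $\incl$ extends to a natural transformation $\overline{-\rtimes G/H}\to\overline{-\rtimes G/G}$ of functors $\kk^G\to\kk$ (this is the Proposition following Proposition~\ref{prop:existencertimesGH}, phrased via the $\nu$'s, and ultimately rests on the $\delta$-functor universal property in Theorem~\ref{thm:molesto}), and that $\incl\circ\eta_\ell:\ell\to\ell^{(G/H)}\rtimes G$ is literally the corner inclusion $1\mapsto p_H$, the claim drops out by formal naturality, in all $\kk$-degrees at once and with no matrix conjugations or choice of representatives. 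The paper's approach is more hands-on and keeps every step concretely verifiable against the zig-zag machinery of Section~\ref{sec:bomba}; your approach is shorter and exposes the conceptual content — namely that the canonical class $[p_H]$ is nothing but the unit of the Green--Julg/induction-restriction adjunction pushed into $\ell^{(G/H)}\rtimes G$. One small caveat worth making explicit in a write-up: the naturality of $\incl$ as a transformation of functors $\kk^G\to\kk$ does need the $\delta$-functor extension argument (not just naturality in $\GAlgl$), since a general morphism of $\kk^G$ is not represented by an algebra homomorphism; your phrase ``together with the universal property of $\kk^G$'' correctly gestures at this, but it deserves a citation to Theorem~\ref{thm:molesto} or to the Proposition after Proposition~\ref{prop:existencertimesGH}.
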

\begin{proof}
    For every algebra $A$ we have a morphism $p_H\otimes A:A\to A^{(G/H)}\rtimes G$. We will prove that the following diagram commutes for every algebra $A$; note that the commutativity of the outer square for $A=\ell$ is equivalent to that of \eqref{eq:lemapH}.
    \begin{equation}\label{eq:lemapHd1}
        \begin{gathered}
            \xymatrix@C=3em{
            H^G_*(G/H; \bbK(A, \cR(B\rtimes -)))\ar[dd]^-{\cong}_-{\text{Thm. \ref{thm:mainthm}}}\ar[rr]^-{\pr} & & H^G_*(G/G; \bbK(A, \cR(B\rtimes -)))\ar[d]^-{\cong} \\
            & \save[]+<-2em,0em>*+{\kk_*(A, \cR(B\rtimes G/H))}\ar[dl]_-{\cong}^-{\eqref{eq:natAdjGroups}}\ar[r]^-{\pr}\ar@{<-}[ul]_-{\cong}\restore & \kk_*(A, \cR(B\rtimes G/G))\ar@{=}[d] \\
            \kk^G_*(A^{(G/H)}, B)\ar[rr]^-{(p_H\otimes A)^*\circ (-\rtimes G)} & & \kk_*(A, B\rtimes G)
            }
        \end{gathered}
    \end{equation}
    To see  that the triangle in \eqref{eq:lemapHd1} commutes, recall that the isomorphism of Theorem \ref{thm:mainthm} is induced by the zig-zag of $\OrGFin$-spectra from Theorem \ref{thm:ZZAG} and that the latter recovers the isomorphism \eqref{eq:natAdjGroups} upon taking homotopy groups. As the upper right square in \eqref{eq:lemapHd1} clearly commutes, we have to prove the commutativity of the lower right one. After identifying $\kk_*(A, \cR(B\rtimes G/H))\cong \kk_*(A, B\rtimes H)$ using \eqref{eq:jrtimesH} and unravelling the details of the isomorphism \eqref{eq:natAdjGroups}, we are left to show that the following diagram commutes for all $n$:
    \begin{equation}\label{eq:lemapHd2}
        \begin{gathered}
            \xymatrix@C=8em{
            \kk_n(A, B\rtimes H)\ar[r]^-{\incl_*}\ar[d]_-{(-)^{(G/H)}} & \kk_n(A, B\rtimes G)\ar[d]_-{\iota_*}^-{
            \scriptsize{{\begin{array}{@{}c@{}}
                            \cong \\
                            \eqref{eq:zigzag}
                        \end{array}}}
            } \\
            \ar[d]_-{(\psi_B)_*}^-{\eqref{eq:defipsi}}
            \kk^G_n(A^{(G/H)}, (R\rtimes H)^{(G/H)}) & \kk_n(A, (M_{G_\plus}B)\rtimes G) \\
            \kk^G_n(A^{(G/H)}, M_GB)\ar[r]^-{(p_H\otimes A)^*\circ (-\rtimes G)} & \kk_n(A, (M_GB)\rtimes G)\ar[u]^-{(\iota')_*}_-{
            \scriptsize{{\begin{array}{@{}c@{}}
                            \cong \\
                            \eqref{eq:zigzag}
                        \end{array}}}
            }
            }
        \end{gathered}
    \end{equation}
    It suffices to consider the case $n=0$ since the general case follows upon replacing $A$ by $\Sigma^nA$ if $n>0$ and by $\Omega^{-n}A$ if $n<0$; see \cite{cortho}*{Corollary 6.4.2}.
    Let $\alpha\in\kk(A, B\rtimes H)$ and put:
    \begin{align*}
        \alpha_1 & :=\left(\iota_*\circ\incl_*\right)(\alpha)                                                                 \\
        \alpha_2 & :=\left((\iota')_*\circ (p_H\otimes A)^*\circ (-\rtimes G)\circ (\psi_B)_*\circ (-)^{(G/H)}\right)(\alpha)
    \end{align*}
    We will show that $\alpha_1=\alpha_2$. Recall from \eqref{eq:R} that we have an isomorphism:
    \[(R_{G_\plus,B})_*:\kk(A, (M_{G_\plus}B)\rtimes G)\to \kk(A, M_{|G_\plus|}(B\rtimes G))\]
    We claim that $(R_{G_\plus,B})_*(\alpha_1)=(R_{G_\plus,B})_*(\alpha_2)$. First suppose that $\alpha$ is represented by an algebra homomorphism $f:A\to B\rtimes H$. Fix $a\in A$ and write:
    \[f(a)=\sum_{k\in H}b_k\rtimes k\]
    It is straightforward to verify that $(R_{G_+,B})_*(\alpha_i)$ is represented by the algebra homomorphism $f_i:A\to M_{|G_+|}(B\rtimes G)$, where $f_1$ and $f_2$ are given by:
    \begin{align*}
        f_1(a) & = \displaystyle\sum_{k\in H}e_{\plus,\plus}\otimes (b_k\rtimes k)                       \\
        f_2(a) & =\displaystyle\frac{1}{|H|}\sum_{p,t,k\in H}e_{p,t}\otimes (p\cdot b_k\rtimes pkt^{-1})
    \end{align*}
    An easy computation shows that
    \[(V_H\otimes 1)V^{-1} f_2 V (V^{-1}_H\otimes 1) =f_1\]
    where $V$ is defined in \eqref{eq:V} and $V_{H}$ in \eqref{eq:vsH}. Thus, $f_1$ and $f_2$ induce the same morphism in $\kk$.
    Suppose now that $\alpha$ is represented by an algebra homomorphism $f:\J^rA\to (B\rtimes H)^{S^r}$. Let $\Omega:\kk\to\kk$ denote the translation functor in $\kk$ and recall that we have natural isomorphisms as follows; see \cite{cortho}*{Lemma 6.3.11} and \cite{loopsthtpy}*{Lemma 7.10}:
    \[\kk(A, B)\cong \kk(\Omega^rA, \Omega^rB)\cong \kk(\J^rA, B^{S^r})\]
    Since these are compatible with all the morphisms appearing in \eqref{eq:lemapHd2}, we may replace $A$ by $\J^rA$, $B$ by $B^{S^r}$ and reduce to the case $r=0$ that we have already addressed.
\end{proof}

\appendix

\section{\texorpdfstring{$G$}{G}-Simplicial sets}\label{sec:Gsset}

Let $G$ be a group. We recall some definitions and properties concerning the $G$-simplicial sets. A \emph{$G$-simplicial set} is a simplicial set with a left action of $G$. We write $\S^G$ for the category of $G$-simplicial sets with equivariant morphisms. Every $G$-simplicial set $X$ has a skeletal filtration such that the $n$-skeleton $\sk_nX$ is obtained from $\sk_{n-1}X$ upon attaching cells of the form $G/H\times \Delta^n$ with $H$ a subgroup of $G$. We say that $X$ is \emph{$G$-finite} if $X$ can be built from a finite number of these cells; it is easily verified that $X$ is $G$-finite if and only if $G\backslash X$ is a finite simplicial set. Let $\cF$ be a nonempty family of subgroups of $G$ closed under conjugation and subgroups; we are interested in the family $\FIN$ of finite subgroups. A $G$-simplicial set $X$ is called a \emph{$\GF$-complex} if $X$ can be built from cells of the form $G/H\times\Delta^n$ with $H\in\cF$. The $\GF$-complexes are the cofibrant objects for a certain model structure on $\S^G$; see \cite{corel}*{Proposition 2.3}. A $G$-simplicial set $X$ is called \emph{$\GF$-finite} if it can be built from a finite number of cells of the form $G/H\times\Delta^n$ with $H\in\cF$. It is easily verified that $X$ is $\GF$-finite if and only if $X$ is a $G$-finite $\GF$-complex. In the rest of this section we gather some technical results that are used in Section \ref{sec:prin}.

\begin{lem}\label{lem:prodCat}
    Let $G$ be a group and let $\S_c\subset \S$ denote the full subcategory of connected simplicial sets. Let $G/H, G/K\in\OrG$ and $X,Y\in\S_c$. Then there is a natural isomorphism:
    \[\Hom_{\S^G}(G/H\times X, G/K\times Y)\cong \Hom_\OrG(G/H, G/K)\times \Hom_{\S}(X, Y)\]
    In other words, the full subcategory of $\S^G$ whose objects are $G/H\times X$ with $G/H\in\OrG$ and $X\in\S_c$ is equivalent to the product category $\OrG\times \S_c$.
\end{lem}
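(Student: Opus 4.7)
The plan is to establish the bijection by taking the two projections of a morphism $f\colon G/H\times X\to G/K\times Y$ and showing that each one factors in a canonical way.

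First I would introduce $f_1:=\pr_1\circ f\colon G/H\times X\to G/K$ and $f_2:=\pr_2\circ f\colon G/H\times X\to Y$; any $G$-equivariant morphism $f$ is determined by the pair $(f_1,f_2)$, and $f$ is equivariant if and only if both projections are (where $G/K$ carries the canonical left action and $Y$ carries the trivial one).

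Next, I would analyze each projection separately:
\begin{itemize}
\item For $f_2$: since $G$ acts trivially on $Y$ and transitively on $G/H$, the relation $f_2(g\cdot uH,x)=f_2(uH,x)$ shows that $f_2(uH,x)$ does not depend on $uH$. Thus there is a unique simplicial map $\bar{f}_2\colon X\to Y$ with $f_2(uH,x)=\bar{f}_2(x)$.
\item For $f_1$: fix $uH\in G/H$. The restriction $f_1(uH,-)\colon X\to G/K$ is a simplicial map from the connected simplicial set $X$ into the discrete (i.e.\ $0$-dimensional constant) simplicial set $G/K$. Every such map is constant, so there is a well-defined value $\bar{f}_1(uH)\in G/K$. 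Equivariance of $f_1$ translates into equivariance of $\bar{f}_1\colon G/H\to G/K$, i.e.\ $\bar{f}_1\in\Hom_\OrG(G/H,G/K)$.
\end{itemize}

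This yields a map
\[
\Phi\colon \Hom_{\S^G}(G/H\times X,G/K\times Y)\longrightarrow \Hom_\OrG(G/H,G/K)\times \Hom_\S(X,Y),\qquad f\mapsto(\bar{f}_1,\bar{f}_2).
\]
I would then write down the obvious inverse
\[
\Psi(\phi,\psi)(uH,x):=(\phi(uH),\psi(x)),
\]
which is manifestly $G$-equivariant, and check $\Phi\circ\Psi=\id$ and $\Psi\circ\Phi=\id$ by unwinding the definitions.

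Finally, naturality in $G/H$, $G/K$, $X$, and $Y$ will be immediate from the formulas for $\Phi$ and $\Psi$, since pre- and post-composition with equivariant maps of orbits or simplicial maps between connected spaces commute with taking projections. I do not expect a real obstacle here; the only subtle point is the step where connectedness of $X$ forces $f_1(uH,-)$ to be constant into the discrete simplicial set $G/K$, and equivariance plus transitivity forces $f_2$ to be independent of the first coordinate. Once these two observations are in place, the rest is formal.
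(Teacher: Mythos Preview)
Your argument is correct, and it is organized a bit differently from the paper's. The paper views $G/K\times Y$ as the coproduct $\bigsqcup_{G/K}Y$ and uses connectedness of both $X$ and $Y$: connectedness of $Y$ identifies the connected components of the target with the cosets $uK$, and connectedness of $X$ then forces $f\circ\can_H$ to land in a single component $\can_{uK}(Y)$, producing $uK$ and $h\colon X\to Y$; equivariance then gives the orbit map $tH\mapsto tuK$. You instead project to the two factors of the target: transitivity of the $G$-action on $G/H$ (together with the trivial action on $Y$) makes $\bar f_2$ well defined, and connectedness of $X$ (together with discreteness of $G/K$) makes $\bar f_1$ well defined. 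Your route is slightly more elementary and in fact never uses that $Y$ is connected, so it proves a marginally stronger statement; the paper's version has the small advantage of making the coproduct picture explicit, which is how the lemma is applied later (in the proof of Lemma~\ref{lem:defiAlphaX}).
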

\begin{proof}
    Let $f:G/H\times X\to G/K\times Y$ be a morphism in $\S^G$. We claim that there exist a unique coset $uK\in G/K$ and a unique morphism $h:X\to Y$ that fit into a commutative square as follows, as we proceed to explain.
    \begin{equation}\label{eq:prodcat}\begin{gathered}\xymatrix{\displaystyle\bigsqcup_{G/H} X\ar[r]^-{f} &
            \displaystyle\bigsqcup_{G/K} Y\\
            X\ar[u]^-{\can_H}\ar[r]^-{h} &
            Y\ar[u]_-{\can_{uK}}}\end{gathered}\end{equation}
    Since $Y$ is a connected simplicial set, the set of connected components of $\bigsqcup_{G/K}Y$ is $\{\can_{uK}(Y)\}_{uK\in G/K}$. Since $X$ is a connected simplicial set, there is a unique connected component $\can_{uK}(Y)$ of $\bigsqcup_{G/K}Y$ such that $(f\circ\can_H)(X)\subseteq \can_{uK}(Y)$. Now define $h$ as the composite:
    \[\xymatrix@C=3em{X\ar[r]^-{f\circ \can_H} & \can_{uK}(Y)\ar[r]^-{(\can_{uK})^{-1}}_-{\cong} & Y}\]
    It is clear that $h$ makes the square \eqref{eq:prodcat} commute.
    Moreover, it follows from the equivariance of $f$ that $g(tH)=tuK$ defines a morphism $g:G/H\to G/K$. Conversely, every pair $(g,h)\in \Hom_\OrG(G/H, G/K)\times \Hom_\S(X, Y)$ defines a unique $G$-equivariant morphism $f$ making the following squares commute for all $t$:
    \[\xymatrix{\displaystyle\bigsqcup_{G/H} X\ar[r]^-{f} &
        \displaystyle\bigsqcup_{G/K} Y\\
        X\ar[u]^-{\can_{tH}}\ar[r]^-{h} &
        Y\ar[u]_-{\can_{g(tH)}}}\]
    It is easily verified that both constructions are mutually inverse.
\end{proof}

\begin{lem}\label{lem:gfcofi}
    Let $\cF$ be a family of subgroups of $G$ and let $\psi:Y\to X$ be a morphism of $G$-simplicial sets. If $X$ is a $\GF$-complex then $Y$ is a $\GF$-complex too.
\end{lem}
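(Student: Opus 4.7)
The plan is to use the intrinsic characterization of $\GF$-complexes recalled in Section \ref{sec:isoconj}: a $G$-simplicial set $Z$ is a $\GF$-complex if and only if the stabilizer $\Stab(\sigma)$ of every simplex $\sigma$ of $Z$ lies in $\cF$. Once this characterization is in hand, the lemma becomes a one-line observation about equivariant maps.

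Concretely, I would proceed as follows. Let $\sigma \in Y_n$ be an arbitrary simplex, and consider its image $\psi(\sigma) \in X_n$. Since $X$ is a $\GF$-complex, we have $\Stab(\psi(\sigma)) \in \cF$. For any $g \in \Stab(\sigma)$, the $G$-equivariance of $\psi$ gives
\[
g \cdot \psi(\sigma) = \psi(g \cdot \sigma) = \psi(\sigma),
\]
so $g \in \Stab(\psi(\sigma))$. Hence $\Stab(\sigma) \subseteq \Stab(\psi(\sigma))$. Because $\cF$ is closed under taking subgroups, this forces $\Stab(\sigma) \in \cF$. As $\sigma$ was arbitrary, $Y$ is a $\GF$-complex.

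There is no real obstacle here: the only subtlety is invoking the equivalence between the cell-theoretic definition of $\GF$-complex (built from cells $G/H \times \Delta^n$ with $H \in \cF$) and the pointwise-stabilizer characterization, but this equivalence is already stated in Section \ref{sec:isoconj} and so may be used freely. No induction on skeleta or explicit cell decomposition of $Y$ is required, which is what makes this lemma convenient in applications.
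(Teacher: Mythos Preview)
Your proof is correct and follows exactly the same approach as the paper: the paper's proof is the single line ``Let $\sigma\in Y_n$; it is easily verified that $\Stab(\sigma)\subseteq\Stab(\psi(\sigma))\in\cF$,'' which is precisely the argument you spelled out in detail.
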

\begin{proof}
    Let $\sigma\in Y_n$; it is easily verified that $\Stab(\sigma)\subseteq\Stab(\psi(\sigma))\in\cF$.
\end{proof}

\begin{lem}\label{lem:proper}
    Let $G$ be an infinite group, let $H\subseteq G$ be a subgroup, let $X$ be a $\GFin$-complex and let $K$ be a finite simplicial set. Then every $G$-equivariant morphism $\psi:G/H\times K\to X$ is proper, i.e. $\psi^{-1}(L)$ is a finite simplicial set for every finite simplicial subset $L\subseteq X$.
\end{lem}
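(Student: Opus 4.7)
The plan is to show that $\psi^{-1}(L)$ has only finitely many non-degenerate simplices, which is what finiteness of a simplicial set means in this paper (cf.\ Lemma \ref{lem:cortholX}). First I would observe that $G/H$, regarded as a $0$-dimensional simplicial set, is constant: $(G/H)_n = G/H$ with all face and degeneracy operators equal to the identity. Consequently the degeneracy operators in the product $G/H\times K$ act only on the $K$-coordinate, so a simplex $(uH,\tau)\in(G/H\times K)_n = G/H\times K_n$ is non-degenerate if and only if $\tau\in K_n$ is non-degenerate. Since $K$ is finite, it has only finitely many non-degenerate simplices, and the problem reduces to bounding, for each non-degenerate $\tau$, the set $\{uH\in G/H:\psi(uH,\tau)\in L\}$.

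Next I would exploit $G$-equivariance to reduce this to an orbit--stabilizer count. Setting $\sigma_0 := \psi(H,\tau)\in X_n$, equivariance gives $\psi(uH,\tau) = u\cdot\sigma_0$ for every $u\in G$; hence $\psi(uH,\tau) = \sigma$ amounts to $u$ taking $\sigma_0$ to $\sigma$. If some $u_0$ does so, the set of all such $u$ is the left coset $u_0\Stab(\sigma_0)$, and the assignment $u\mapsto uH$ collapses this to a set of $H$-cosets of cardinality at most $|\Stab(\sigma_0)|$. The decisive input is that $X$ is a $\GFin$-complex, which guarantees that $\Stab(\sigma_0)$ is finite; in particular, if $H$ is itself infinite then $H\subseteq \Stab(\sigma_0)$ forces the set of solutions to be empty, so $\psi^{-1}(L)$ is trivially finite in that case.

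To conclude I would note that $L$ being a finite simplicial set implies each $L_n$ is finite (every non-degenerate simplex of $L$ contributes only finitely many degenerate simplices in any given degree $n$, and there are finitely many non-degenerate generators). Summing the bound $|\Stab(\sigma_0)|$ over $\sigma\in L_n$ yields a finite count of $uH$'s for each non-degenerate $\tau$, and summing over the finite set of non-degenerate $\tau\in K$ finishes the proof. The main technical point is the orbit--stabilizer collapse combined with the finiteness of stabilizers coming from the $\GFin$-complex hypothesis; no real obstacle arises, but one must be careful to count non-degenerate simplices rather than simplices of each fixed degree, so that the finiteness of $K$ (which generally has infinitely many simplices) and of $L$ can both be invoked.
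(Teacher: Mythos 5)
Your proof is correct and rests on the same key observation as the paper's: by equivariance $\psi(uH,\tau)=u\cdot\psi(H,\tau)$, and the stabilizer $\Stab(\psi(H,\tau))$ is finite because $X$ is a $\GFin$-complex, which bounds the number of cosets $uH$ landing on any fixed simplex of $L$. The paper packages this as a proof by contradiction (a double pigeonhole producing infinitely many distinct $g_i^{-1}g_{i_0}$ in a finite stabilizer), while you do a direct orbit--stabilizer count; the underlying idea is the same, and your handling of the case $H$ infinite plays the role that the paper's appeal to Lemma \ref{lem:gfcofi} plays in showing $H$ is finite.
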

\begin{proof}
    First of all notice that $H\in\FIN$ by Lemma \ref{lem:gfcofi}. Let $L\subseteq X$ be a finite simplicial set and suppose that $\psi^{-1}(L)$ is not finite. Then there is an infinite number of non-degenerate simplices in $\psi^{-1}(L)\subseteq G/H\times K$. Since every non-degenerate simplex of $G/H\times K$ has dimension $\leq d:=\dim K$, there exists $0\leq p\leq d$ such that there is an infinite number of non-degenerate $p$-simplices in $\psi^{-1}(L)$. Let $\{g_i,i\in I\}\subseteq G$ be a system of representatives for the cosets in $G/H$; notice that $I$ is infinite because $H$ is finite. Every non-degenerate $p$-simplex of $G/H\times K$ is of the form $(g_iH,\sigma)$ for some $i\in I$ and some non-degenerate $p$-simplex $\sigma$ of $K$. Since $K$ has finitely many non-degenerate $p$-simplices, there exist a non-degenerate $p$-simplex $\sigma$ of $K$ and an infinite subset $J\subseteq I$ such that $\{(g_iH,\sigma),i\in J\}\subseteq \psi^{-1}(L)$. Then
    \[\{\psi(g_iH,\sigma),i\in J\}\subseteq L_p.\]
    Since $L_p$ is a finite set, replacing $J$ by a smaller but still infinite subset, we can assume without loss of generality that there is $\tau\in L_p$ such that $\psi(g_iH,\sigma)=\tau$ for every $i\in J$. Fix $i_0\in J$. Then
    \[g_{i_0}\cdot\psi(H,\sigma)=\psi(g_{i_0}H,\sigma)=\tau=\psi(g_iH,\sigma)=g_i\cdot \psi(H,\sigma)\]
    for every $i\in J$ and it follows that $\{g_i^{-1}g_{i_0},i\in J\}\subseteq\Stab(\psi(H,\sigma))\in \FIN$; this is a contradiction since $J$ is infinite.
\end{proof}

\begin{lem}\label{lem:proper2}
    Let $X$ be a $\GFin$-complex and $Y$ be a $\GFin$-finite simplicial set. Then every morphism $\phi:Y\to X$ is proper.
\end{lem}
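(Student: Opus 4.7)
The plan is to proceed by induction on the number of cells needed to build $Y$ as a $\GFin$-complex, reducing the lemma to the single-cell case already handled by Lemma \ref{lem:proper}. Before entering the induction I would dispose of the case in which $G$ itself is finite: then every $H\in\FIN$ is finite, every cell $G/H\times \Delta^n$ has only finitely many non-degenerate simplices, and so a $\GFin$-finite $Y$ is simply a finite simplicial set. In that situation $\phi^{-1}(L)$ is a simplicial subset of a finite simplicial set and is therefore automatically finite.

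For the remainder, assume $G$ is infinite and write $N$ for the number of cells in a fixed $\GFin$-cellular decomposition of $Y$. The base case $N=0$, corresponding to $Y=\emptyset$, is trivial. For the inductive step I would express $Y$ as a pushout
\[\xymatrix{G/H\times \partial\Delta^n \ar[r]\ar[d] & Y'\ar[d] \\ G/H\times \Delta^n\ar[r]^-{\Phi} & Y}\]
with $H\in\FIN$ and $Y'$ a $\GFin$-finite simplicial set built from $N-1$ cells. Given a finite simplicial subset $L\subseteq X$, every simplex of $Y$ is either in the image of $Y'\to Y$ or in the image of $\Phi$, so
\[\phi^{-1}(L)\subseteq \bigl(\phi|_{Y'}\bigr)^{-1}(L)\,\cup\,\Phi\!\left(\bigl(\phi\circ\Phi\bigr)^{-1}(L)\right).\]
The first set on the right is finite by the inductive hypothesis applied to $\phi|_{Y'}:Y'\to X$; the second is the image under $\Phi$ of a finite simplicial set, since $\phi\circ\Phi:G/H\times \Delta^n\to X$ is $G$-equivariant with $\Delta^n$ a finite simplicial set and hence proper by Lemma \ref{lem:proper}. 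Thus $\phi^{-1}(L)$ is finite, as required.

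The principal thing to justify carefully is the pushout decomposition and the claim that every simplex of $Y$ lies in the image of $Y'$ or of $\Phi$; this is exactly what the definition of ``built from cells $G/H\times\Delta^n$'' provides, so no further work is needed. The only genuine input is Lemma \ref{lem:proper}, which handles the single-cell case and uses the finiteness of $H$ in an essential way; the induction simply packages finitely many applications of that lemma together with the inductive step on $Y'$.
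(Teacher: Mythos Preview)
Your proof is correct but organised differently from the paper's. The paper argues by contradiction: assuming $\phi^{-1}(K)$ is infinite, it observes that $\phi^{-1}(K)$ has bounded dimension, picks a dimension $n$ containing infinitely many non-degenerate preimage simplices, and uses the single skeletal pushout for $\sk_nY$ together with pigeonhole to locate infinitely many of these simplices in one cell $G/H_j\times\Delta^n$, contradicting Lemma~\ref{lem:proper}. You instead run a direct induction on the number of cells of $Y$, splitting $\phi^{-1}(L)$ into a contribution from the smaller complex $Y'$ (handled inductively) and one from the top cell (handled by Lemma~\ref{lem:proper}). Both routes are straightforward reductions to Lemma~\ref{lem:proper}; your induction is arguably cleaner and makes the dependence on the cell structure explicit, while the paper's pigeonhole argument avoids the inductive bookkeeping at the cost of a slightly less constructive presentation. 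Your separate treatment of finite $G$ is necessary and correctly handled, since Lemma~\ref{lem:proper} assumes $G$ infinite.
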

\begin{proof}
    Let $K\subseteq X$ be a finite simplicial subset and suppose that $\phi^{-1}(K)$ is not finite. Note that $\phi^{-1}(K)$ has finite dimension since $\phi^{-1}(K)\subseteq Y$ and $Y$ has finite dimension. It follows that, for some $n\geq 0$, $\phi^{-1}(K)$ has infinitely many non-degenerate $n$-simplices. Let $\{\sigma_k\}_{k\in\N}\subseteq \phi^{-1}(K)$ be a list of distinct non-degenerate $n$-simplices. Consider a pushout diagram
    \[\begin{gathered}\xymatrix{
                \bigsqcup_{i=1}^rG/H_i\times \partial\Delta^n\ar[r]\ar[d] & \sk_{n-1}Y\ar[d] \\
                \bigsqcup_{i=1}^rG/H_i\times \Delta^n\ar[r] &  \sk_nY\\
            }\end{gathered}
    \]
    where the $H_i$ are finite. For every $k\in\N$, there exist $1\leq i_k\leq r$ and a non-degenerate $n$-simplex $\tau_k$ of $G/H_{i_k}\times \Delta^n$ such that $\sigma_k$ is the image of $\tau_k$ under $G/H_{i_k}\times \Delta^n\to \sk_nY$. Then there exists some $1\leq j\leq r$ such that $i_k=j$ for infinitely many values of $k$. We may assume without loss of generality that $i_k=j$ for all $k$. Let $\psi$ be the composite:
    \[\xymatrix{G/H_j\times \Delta^n\ar[r] & \sk_nY\ar[r]^-{\incl} & Y\ar[r]^-{\phi} & X}\]
    Then $\{\tau_k\}_{k\in\N}$ is a list of distinct non-degenerate $n$-simplices of $\psi^{-1}(K)\subseteq G/H_j\times \Delta^n$. But the latter is not possible since $\psi$ is proper by Lemma \ref{lem:proper}.
\end{proof}

\begin{lem}\label{lem:fixcolim}
    Let $G$ be a group, $K\subseteq G$ be a subgroup and $X$ be a $G$-simplicial set. Then:
    \[X^K\cong \colim_{G/H\times\Delta^n\downarrow X}(G/H\times\Delta^n)^K\]
\end{lem}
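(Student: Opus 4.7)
The plan is to verify the isomorphism simplicial degree by degree. Since colimits of simplicial sets are computed level-wise in $\mathrm{Set}$, and since $(G/H\times\Delta^n)^K$ decomposes as $(G/H)^K\times\Delta^n$, the claim reduces to constructing, for each $m\geq 0$, a natural bijection
\[
X^K_m \;\cong\; \colim_{G/H\times\Delta^n\downarrow X}\bigl((G/H)^K\times(\Delta^n)_m\bigr).
\]
Here the elements of the right-hand side are equivalence classes of triples $(f,uH,\tau)$ where $f:G/H\times\Delta^n\to X$ is equivariant, $uH\in(G/H)^K$ (equivalently $K\subseteq uHu^{-1}$) and $\tau\in(\Delta^n)_m$, modulo the relations imposed by morphisms in the slice category.

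First I would define a map $\Phi$ from the colimit to $X^K_m$ by $\Phi[f,uH,\tau]:=f(uH,\tau)$. Equivariance of $f$ together with the condition $K\subseteq\Stab(uH)$ guarantees that $f(uH,\tau)$ lies in $X^K_m$, and the naturality of $f$ makes $\Phi$ compatible with both slice-category morphisms and simplicial operators, so $\Phi$ is well defined.

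For the candidate inverse $\Psi$, I would invoke the standard adjunction $\Hom_{\S^G}(\Delta^m\times G/K,X)\cong\Hom_\S(\Delta^m,X^K)=X^K_m$: each $\sigma\in X^K_m$ corresponds to a unique equivariant map $\bar\sigma:G/K\times\Delta^m\to X$, and I set $\Psi(\sigma):=[\bar\sigma,eK,\mathrm{id}_{\Delta^m}]$. The identity $\Phi\circ\Psi=\mathrm{id}$ is then immediate from the relation $\bar\sigma(eK,\mathrm{id})=\sigma$.

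The main step will be to check $\Psi\circ\Phi=\mathrm{id}$. Given a representative $(f,uH,\tau)$ with image $\sigma=f(uH,\tau)$, I plan to exhibit an explicit morphism
\[
\phi:(G/K\times\Delta^m,\bar\sigma)\longrightarrow(G/H\times\Delta^n,f)
\]
in the slice category, defined by $\phi(gK,s):=(guH,\hat\tau(s))$, where $\hat\tau:\Delta^m\to\Delta^n$ is the simplicial map classifying $\tau$ via Yoneda. The hypothesis $K\subseteq uHu^{-1}$ is exactly what makes $\phi$ well defined on cosets, and a short computation (using that equivariant maps out of $G/K\times\Delta^m$ are determined by their restriction to $\{eK\}\times\Delta^m$) gives $f\circ\phi=\bar\sigma$. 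Since $\phi(eK,\mathrm{id}_{\Delta^m})=(uH,\tau)$, the existence of $\phi$ forces the equality $[f,uH,\tau]=[\bar\sigma,eK,\mathrm{id}_{\Delta^m}]=\Psi(\sigma)$ in the colimit. No serious obstacle is expected; the only care required is to keep track of how the colimit's equivalence relation is generated by morphisms in the slice category $G/H\times\Delta^n\downarrow X$.
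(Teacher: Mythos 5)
Your proof is correct, and it takes a genuinely cleaner route than the paper's. The paper argues surjectivity and injectivity of the canonical map separately: for surjectivity it sends a $K$-fixed simplex $\sigma$ to the class of $(\Stab(\sigma),\iota_p)$ in $(G/\Stab(\sigma)\times\Delta^p)^K$; for injectivity it first reduces every element of the colimit to a ``standard form'' $(L,\iota_p)$ and then shows two such with the same image $\sigma$ are glued through the middle object $G/\Stab(\sigma)\times\Delta^p$, a two-step zig-zag. You instead construct an explicit two-sided inverse $\Psi$ by invoking the adjunction $\Hom_{\S^G}(G/K\times\Delta^m,X)\cong X^K_m$, using $G/K$ itself rather than the stabilizer, and then check $\Psi\circ\Phi=\id$ with a single slice-category morphism $\phi(gK,s)=(guH,\hat\tau(s))$, whose well-definedness is exactly the fixed-point condition $K\subseteq uHu^{-1}$. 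This collapses the paper's normal-form reduction and stabilizer comparison into one move, and makes visible that the statement is really just the co-Yoneda density of the cells $G/H\times\Delta^n$ together with the representability of $(-)^K$ by $G/K$. Both proofs rest on the same structural facts (the product decomposition $(G/H\times\Delta^n)^K\cong(G/H)^K\times\Delta^n$, computing colimits levelwise, and the characterization of $K$-fixed cosets), so the difference is organizational rather than substantive, but your version is shorter and more conceptual.
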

\begin{proof}
    There is a natural morphism:
    \begin{equation}\label{eq:fixcolim}
        \colim_{G/H\times\Delta^n\downarrow X}(G/H\times\Delta^n)^K\to X^K\end{equation}
    Let us prove that it is surjective. Let $\sigma\in(X^K)_p=(X_p)^K$ and put $H:=\Stab(\sigma)$; note that we have $K\subseteq H$. Let $f:G/H\times\Delta^p\to X$ be the $G$-equivariant morphism determined by $(H, \iota_p)\mapsto \sigma$. Since $(H,\iota_p)\in(G/H\times \Delta^p)^K$, we have that $\sigma = f^K(H, \iota_p)$, showing that $\sigma$ is in the image of \eqref{eq:fixcolim}. We still have to prove that \eqref{eq:fixcolim} is injective. Let us first show that every $p$-simplex of
    \begin{equation}\label{eq:fixcolim2}\colim_{G/H\times\Delta^n\downarrow X}(G/H\times\Delta^n)^K\end{equation}
    is represented by one of the form $(L,\iota_p)\in (G/L\times \Delta^p)^K$. Let $f:G/H\times \Delta^n\to X$ be a $G$-equivariant morphism and let $(gH,\tau)$ be a $p$-simplex of $(G/H\times \Delta^n)^K$. Then $K\subseteq gHg^{-1}$ and $\tau=\tau_*(\iota_p)$ for some nondecreasing function $\tau:[p]\to [n]$. The commutativity of the following triangle implies that $(gH, \tau)$ and $(gH, \iota_p)$ represent the same simplex of \eqref{eq:fixcolim2}:
    \[\xymatrix{(gH, \tau) & G/H\times\Delta^n\ar[rr]^-{f} & & X \\
        (gH,\iota_p)\ar@{|->}[u] & G/H\times \Delta^p\ar[u]^-{\id\times \tau_*}\ar@/_1pc/[urr]_-{f\circ (\id\times \tau_*)} & &
        }\]
    Write $L:=gHg^{-1}$ and note that there is a $G$-equivariant bijection $\beta:G/H\to G/L$ determined by $\beta(H)=g^{-1}L$. The commutativity of the following triangle implies thata $(gH,\iota_p)$ and $(L,\iota_p)$ represent the same simplex of \eqref{eq:fixcolim2}:
    \[\xymatrix{(gH, \iota_p)\ar@{|->}[d] & G/H\times\Delta^p\ar[d]^-{\cong}_-{\beta\times \id}\ar[rr]^-{f\circ (\id\times \tau_*)} & & X \\
        (L,\iota_p) & G/L\times \Delta^p\ar@/_1pc/[urr] & &
        }\]
    Now let $(L,\iota_p)$ and $(M,\iota_p)$ represent two $p$-simplices of \eqref{eq:fixcolim2} having the same image $\sigma\in(X^K)_p$ under \eqref{eq:fixcolim}. Put $S:=\Stab(\sigma)$ and note that $L,M\subseteq S$. The commutativity of the following diagram shows that $(L,\iota_p)$ and $(M,\iota_p)$ represent the same simplex of \eqref{eq:fixcolim}:
    \[\xymatrix{(L, \iota_p)\ar@{|->}[d] & G/L\times\Delta^p\ar[d]\ar@/^1pc/[drr] & & \\
        (S,\iota_p) & G/S\times \Delta^p\ar[rr] & & X
        \\
        (M,\iota_p)\ar@{|->}[u] & G/M\times\Delta^p\ar@/_1pc/[urr]\ar[u] & &}
    \]
    This finishes the proof.
\end{proof}

\section{A Mayer-Vietoris sequence in \texorpdfstring{$\kk$}{kk}-theory }

\begin{defi}
    A \emph{Milnor square of $G$-algebras} is a pullback square
    \[
        \xymatrix{A\ar[r]\ar[d] & B\ar[d]^-{f} \\
        C\ar[r] & D}
    \]
    where $f$ is surjective and has a $G$-linear section.
\end{defi}

\begin{lem}\label{lem:milnor}
    Let $A$ be an algebra and let $X$ be a $\GFin$-finite $G$-simplicial set. For each $n\geq 1$ there is a pushout diagram
    \begin{equation}\label{eq:poskeleta}\begin{gathered}\xymatrix{
                \bigsqcup_{i=1}^rG/H_i\times \partial\Delta^n\ar[r]\ar[d] & \sk_{n-1}X\ar[d] \\
                \bigsqcup_{i=1}^rG/H_i\times \Delta^n\ar[r] & \sk_nX\\
            }\end{gathered}
    \end{equation}
    of $G$-simplicial sets with $H_i\in \FIN$ for all $i$. Then all the morphisms appearing in \eqref{eq:poskeleta} are proper and this diagram induces a Milnor square of $G$-algebras:
    \begin{equation}\label{eq:pbskeleta}\begin{gathered}\xymatrix{
                A^{\left(\bigsqcup_{i=1}^rG/H_i\times \partial\Delta^n\right)} & A^{(\sk_{n-1}X)}\ar[l] \\
                A^{\left(\bigsqcup_{i=1}^rG/H_i\times \Delta^n\right)}\ar[u] & A^{(\sk_nX)}\ar[u]\ar[l]\\
            }\end{gathered}
    \end{equation}
\end{lem}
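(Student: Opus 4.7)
The plan is to dispatch the properness and pullback claims together, and then construct the required $G$-linear section by reducing to a purely local statement about $\Delta^n$ and $\partial\Delta^n$.

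First I would verify properness of the four morphisms in the pushout \eqref{eq:poskeleta}. The vertical arrows are inclusions of simplicial subsets, hence trivially proper. For each horizontal arrow, the domain $\bigsqcup_{i=1}^{r}G/H_i\times K$ (with $K=\partial\Delta^n$ or $K=\Delta^n$) is $\GFin$-finite because $H_i\in\FIN$ and finite disjoint unions of $\GFin$-finite complexes are $\GFin$-finite; meanwhile the codomain ($\sk_{n-1}X$ or $\sk_nX$) is a $\GFin$-complex as a $G$-invariant subcomplex of $X$ (Lemma \ref{lem:gfcofi}), so Lemma \ref{lem:proper2} applies. Once properness is established, Remark \ref{rem:polynomialfcoprod} immediately gives that $A^{(-)}$ takes the pushout \eqref{eq:poskeleta} to a pullback square in $\GAlgl$, namely \eqref{eq:pbskeleta}.

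It remains to upgrade this pullback to a Milnor square by exhibiting a $G$-equivariant $\ell$-linear splitting of the left vertical arrow
\[f:A^{\bigl(\bigsqcup_{i}G/H_i\times\Delta^n\bigr)}\longrightarrow A^{\bigl(\bigsqcup_{i}G/H_i\times\partial\Delta^n\bigr)}.\]
Applying Remark \ref{rem:polynomialfcoprod} twice (for the finite coproduct over $i$ and the coproduct $G/H_i=\bigsqcup_{uH_i}\{uH_i\}$) I would decompose
\[A^{\bigl(\bigsqcup_{i}G/H_i\times K\bigr)}\cong \bigoplus_{i}\bigoplus_{uH_i\in G/H_i}A^{K}\qquad (K=\partial\Delta^n\text{ or }\Delta^n),\]
using also that $A^{(K)}=A^{K}$ for these finite $K$. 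Under this identification $f$ becomes a direct sum of copies of the restriction $\rho:A^{\Delta^n}\to A^{\partial\Delta^n}$, and the $G$-action only permutes summands within each index $i$. Therefore any $\ell$-linear section of $\rho$ will extend termwise to a $G$-equivariant section of $f$. The existence of such a section of $\rho$ follows from Lemma \ref{lem:cortholX}: part~(2) gives $A^{\partial\Delta^n}\cong A\otimes_{\ell}\ell^{\partial\Delta^n}$ with $\ell^{\partial\Delta^n}$ free as an $\ell$-module, so the surjection $\ell^{\Delta^n}\to\ell^{\partial\Delta^n}$ from part~(1) splits $\ell$-linearly, and tensoring with $A$ produces the desired splitting.

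The most delicate point should be the bookkeeping with the $G$-action when the orbits $G/H_i$ are infinite. This will in fact be harmless: since $G$ acts trivially on each $\Delta^n$ and on the algebra $A$, the entire $G$-structure lives in the permutation of summands indexed by cosets, so any termwise section is automatically $G$-equivariant.
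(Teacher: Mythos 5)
Your proposal is correct and follows essentially the same route as the paper's proof: properness of the morphisms via Lemma \ref{lem:proper2}, the pullback property via Remark \ref{rem:polynomialfcoprod}, and a $G$-linear section obtained by tensoring an $\ell$-linear section of $A^{\Delta^n}\to A^{\partial\Delta^n}$ (which exists by Lemma \ref{lem:cortholX}) with the permutation modules $\ell^{(G/H_i)}$. The paper merely cites Lemma \ref{lem:proper2} for all four arrows and writes the decomposition as $\bigoplus_i\ell^{(G/H_i)}\otimes i^*$ rather than indexing over cosets, but these are the same observation phrased differently.
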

\begin{proof}
    All the morphisms in \eqref{eq:poskeleta} are proper by Lemma \ref{lem:proper2}. Then we can apply $A^{(-)}$ to get a commutative diagram of $G$-algebras like \eqref{eq:pbskeleta} that it is easily seen to be a pullback; see Remark \ref{rem:polynomialfcoprod}. Write $i:\partial\Delta^n\to\Delta^n$ for the inclusion. By Lemma \ref{lem:cortholX}, the morphism $i^*:A^{ \Delta^n}\to A^{\partial\Delta^n}$ admits a linear section. Then the left vertical morphism in \eqref{eq:pbskeleta} admits a $G$-linear section, since it identifies with
    \[\bigoplus_i \ell^{(G/H_i)}\otimes i^*:\bigoplus_i\ell^{(G/H_i)}\otimes A^{\Delta^n}\longrightarrow \bigoplus_i\ell^{(G/H_i)}\otimes A^{\partial\Delta^n}\]
    by Remark \ref{rem:polynomialfcoprod}.
\end{proof}

\begin{lem}\label{lem:excision}
    Let $E$ be a $G$-algebra. Then every Milnor square of $G$-algebras
    \[\xymatrix{A\ar[r]\ar[d] & B\ar[d] \\
            C\ar[r] & D}\]
    induces a long exact Mayer-Vietoris sequence:
    \[\xymatrix{\kk^G_*(D,E)\ar[r] & \kk^G_*(B,E)\oplus \kk^G_*(C,E)\ar[r] & \kk^G_*(A,E)\ar[r] & \kk^G_{*-1}(D,E)}
    \]
\end{lem}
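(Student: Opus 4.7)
\medskip\noindent
\emph{Proof plan.} The strategy is to reduce the Milnor square to a pair of extensions sharing a common kernel and then invoke excision in $\kk^G$ together with a Barratt–Whitehead-type argument. Denote the maps in the Milnor square by $i:A\to B$, $\pi:A\to C$, $\pi':B\to D$, $\iota:C\to D$, and write $s:D\to B$ for the given $G$-linear section of $\pi'$. Set $K:=\ker(\pi')$. The pullback property of $A$ identifies $K$ with $\ker(\pi)$ via $i$, so we obtain two short exact sequences of $G$-algebras
\[\mathcal{E}_1:\ K\longrightarrow A\longrightarrow C,\qquad \mathcal{E}_2:\ K\longrightarrow B\longrightarrow D.\]
Both split as sequences of $G$-modules: for $\mathcal{E}_2$ we use $s$ itself, and for $\mathcal{E}_1$ we use the $G$-linear map $C\to A$, $c\mapsto (s(\iota(c)),c)$, which lands in the pullback $A$ precisely because $\pi'(s(\iota(c)))=\iota(c)$. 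Thus $\mathcal{E}_1,\mathcal{E}_2$ are genuine extensions in $\GAlgl$ in the sense of Section \ref{subsec:excision}.

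By excision in $\kk^G$, these two extensions give rise to distinguished triangles, and the map of extensions (identity on $K$, the inclusion $i$ on the middle term, and $\iota$ on the quotient) induces a morphism of triangles
\[\xymatrix@C=2.5em{
\Omega C\ar[r]^-{\partial_1}\ar[d]_-{\Omega\iota} & K\ar@{=}[d]\ar[r] & A\ar[d]^-{i}\ar[r]^-{\pi} & C\ar[d]^-{\iota}\\
\Omega D\ar[r]^-{\partial_2} & K\ar[r] & B\ar[r]^-{\pi'} & D
}\]
in $\kk^G$. Applying the cohomological functor $\kk^G(-,E)$ and using that two adjacent vertices are identities, we obtain a commutative ladder of long exact sequences in which the columns indexed by $\kk^G_*(K,E)$ are isomorphisms (indeed identities):
\[\xymatrix@C=1.2em{
\cdots\ar[r] & \kk^G_{n+1}(K,E)\ar[r] & \kk^G_n(C,E)\ar[r] & \kk^G_n(A,E)\ar[r] & \kk^G_n(K,E)\ar[r] & \cdots\\
\cdots\ar[r] & \kk^G_{n+1}(K,E)\ar@{=}[u]\ar[r] & \kk^G_n(D,E)\ar[u]_-{\iota^*}\ar[r] & \kk^G_n(B,E)\ar[u]_-{i^*}\ar[r] & \kk^G_n(K,E)\ar@{=}[u]\ar[r] & \cdots
}\]

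At this point the conclusion follows from the standard Barratt–Whitehead (Mayer–Vietoris) lemma for a morphism of long exact sequences with one column an isomorphism: the resulting six-term braid extracts the long exact sequence
\[\cdots\to\kk^G_n(D,E)\xrightarrow{((\pi')^*,\,\iota^*)}\kk^G_n(B,E)\oplus\kk^G_n(C,E)\xrightarrow{i^*-\pi^*}\kk^G_n(A,E)\to\kk^G_{n-1}(D,E)\to\cdots\]
which is precisely the desired Mayer–Vietoris sequence. The main obstacle is bookkeeping rather than conceptual: one must check that the pullback description of $A$ produces a $G$-equivariant \emph{linear} section of $\mathcal{E}_1$ and, in step~3, that the connecting morphisms $\partial_1,\partial_2$ supplied by the excision axiom fit into a strict morphism of triangles (this is the naturality requirement built into the definition of an excisive functor in Section \ref{subsec:excision}). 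Once these verifications are in hand, the signs and directions in the Barratt–Whitehead diagram chase are purely formal.
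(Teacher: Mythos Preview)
Your proposal is correct and follows essentially the same approach as the paper: the paper's proof simply cites excision in $\kk^G$ together with the argument in \cite{ralf}*{Theorem 2.41}, and what you have written is precisely a spelled-out version of that standard argument (identify the common kernel of the two surjections in the Milnor square, produce the $G$-linear splitting of $A\to C$ from the given one of $B\to D$, apply excision and naturality to obtain a map of triangles, and run the Barratt--Whitehead diagram chase).
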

\begin{proof}
    It follows from excision in $\kk^G$ \cite{euge}*{Theorem 4.1.1}
    and from the argument explained in \cite{ralf}*{Theorem 2.41}.
\end{proof}

\section{The model category of spectra and spectra representing \texorpdfstring{$\kk$}{kk}-theory}\label{sec:mcs}

\subsection{The stable model category of spectra}

In this section we recall the definition of the stable model category of spectra and discuss some of its properties.

\begin{defi}[\cite{bousfried}*{Definition 2.1}]
    A \emph{spectrum} $X$ is a sequence of pointed simplicial sets $X^0, X^1, X^2, \dots$ together with pointed morphisms $S^1\wedge X^n\to X^{n+1}$ for all $n$, called \emph{bonding maps}. Here, $S^1=\Delta^1/\partial \Delta^1$. A \emph{morphism of spectra} $f:X\to Y$ is a sequence of pointed morphisms $f^n:X^n\to Y^n$ that commute with the bonding maps:
    \[\xymatrix{S^1\wedge X^n\ar[d]_-{S^1\wedge f^n}\ar[r] & X^{n+1}\ar[d]^-{f^{n+1}} \\
        S^1\wedge Y^n\ar[r] & Y^{n+1}}\]
    We write $\Spt$ for the category of spectra and morphisms of spectra.
\end{defi}

We endow $\Spt$ with its stable model structure, that we proceed to describe; see \cite{bousfried}*{Section 2} and \cite{hoveyspectra}*{Section 3} for details:
\begin{itemize}
    \item Let $X$ be a spectrum and let $m\in\Z$. The \emph{$m$-th stable homotopy group} of $X$ is defined as:
          \[\pi_m(X)=\colim_k \pi_{m+k}(X^k)\]
          A morphism of spectra $f:X\to Y$ is a \emph{weak equivalence} if $\pi_m(f)$ is an isomorphism for all $m\in\Z$.
    \item A morphism of spectra $f:X\to Y$ is a \emph{fibration} if $f^n:X^n\to Y^n$ is a fibration of simplicial sets for all $n$.
    \item A morphism of spectra is a \emph{cofibration} if it has the left lifting property with respect to trivial fibrations.
\end{itemize}

\begin{lem}[\cite{rosicky}*{Example 3.6 (iii)}]\label{lem:sptCombi}
    The stable model structure on $\Spt$ is combinatorial.
\end{lem}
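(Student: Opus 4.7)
The plan is to verify the two defining conditions of a combinatorial model category separately: namely, that $\Spt$ is locally presentable as a category, and that the stable model structure is cofibrantly generated.

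First I would establish local presentability of $\Spt$. The category admits a description as a category of diagrams in pointed simplicial sets satisfying a finite amount of equational data, namely a sequence of pointed simplicial sets $\{X^n\}_{n\geq 0}$ together with bonding morphisms $S^1\wedge X^n\to X^{n+1}$. Since pointed simplicial sets form a locally finitely presentable category and the bonding data can be encoded as a limit sketch on an appropriate diagram shape, $\Spt$ is itself locally presentable (indeed, it is the category of models of a finite limit sketch in $\S_*$).

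Next I would address cofibrant generation. The natural route is through the \emph{level} (projective) model structure on $\Spt$, where weak equivalences and fibrations are defined objectwise. For each $n\geq 0$, the evaluation functor $\ev_n:\Spt\to\S_*$ has a left adjoint $F_n$, and the level model structure is cofibrantly generated by the sets $\{F_n(i)\}$ and $\{F_n(j)\}$, where $i$ and $j$ range over the standard generating cofibrations and trivial cofibrations of $\S_*$. One then recognizes the stable model structure as the left Bousfield localization of the level model structure at the set of maps $\{F_{n+1}(S^1\wedge K_+)\to F_n(K_+)\}$, where $K$ ranges over a set of representatives for the cofibrant generators of $\S_*$. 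Applying Smith's theorem on left Bousfield localizations of combinatorial model categories (which requires the level structure to be combinatorial and left proper, both of which hold), the stable model structure is combinatorial.

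The main obstacle is the verification that the stable model structure as described in Bousfield--Friedlander \cite{bousfried}, defined via stable homotopy groups, coincides with the Bousfield localization of the level structure at the maps above; this identification is standard but requires Hovey's analysis \cite{hoveyspectra} of stable model structures on spectra to carry out rigorously. Alternatively, since the statement is already attributed to \cite{rosicky}*{Example 3.6 (iii)}, one could simply cite the result and avoid reproducing the cofibrant-generation argument, which is the approach the authors take here.
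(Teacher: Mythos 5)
Your proposal takes a genuinely different route from the paper, and one half of it is on shaky ground. For cofibrant generation, the paper simply cites \cite{hoveyspectra}*{Definition 3.3 and Corollary 3.5}, which directly exhibits generating sets for the stable structure; your detour through the level/projective structure, Bousfield localization, and Smith's theorem reproves Hovey's construction rather than using it, and as you note yourself this ends up requiring Hovey's analysis anyway, so it buys nothing here. For local presentability, the paper is more concrete: it invokes \cite{ros}*{Theorem 1.11} and explicitly exhibits $\{F_n(\Delta^m_+):m,n\geq 0\}$ as a strong generator consisting of finitely presentable spectra. Your claim that $\Spt$ is the category of models of a \emph{finite limit sketch} in $\S_*$ is the weak point: the bonding data $S^1\wedge X^n\to X^{n+1}$ involves the smash product, a colimit construction, not a limit condition, so encoding it as a limit sketch is not routine (one would need to pass to adjoint data $X^n\to\Omega X^{n+1}$ and then justify that $\Hom_*(S^1,-)$ fits into a sketch framework, which requires more care than "a finite amount of equational data"). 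A cleaner categorical justification would be that $\Spt$ is the category of algebras for the accessible (indeed cocontinuous) endofunctor $X\mapsto(S^1\wedge X^{n-1})_n$ on $\S_*^{\N}$, which is a standard locally presentable construction, but this is not what you wrote. The paper's direct exhibition of a compact strong generator is both shorter and avoids these subtleties.
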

\begin{proof}
    Recall from \cite{dugger}*{Definition 2.1} that a model category is \emph{combinatorial} if it is cofibrantly generated and its underlying category is locally presentable.
    The stable model structure on $\Spt$ is cofibrantly generated by \cite{hoveyspectra}*{Definition 3.3 and Corollary 3.5}. We have to show that the underlying category is locally presentable. We claim that $\Spt$ is locally \emph{finitely} presentable. By \cite{ros}*{Theorem 1.11}, to prove this claim we have to show that $\Spt$ has a strong generator formed by finitely presentable objects. By \cite{ros}*{Example 1.12}, the set $\{\Delta^m:m\geq 0\}$ is a strong generator for $\S$ formed by finitely presentable objects. Since the forgetful functor $\S_*\to\S$ commutes with filtered colimits, the latter is easily seen to imply that $\{\Delta^m_+:m\geq 0\}$ is a strong generator for $\S_*$ formed by finitely presentable objects. For $n\geq 0$, let $F_n:\S_*\to\Spt$ be the left adjoint to evaluation at $n$. Explicitely, for a pointed simplicial set $X$, let $F_n(X)$ be the spectrum whose level $k$ is $(S^1)^{\wedge(k-n)}\wedge X$ if $k\geq n$ and $\ast$ otherwise. The bonding maps are the obvious ones. It is easily verified that $\{F_n(\Delta^m_+):m,n\geq 0\}$ is a strong generator for $\Spt$ formed by finitely presentable spectra.
\end{proof}

\subsection{Bivariant \texorpdfstring{$K$}{K}-theory spectra}\label{sec:sptkkg}

In this section we recall the definitions of spectra representing $\kk$-theory \cite{garku}*{Theorem 9.8} and $\kk^G$-theory \cite{tesisema}*{Theorem 5.3.11}.

Let $\cC$ denote either $\Algl$ or $\GAlgl$. For two objects $A$ and $B$ of $\cC$, the \emph{bivariant $K$-theory space} of the pair $(A,B)$  \cite{htpysimp}*{Definition 4.10} is defined as the fibrant simplicial set:
\[\scrK(A,B):=\colim_n \Omega^n\Ex^\infty \Hom_\cC(J^nA, B^\Delta)\]
This definition is equivalent to the original one given in \cite{garku}*{Section 4}. By \cite{garku}*{Theorem 5.1} there is a natural isomorphism of simplicial sets:
\[\scrK(A,B)\cong \Omega\scrK(JA, B)\]
Thus, we have an $\Omega$-spectrum $\bbK(A,B)$ defined by the sequence:
\[\scrK(A,B),\; \scrK(JA, B),\; \scrK(J^2A, B),\; \dots\]
The spectrum $\bbK(A,B)$ (denoted by $\bbK^{\mathrm{unst}}(A,B)$ in \cite{garku}) represents a universal bivariant $K$-theory introduced by Garkusha that is excisive, homotopy invariant but matrix-unstable \cite{garku}*{Comparison Theorem B}. Different matrix-stabilizations can be perfomed in order to obtain spectra representing $\kk$- and $\kk^G$-theories:
\begin{enumerate}
    \item \emph{Stabilization by finite matrices.} For two objects $A$ and $B$ of $\cC$ put
          \[\bbK_f(A,B):=\colim_n \bbK(A, M_nB)\]
          where the transition maps are induced by the inclusion $M_nB\to M_{n+1}B$ into the upper left corner. These spectra represent a universal bivariant $K$-theory that is excisive, homotopy invariant and stable by finite matrices \cite{garku}*{Theorem 9.8}; see \cite{garku}*{Section 9} and \cite{tesisema}*{Section 5.1} for details.
    \item \emph{Stabilization by finite matrices indexed on an infinite set.} Let $X$ be an infinite set. For two objects $A$ and $B$ of $\cC$ put \cite{tesisema}*{Definition 5.2.21}:
          \[\bbK_X (A,B):=\bbK_f(A, M_X B)\]
          The spectra $\bbK_X(A,B)$ represent a universal bivariant $K$-theory that is excisive, homotopy invariant and $M_X$-stable \cite{tesisema}*{Theorem 5.2.22}. For any $X$, Weibel's homotopy $K$-theory $\KH$ is the functor represented by the base ring $\ell$ \cite{tesisema}*{Theorem 5.2.20}. For $X=\N$, this theory coincides with the $\kk$-theory defined in \cite{cortho}.
    \item \emph{$G$-stabilization. } Let $G$ be a group and let $X=\N\times |G|$. For two $G$-algebras $A$ and $B$ put:
          \[\bbK^G(A, B):=\bbK_X(M_GA, M_GB)\]
          These spectra represent $\kk^G$-theory \cite{tesisema}*{Theorem 5.3.11}.
\end{enumerate}

\begin{lem}[cf. \cite{tesisema}*{Section 4.4}]\label{lem:mult}
    Let $\S_f\subset \S$ denote the full subcategory of finite simplicial sets. Let $G$ be a group, $X$ be an infinite set, $G/K\in\OrGFin$, $A, B\in\GAlgl$ and $S\in\S_f$. Then, for $\bbE\in \{\bbK, \bbK_f, \bbK_X, \bbK^G\}$, there is a morphism of spectra
    \begin{equation}\label{eq:lemMult}S_+\wedge \bbE(A^{(G/K)}, B)\to \bbE(A^{(G/K\times S)}, B)\end{equation}
    that is natural in $A$, $B$, $G/K$ and $S$. Moreover, for $S=\Delta^0$ this is an isomorphism.
\end{lem}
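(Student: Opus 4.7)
My plan is to build the map from a natural action of finite pointed simplicial sets on the bivariant $K$-theory spectra. As a first step I would use Lemma \ref{lem:cortholX}(2) together with Remark \ref{rem:polynomialfcoprod} to identify, for finite $S$ (regarding $G/K$ as a discrete $G$-simplicial set),
\[
A^{(G/K\times S)}\cong \bigoplus_{gK\in G/K}A^S\cong \bigoplus_{gK\in G/K}A\otimes \ell^S\cong A^{(G/K)}\otimes \ell^S.
\]
This reduces the problem to constructing, naturally in $C\in\GAlgl$, a morphism of spectra $\mu_S\colon S_+\wedge \bbK(C,B)\to \bbK(C\otimes \ell^S,B)$, with the obvious variants for $\bbK_f$, $\bbK_X$, and $\bbK^G$.

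Next, I would invoke the construction of \cite{tesisema}*{Section 4.4}, which provides such an action at the level of Garkusha's unstabilized spectrum $\bbK$. The underlying idea is that a simplex $s\colon \Delta^n\to S$ induces by contravariance an algebra homomorphism $\ell^S\to \ell^{\Delta^n}$, and combining this with the $\ell^{\Delta^n}$-module structure on $B^{\Delta^n}$ and a natural comparison $J^k(C\otimes \ell^S)\to J^kC\otimes \ell^S$ allows one to transform an $n$-simplex $f\colon J^kC\to B^{\Delta^n}$ of $\scrK(J^kC,B)$ into an $n$-simplex of $\scrK(J^k(C\otimes \ell^S),B)$. The resulting assignment $(s,f)\mapsto(\text{composite})$ yields a map of pointed simplicial sets $S_+\wedge \scrK(J^kC,B)\to \scrK(J^k(C\otimes \ell^S),B)$ compatible with the bonding maps, hence the desired $\mu_S$.

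I would then propagate $\mu_S$ to $\bbK_f$, $\bbK_X$, and $\bbK^G$ by observing that the matrix-stabilization operations tensor cleanly with $\ell^S$, so that $\mu_S$ descends through the filtered colimits and the substitutions $B\mapsto M_nB$, $B\mapsto M_XB$, and $A,B\mapsto M_GA,M_GB$ used to define them. Naturality in $A$, $B$, $G/K$, and $S$ is immediate from the construction, since each ingredient is natural. The case $S=\Delta^0$ is straightforward: the identifications $A^{(G/K)}\otimes \ell^{\Delta^0}\cong A^{(G/K)}$ and $S^0\wedge E\cong E$ both reduce $\mu_{\Delta^0}$ to the identity on $\bbK(A^{(G/K)},B)$, and similarly for the other variants.

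The hard part will be the careful bookkeeping required to verify that the space-level construction lifts compatibly through all the operations $\colim_n$, $\Omega^n$, $\Ex^\infty$, and the various matrix stabilizations, so as to yield a genuine morphism of spectra that is natural in all four variables, rather than merely a level-wise family of compatible maps of pointed simplicial sets.
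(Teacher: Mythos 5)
Your proposal matches the paper's proof in its essentials: reduce to $\bbE=\bbK$, define the map levelwise on $\scrK(J^p(-),B)$ by precomposing with the classifying map $J^{p+v}(C\otimes\ell^S)\to J^{p+v}(C)\otimes\ell^S$ and using the restriction $\sigma^*\colon(\ )^S\to(\ )^{\Delta^q}$ together with the diagonal/multiplication map on $B^{\Delta^\bullet}$, then propagate to $\bbK_f$, $\bbK_X$, $\bbK^G$ by naturality through the colimits and matrix substitutions, and finally check the $S=\Delta^0$ case by observing that the classifying map and the evaluation become identities. The only cosmetic difference is that you package $A^{(G/K\times S)}$ as $A^{(G/K)}\otimes\ell^S$ where the paper works directly with $(A^{(G/K)})^S$; these are the same object and the argument is otherwise identical.
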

\begin{proof}
    It suffices to prove the Lemma for $\bbE=\bbK$: the case $\bbE=\bbK_f$ follows from this upon taking colimit over the inclusions $M_nB\to M_{n+1}B$ and the rest of the cases follow from the latter upon replacing $A$ and $B$ with appropriate matrix algebras.

    Let us prove the case $\bbE=\bbK$. We will define \eqref{eq:lemMult} levelwise. At level $p\geq 0$, we have to define a morphism of simplicial sets:
    \begin{equation}\label{eq:lemMultiLevel}S_+\wedge \scrK(J^p(A^{(G/K)}),B)\to \scrK(J^p(A^{(G/K\times S)}), B)\end{equation}
    Let us describe it in dimension $q\geq 0$. A $q$-simplex of $S_+\wedge \scrK(J^p(A^{(G/K)}),B)$ is represented by a pair $(\sigma,\alpha)$ where $\sigma$ is a $q$-simplex of $S$ and $\alpha$ is a $q$-simplex of $\scrK(J^p(A^{(G/K)}),B)$.
    Below we use the notation of \cite{htpysimp}*{Section 2.9}.
    Let $\alpha$ be represented by a $G$-algebra homomorphism
    \[\alpha:J^{p+v}(A^{(G/K)})\to B^{(I^v\times \Delta^q, \partial I^v\times \Delta^q)}_r\]
    for some $v,r\geq 0$. Then the morphism \eqref{eq:lemMultiLevel}
    sends the pair $(\sigma, \alpha)$ to the $q$-simplex of $\scrK(J^p(A^{(G/K\times S)}), B)$ represented by the following composite in $\GAlgl$:
    \[\xymatrix@R=1.5em{
        J^{p+v}\left(A^{(G/K\times S)}\right)\ar@{=}[r] & J^{p+v}\left(\left(A^{(G/K)}\right)^{S}\right)\ar[r]^-{\clas} &
        \left[J^{p+v}\left(A^{(G/K)}\right)\right]^{S}\ar[d]^-{\alpha_*} \\
        B^{(I^v\times\Delta^q\times \Delta^q, \partial I^v\times \Delta^q\times \Delta^q)}_r\ar[d]^-{\diag^*} & \left[B^{(I^v\times \Delta^q, \partial I^v\times\Delta^q)}_r \right]^{\Delta^q}\ar[l]_-{\mu} &
        \left[B^{(I^v\times \Delta^q, \partial I^v\times\Delta^q)}_r\right]^{S}\ar[l]_-{\sigma^*} \\
        B^{(I^v\times \Delta^q, \partial I^v\times \Delta^q)}_r & &
        }\]
    Here, $\mu$ is the morphism defined in \cite{htpysimp}*{Remark 3.4}.
    This clearly defines a morphism \eqref{eq:lemMultiLevel} that is natural in $A$, $B$, $G/K$ and $S$. Let us now show that \eqref{eq:lemMultiLevel} is an isomorphism for $S=\Delta^0$. First note that the classifying map
    \[\clas:J^{p+v}\left(\left(A^{(G/K)}\right)^{\Delta^0}\right)\to
        \left[J^{p+v}\left(A^{(G/K)}\right)\right]^{\Delta^0}\]
    is an isomorphism. Moreover, it follows from the naturality of $\mu$ that the composite $\diag^*\circ\mu\circ \sigma^*$ equals the obvious isomorphism:
    \[\left[B^{(I^v\times \Delta^q, \partial I^v\times\Delta^q)}_r\right]^{\Delta^0}\xrightarrow{\cong}B^{(I^v\times \Delta^q, \partial I^v\times\Delta^q)}_r \]
    Together, these observations imply that, for $S=\Delta^0$ and making the obvious identifications, the morphism \eqref{eq:lemMultiLevel} is the identity of $\scrK(J^p(A^{(G/K)}),B)$. This finishes the proof.
\end{proof}

\begin{lem}\label{lem:sqSptComm}
    Let $A$ and $B$ be two $G$-algebras and let $f:C\to D$ be a morphism of $G$-algebras. Then the following square of spectra commutes:
    \[\xymatrix{\bbK^G(A, B)\ar[r]^-{-\otimes C}\ar[d]_-{-\otimes D} &
        \bbK^G(A\otimes C, B\otimes C)\ar[d]^-{f_*} \\
        \bbK^G(A\otimes D, B\otimes D)\ar[r]^-{f^*} &
        \bbK^G(A\otimes C, B\otimes D)
        }\]
\end{lem}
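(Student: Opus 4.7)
The plan is to verify the commutativity level-wise and, at each level, dimension-wise at the level of simplicial sets that define the bivariant $K$-theory spaces. Since $\bbK^G(A,B)=\bbK_f(M_GA, M_XM_GB)$ (with $X=\N\times|G|$) is defined as a colimit of spectra of the form $\bbK(M_GA, M_nM_XM_GB)$, and each of the four morphisms in the square is defined objectwise by the same natural construction, it suffices to prove the analogous statement for the universal (matrix-unstable) spectra $\bbK(-,-)$. Thus, fixing $p\geq 0$, I would work at level $p$, where a $q$-simplex is represented by a $G$-algebra homomorphism
\[
    \alpha: J^{p+v}(A)\longrightarrow B^{(I^v\times \Delta^q,\,\partial I^v\times \Delta^q)}_r
\]
for some $v,r\geq 0$.

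Next, I would unravel the four morphisms on such an $\alpha$. The morphism $-\otimes C$ is induced by the natural assignment sending $\alpha$ to the composite
\[
    J^{p+v}(A\otimes C)\xrightarrow{\;\clas\;} J^{p+v}(A)\otimes C \xrightarrow{\alpha\otimes \id_C} B^{(I^v\times \Delta^q,\,\partial I^v\times \Delta^q)}_r\otimes C,
\]
where $\clas$ is the classifying $G$-algebra homomorphism, and similarly for $-\otimes D$. The morphism $f_*$ is induced by post-composition with $\id_B\otimes f$, while $f^*$ is induced by pre-composing with the morphism of algebra extensions induced by $\id_A\otimes f$, i.e.\ with $J^{p+v}(\id_A\otimes f)$ on the domain side. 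With these descriptions, both composites $f_*\circ(-\otimes C)$ and $f^*\circ(-\otimes D)$ send $\alpha$ to a $G$-algebra homomorphism $J^{p+v}(A\otimes C)\to B^{(I^v\times\Delta^q,\,\partial I^v\times\Delta^q)}_r\otimes D$.

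The comparison of the two composites then reduces to showing that the following square of $G$-algebras commutes:
\[
\xymatrix@C=3em{
J^{p+v}(A\otimes C)\ar[r]^-{\clas}\ar[d]_-{J^{p+v}(\id_A\otimes f)} & J^{p+v}(A)\otimes C\ar[d]^-{\id\otimes f} \\
J^{p+v}(A\otimes D)\ar[r]^-{\clas} & J^{p+v}(A)\otimes D
}
\]
This is the naturality of the classifying map $\clas$ with respect to the morphism $\id_A\otimes f$, and it follows directly from the universal property defining $\clas$ together with the fact that $f$ is a morphism of $G$-algebras. The remainder of the argument is a diagram chase: the two sides of the original square agree after passing through the above naturality square, tensoring with the identity, and composing.

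The only obstacle is notational bookkeeping across the matrix stabilizations, path extensions, and simplicial subdivisions $(I^v\times\Delta^q,\,\partial I^v\times\Delta^q)_r$ needed to assemble $\bbK^G$; no nontrivial homotopy argument is required, since all four morphisms in the square are induced by strict pre- and post-composition with natural $G$-algebra homomorphisms, and the classifying map is natural in the algebra variable.
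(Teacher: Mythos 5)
Your proof is correct and takes essentially the same route as the paper: reduce to the matrix-unstable spectra $\bbK$ by unravelling the stabilizations, work levelwise and dimension-wise on the simplicial mapping spaces, and observe that the crux is the naturality of the classifying map $\clas$ with respect to $\id_A\otimes f$. The paper simply writes out the full commutative diagram of $G$-algebras (including the identification $B^{S^n\times\Delta^q}_r\otimes C\cong (B\otimes C)^{S^n\times\Delta^q}_r$ on the target side) where you summarize those steps as bookkeeping, but the argument is the same.
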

\begin{proof}
    Unravelling the definitions of the spectra $\bbK^G$, $\bbK_X$ and $\bbK_f$, it suffices to show that the following square commutes:
    \[\xymatrix{\bbK(A, B)\ar[r]^-{-\otimes C}\ar[d]_-{-\otimes D} &
        \bbK(A\otimes C, B\otimes C)\ar[d]^-{f_*} \\
        \bbK(A\otimes D, B\otimes D)\ar[r]^-{f^*} &
        \bbK(A\otimes C, B\otimes D)
        }\]
    At level $p\geq 0$ the latter is the following square of simplicial sets:
    \begin{equation}
        \begin{gathered}
            \label{eq:compatotimes}\xymatrix{\scrK(J^pA, B)\ar[r]^-{-\otimes C}\ar[d]_-{-\otimes D} &
            \scrK(J^p(A\otimes C), B\otimes C)\ar[d]^-{f_*} \\
            \scrK(J^p(A\otimes D), B\otimes D)\ar[r]^-{f^*} &
            \scrK(J^p(A\otimes C), B\otimes D)
            }
        \end{gathered}
    \end{equation}
    Let $q\geq 0$. Write $B^{S^n\times\Delta^q}_r$ instead of $B^{(I^n\times\Delta^q,\partial\Delta^n\times\Delta^q)}_r$ to ease notation. Let $\alpha$ be a $q$-simplex of $\scrK(J^pA,B)$, represented by an algebra homomorphism $\alpha:J^{p+n}A\to B^{S^n\times\Delta^q}_r$ for some $n,r\geq 0$. Consider the following commutative diagram of algebras:
    \[\xymatrix{J^{p+n}(A\otimes C)\ar[r]^-{\clas}\ar[d]^-{J^{p+n}(\id\otimes f)} &
        J^{p+n}(A)\otimes C\ar[r]^-{\alpha\otimes \id}\ar[d]^-{J^{p+n}(\id)\otimes f} &
        B^{S^n\times\Delta^q}_r\otimes C\ar@{=}[r]\ar[d]^-{\id\otimes f} &
        (B\otimes C)^{S^n\times\Delta^q}_r\ar[d]^-{(\id\otimes f)^{S^n\times\Delta^q}_r} \\
        J^{p+n}(A\otimes D)\ar[r]^-{\clas} &
        J^{p+n}(A)\otimes D\ar[r]^-{\alpha\otimes \id} &
        B^{S^n\times\Delta^q}_r\otimes D\ar@{=}[r] &
        (B\otimes D)^{S^n\times\Delta^q}_r
        }\]
    The composite of the top morphisms followed by the rightmost vertical morphism represents the $q$-simplex $f_*(\alpha\otimes C)$ of $\scrK(J^p(A\otimes C), B\otimes D)$. The leftmost vertical morphism followed by the composite of the bottom morphisms represents the $q$-simplex $f^*(\alpha\otimes D)$ of $\scrK(J^p(A\otimes C), B\otimes D)$. The commutativity of the diagram shows that $f_*(\alpha\otimes C)=f^*(\alpha\otimes D)$ and, thus, that \eqref{eq:compatotimes} commutes.
\end{proof}

\section{Equivariant \texorpdfstring{$\kk$}{kk}-theory as a universal \texorpdfstring{$\delta$}{delta}-functor into a graded category}
Let $G$ be a group. Equivariant $\kk^G$-theory was introduced in \cite{euge}*{Theorem 4.1.1} as the universal homotopy invariant, $G$-stable and excisive functor from $\GAlgl$ into a triangulated category. This universal property allows us, for example, to define the crossed product with a subgroup $H\subseteq G$ --- or better, with an orbit space $G/H$ --- at the level of $\kk$-theory. Indeed, by Proposition \ref{prop:existencertimesGH}, for every subgroup $H\subseteq G$ there exists a unique triangulated functor $\overline{-\rtimes G/H}:\kk^G\to \kk$ making the following diagram commute:
\[
    \xymatrix@C=6em{
    \GAlgl\ar[d]_-{j^G}\ar[r]^-{\cR(-\rtimes G/H)} & \Algl\ar[d]^-{j} \\
    \kk^G\ar[r]^-{\overline{-\rtimes G/H}} & \kk
    }
\]
A morphism $f:G/H\to G/K$ of $G$-sets induces a natural transformation $f_*:\cR(-\rtimes G/H)\to \cR(-\rtimes G/K)$ of functors $\GAlgl\to\Algl$. We would like $f$ to induce as well a natural transformation $f_*:\overline{-\rtimes G/H}\to \overline{-\rtimes G/K}$ of triangulated functors $\kk^G\to\kk$. Such a natural transformation can be thought of as a functor $\kk^G\to \kk^I$, where $\kk^I$ is the category of functors from the interval category $I$ into $\kk$. We cannot expect to define a functor $\kk^G\to \kk^I$ by the universal property of $\kk^G$ mentioned above \cite{euge}*{Theorem 4.1.1} since $\kk^I$ is not a triangulated category in an obvious way. To get around this problem, we work in the setting of graded categories.

\begin{defi}[\cite{loopsthtpy}*{Sec. 10}]
    A \emph{graded category} is a pair $(\scrA, \Omega)$ where $\scrA$ is an additive category and $\Omega$ is an automorphism of $\scrA$. A \emph{graded functor} $F:(\scrA,\Omega)\to(\scrA',\Omega')$ is an additive functor $F:\scrA\to\scrA'$ such that $F\circ \Omega = \Omega'\circ F$. Let $F,G:(\scrA,\Omega)\to (\scrA',\Omega')$ be graded functors. A \emph{graded natural transformation} $\nu:F\to G$ is a natural transformation $\nu$ such that $\Omega'(\nu_X)=\nu_{\Omega(X)}:\Omega'F(X)\to \Omega'G(X)$ for all $X\in\scrA$.
\end{defi}

Every triangulated category is a graded category. If $\scrA$ is a graded category, then $\scrA^I$ is a graded category as well.

\begin{defi}[cf. \cite{loopsthtpy}*{Def. 10.6}]\label{def:deltafunctor}
    Let $\cC$ denote either $\Algl$ or $\GAlgl$ and let $(\scrA,\Omega)$ be a graded category. A \emph{$\delta$-functor} with values in $\scrA$ consists of the following data:
    \begin{enumerate}
        \item a functor $X:\cC\to\scrA$ that preserves finite products;
        \item a morphism $\delta_{\cE}\in\Hom_\scrA(\Omega X(C), X(A))$ for every extension $\cE$ as in \eqref{eq:extension}.
    \end{enumerate}
    These morphisms $\delta_{\cE}$ are subject to the following conditions:
    \begin{enumerate}
        \item $\delta_{\cE}:\Omega X(C)\to X(A)$ is an isomorphism if $X(B)=0$;
        \item the morphisms $\delta_{\cE}$ are natural with respect to morphisms of extensions.
    \end{enumerate}
\end{defi}

Any excisive homology theory in the sense of Corti\~nas and Thom \cite{cortho}*{Sec. 6.6} is a $\delta$-functor. By \cite{loopsthtpy}*{Theorem 10.15 and Remark 10.17}, the excisive, homotopy invariant and matrix-unstable bivariant $K$-theory introduced by Garkusha in \cite{garkuni} is the universal homotopy invariant $\delta$-functor with values in a graded category. By \cite{tesisema}*{Theorem 5.2.15}, for any infinite set $X$, $M_X$-stable algebraic $\kk$-theory is the universal homotopy invariant and $M_X$-stable $\delta$-functor with values in a graded category. A similar result holds in the equivariant context:

\begin{thm}\label{thm:molesto}
    Let $G$ be a group, let $(\scrA,\Omega)$ be a graded category and let $X:\GAlgl\to\scrA$ be a homotopy invariant and $G$-stable $\delta$-functor. Then there exists a unique graded functor $\bar{X}:\kk^G\to \scrA$ such that $\bar{X}(\partial^G_{\cE})=\delta_{\cE}$ for every extension $\cE$ and such that the following diagram commutes:
    \begin{equation}\label{eq:molesto}\begin{gathered}\xymatrix{\GAlgl\ar@/_1pc/[dr]_{X}\ar[r]^-{j^G} & \kk^G\ar@{-->}[d]^-{\exists !\bar{X}} \\
            & \scrA}\end{gathered}\end{equation}
    Here, the morphisms $\partial^G_{\cE}$ are those that make $j^G:\GAlgl\to\kk^G$ into an excisive homology theory.
\end{thm}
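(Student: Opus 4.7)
The approach mirrors the non-equivariant case \cite{tesisema}*{Theorem 5.2.15} and Garkusha's argument \cite{loopsthtpy}*{Theorem 10.15}: extend the target along a fully faithful embedding into a triangulated category, invoke the triangulated universal property of $\kk^G$ already available from \cite{euge}*{Theorem 4.1.1}, and then descend the resulting triangulated factorization back to $\scrA$.

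First I would construct a fully faithful graded embedding $\iota:(\scrA,\Omega)\hookrightarrow (\tri,\Omega)$ into a triangulated category $(\tri,\Omega)$ whose essential image is closed under $\Omega^{\pm 1}$, following a construction such as the one in \cite{loopsthtpy}*{Section 10}. Then I would promote $\iota\circ X:\GAlgl\to\tri$ to an excisive, homotopy invariant, $G$-stable functor into $\tri$. Homotopy invariance and $G$-stability pass through $\iota$ automatically; for excision, each extension $\cE$ as in \eqref{eq:extension} produces a morphism $\iota(\delta_\cE):\Omega\,\iota X(C)\to \iota X(A)$, which can be uniquely completed to a distinguished triangle in $\tri$. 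The naturality of $\delta_\cE$ with respect to morphisms of extensions ensures these triangles are natural in $\cE$, while the condition that $\delta_\cE$ be an isomorphism whenever $X(B)=0$ is compatible with this triangle being the correct cofiber sequence.

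By \cite{euge}*{Theorem 4.1.1} applied to $\iota\circ X$ there exists a unique triangulated functor $\widetilde{X}:\kk^G\to\tri$ with $\widetilde{X}\circ j^G=\iota\circ X$ and $\widetilde{X}(\partial^G_\cE)=\iota(\delta_\cE)$. Since $\widetilde{X}$ sends $j^G(A)$ to $\iota(X(A))$ and $\iota$ is fully faithful with essential image closed under $\Omega$, the functor $\widetilde{X}$ factors uniquely through $\iota$, producing a graded functor $\bar{X}:\kk^G\to\scrA$ which satisfies the commutativity \eqref{eq:molesto} and $\bar{X}(\partial^G_\cE)=\delta_\cE$. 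Uniqueness of $\bar{X}$ then follows from uniqueness of $\widetilde{X}$: any candidate $\bar{X}'$ produces a triangulated functor $\iota\circ\bar{X}':\kk^G\to\tri$ satisfying the hypotheses of \cite{euge}*{Theorem 4.1.1}, and hence equals $\widetilde{X}$, from which $\bar{X}'=\bar{X}$ by fully faithfulness of $\iota$.

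The main obstacle is the construction of Step 1: pinning down a triangulated envelope $(\tri,\Omega)$ of a generic graded category in such a way that graded functors and graded natural transformations out of $\scrA$ are correctly captured, and so that the descent argument actually yields a graded functor (rather than merely an assignment on objects and morphisms). The non-equivariant analogue \cite{tesisema}*{Theorem 5.2.15} develops this machinery in detail; since the embedding is entirely about the target category $\scrA$ and independent of the group $G$, I would transplant it verbatim to the equivariant setting, with the remaining steps becoming formal consequences of the universal property recalled from \cite{euge}.
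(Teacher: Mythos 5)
Your proposal is a genuinely different route from the paper's, and it has a gap that I do not think can be patched: to invoke the triangulated universal property of $\kk^G$ from \cite{euge}*{Theorem 4.1.1} you must verify that $\iota\circ X:\GAlgl\to\tri$ is \emph{excisive}, meaning that for every extension $\cE$ the sequence $\Omega\,\iota X(C)\to\iota X(A)\to\iota X(B)\to\iota X(C)$ is a distinguished triangle in $\tri$. But the $\delta$-functor axioms (Definition \ref{def:deltafunctor}) are strictly weaker than excisivity: they only demand naturality of $\delta_\cE$ and that $\delta_\cE$ be an isomorphism when $X(B)=0$. Nothing forces the displayed sequence to be a distinguished triangle in a triangulated envelope. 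Your phrasing --- that $\iota(\delta_\cE)$ ``can be uniquely completed to a distinguished triangle'' and that the zero-detection condition ``is compatible with this triangle being the correct cofiber sequence'' --- conflates existence of \emph{some} completing triangle with the requirement that the \emph{given} sequence be distinguished. The whole point of stating the universal property for $\delta$-functors into graded categories is that it is a \emph{stronger} assertion than the triangulated universal property, so it cannot be deduced from the latter. Your uniqueness step has the same problem in mirror image: for a competing $\bar{X}'$ the composite $\iota\circ\bar{X}'$ is a graded functor but need not preserve distinguished triangles (since $\iota$ is not triangulated), so \cite{euge}*{Theorem 4.1.1} does not apply to it either.

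The paper avoids the envelope altogether. It uses the concrete construction of $\kk^G$ as the category with $\Hom_{\kk^G}(A,B)=\Hom_{\kk^{\GAlgl}}(M_GA,M_GB)$, where $j:\GAlgl\to\kk^{\GAlgl}$ is the universal $M_{\N\times|G|}$-stable theory. Since a $G$-stable $\delta$-functor is in particular $M_{\N\times|G|}$-stable, the non-equivariant graded-category universal property \cite{tesisema}*{Theorem 5.2.15} gives a unique graded $\hat{X}:\kk^{\GAlgl}\to\scrA$. Then one analyzes what $\bar{X}$ must do on an arbitrary morphism $[f]\in\kk^G(A,B)$ by factoring $[f]$ through the $G$-stability zig-zag \eqref{eq:zigzag} (via \cite{tesisema}*{Lemma 5.3.6}), expressing $\bar{X}([f])$ as $\hat{X}(\iota_B)^{-1}\circ\hat{X}(\iota_B')\circ\hat{X}(f)\circ\hat{X}(\iota_A')^{-1}\circ\hat{X}(\iota_A)$. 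This gives uniqueness for free and the formula is readily checked to define the required graded functor. That is the argument you would want to reproduce, rather than detouring through a triangulated target.
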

\begin{proof}
    The proof is similar to that of \cite{euge}*{Thm. 4.1.1}.
    We start by recalling some details of the construction of $\kk^G$.
    Following \cite{cortho}*{Sec. 6.6}, an \emph{excisive homology theory} of $G$-algebras consists of a triangulated category $(\tri,\Omega)$, a functor $X:\GAlgl\to\tri$ and a morphism $\delta_{\cE}:\Omega X(C)\to X(A)$ for every extension $\cE$ as in \eqref{eq:extension} such that:
    \begin{enumerate}
        \item For every extension $\cE$, the following is a distinguished triangle in $\tri$:
              \[\xymatrix{\Omega X(C)\ar[r]^-{\delta_{\cE}} & X(A) \ar[r] & X(B)\ar[r] & X(C)}\]
        \item The morphisms $\delta_{\cE}$ are natural with respect to morphisms of extensions.
    \end{enumerate}
    Fix a universal excisive, homotopy invariant and $M_{\N\times |G|}$-stable homology theory, $j:\GAlgl\to \kk^{\GAlgl}$, $\{\partial_{\cE}\}_{\cE}$.
    The existence of such a homology theory follows from \cite{cortho}*{Thm. 6.6.2} in the case $G=1$, from \cite{euge}*{Thm. 2.6.5} for countable $G$ and from \cite{tesisema}*{Thm. 5.2.16} in the general case. Since the composite
    \[\xymatrix{\GAlgl\ar[r]^-{M_G\otimes -} & \GAlgl\ar[r]^-{j} & \kk^{\GAlgl}}\]
    is an excisive, homotopy invariant and $M_{\N\times |G|}$-stable homology theory, there exists a unique triangulated functor $M_G:\kk^{\GAlgl}\to\kk^{\GAlgl}$ making the following square commute:
    \[\xymatrix{\GAlgl\ar[r]^-{j}\ar[d]_-{M_G\otimes -} & \kk^{\GAlgl}\ar[d]^-{M_G} \\
        \GAlgl\ar[r]^-{j} & \kk^{\GAlgl}}\]
    Now define $\kk^G$ (see \cite{euge}*{Sec. 4.1} and \cite{tesisema}*{Def. 5.3.4}) as the category whose objects are those of $\kk^{\GAlgl}$ and whose morphism sets are given by:
    \[\Hom_{\kk^G}(A,B):=\Hom_{\kk^{\GAlgl}}(M_GA, M_GB)\]
    Let $f\in\Hom_{\kk^{\GAlgl}}(M_GA, M_GB)$. We will often consider $f$ both as a morphism in $\kk^{\GAlgl}$ and as a morphism $A\to B$ in $\kk^G$. To avoid ambiguity, we will write $[f]$ instead of $f$ when considering $f$ as a morphism in $\kk^G$. There is a functor $t^G:\kk^{\GAlgl}\to\kk^G$ that is the identity on objects and that sends $f\in\Hom_{\kk^{\GAlgl}}(A, B)$ to $[M_G(f)]\in\kk^G(A,B)$. Let $j^G$ be the composite:
    \[\xymatrix{\GAlgl\ar[r]^-{j} & \kk^{\GAlgl}\ar[r]^-{t^G} & \kk^G}\]
    By \cite{euge}*{Thm. 4.1.1} and \cite{tesisema}*{Thm. 5.3.8}, $j^G:\GAlgl\to\kk^G$ endowed with $\{\partial^G_{\cE}:=t^G(\partial_{\cE})\}_{\cE}$ is the universal excisive, homotopy invariant and $G$-stable homology theory.

    Let us show that $j^G:\GAlgl\to\kk^G$, $\{\partial^G_{\cE}\}_{\cE}$, is also the universal homotopy invariant and $G$-stable $\delta$-functor into a graded category. Let $X:\GAlgl\to(\scrA,\Omega)$, $\{\delta_{\cE}\}$, be a homotopy invariant and $G$-stable $\delta$-functor. Since $X$ is $G$-stable, then it is $M_{\N\times |G|}$-stable; see \cite{euge}*{Section 3}. By \cite{tesisema}*{Theorem 5.2.15} there exists a unique graded functor $\hat{X}:\kk^{\GAlgl}\to\scrA$ making the following diagram commute and such that $\hat{X}(\partial_{\cE})=\delta_{\cE}$ for every extension $\cE$:
    \[\xymatrix{\GAlgl\ar[r]^-{j}\ar@/_1pc/[dr]_-{X} & \kk^{\GAlgl}\ar@{-->}[d]^-{\exists ! \hat{X}} \\
        & \scrA}\]
    Suppose that there exists a graded functor $\bar{X}:\kk^G\to\scrA$ making the triangle \eqref{eq:molesto} commute and such that $\bar{X}(\partial^G_{\cE})=\delta_{\cE}$ for every extension $\cE$. Then we have $X=\bar{X}\circ j^G = (\bar{X}\circ t^G)\circ j$ and $\delta_{\cE}=\bar{X}(\partial^G_{\cE})=(\bar{X}\circ t^G)(\partial_{\cE})$ for every extension $\cE$. By the uniqueness of $\hat{X}$ we must have $\hat{X}=\bar{X}\circ t^G$. By \cite{tesisema}*{Lemma 5.3.6}, for any $f\in\Hom_{\kk^{\GAlgl}}(M_GA,M_GB)$ we have a commutative diagram in $\kk^G$ as follows:
    \[\xymatrix{M_GA\ar[r]^-{t^G(f)}\ar[d]^-{\cong}_-{t^G(\iota_A')} & M_GB\ar[d]^-{t^G(\iota_B')}_-{\cong} \\
        M_{G_+}A & M_{G_+}B \\
        A\ar[u]_-{\cong}^-{t^G(\iota_A)}\ar[r]^-{[f]} & B\ar[u]_-{t^G(\iota_B)}^-{\cong}
        }\]
    Here $\iota$ and $\iota'$ are the morphisms appearing in the $G$-stability zig-zag \eqref{eq:zigzag}. Upon applying $\bar{X}$ to the diagram above we get:
    \[\bar{X}([f])=\hat{X}(\iota_B)^{-1}\circ \hat{X}(\iota_B')\circ \hat{X}(f)\circ \hat{X}(\iota_A')^{-1}\circ \hat{X}(\iota_A)\]
    This shows that $\bar{X}$ is uniquely determined. Moreover, it is straightforward to verify that the last equation defines a graded functor $\bar{X}:\kk^G\to\scrA$ with the desired properties.
\end{proof}

\begin{bibdiv}

    \begin{biblist}

        \bib{ros}{book}{
        author={Ad\'{a}mek, Ji\v{r}\'{\i}},
        author={Rosick\'{y}, Ji\v{r}\'{\i}},
        title={Locally presentable and accessible categories},
        series={London Mathematical Society Lecture Note Series},
        volume={189},
        publisher={Cambridge University Press, Cambridge},
        date={1994},
        pages={xiv+316},
        isbn={0-521-42261-2},
        review={\MR{1294136}},
        doi={10.1017/CBO9780511600579},
        }

        \bib{daneses}{article}{
            author={Arkhipov, Sergey},
            author={Ørsted, Sebastian},
            title={Homotopy (co)limits via homotopy (co)ends in general combinatorial model categories},
            eprint={https://arxiv.org/abs/1807.03266},
        }

        \bib{balu}{article}{
            author={Bartels, Arthur},
            author={L\"uck, Wolfgang},
            title={Isomorphism conjecture for homotopy K-theory and groups acting on trees},
            journal={ J. Pure Appl. Algebra},
            volume={205},
            date={2006},
            number={3},
            pages={660-696},
            issn={},
            review={\MR{2210223}},
            doi={10.1016/j.jpaa.2005.07.020},
        }
        \bib{bacohi}{article}{
            author={Baum, Paul},
            author={Connes, Alain},
            author={Higson, Nigel},
            title={Classifying space for proper actions and K-theory of group $C^{\ast}$-algebras},
            journal={Contemp. Math.},
            volume={167},
            date={1994},
            number={},
            pages={240–291},
            issn={},
            review={\MR{1292018}},
            doi={10.1090/conm/167/1292018},
        }

        \bib{bousfried}{article}{
            author={Bousfield, A. K.},
            author={Friedlander, E. M.},
            title={Homotopy theory of $\Gamma $-spaces, spectra, and bisimplicial
                    sets},
            conference={
                    title={Geometric applications of homotopy theory (Proc. Conf.,
                            Evanston, Ill., 1977), II},
                },
            book={
                    series={Lecture Notes in Math.},
                    volume={658},
                    publisher={Springer, Berlin},
                },
            date={1978},
            pages={80--130},
            review={\MR{513569}},
        }

        \bib{bel}{article}{
            author={Ulrich Bunke},
            author={Alexander Engel},
            author={Markus Land},
            title={A stable $\infty$-category for equivariant KK-theory},
            journal={},
            volume={},
            date={},
            number={},
            pages={},
            issn={},
            review={},
            eprint={https://arxiv.org/abs/2102.13372}
        }

        \bib{friendly}{article}{
            author={Corti\~nas, Guillermo},
            title={Algebraic v. topological $K$-theory: a friendly match},
            conference={
                    title={Topics in algebraic and topological $K$-theory},
                },
            book={
                    series={Lecture Notes in Math.},
                    volume={2008},
                    publisher={Springer, Berlin},
                },
            date={2011},
            pages={103--165},
        }

        \bib{corel}{article}{
            author={Corti{\~n}as, Guillermo},
            author={Ellis, Eugenia},
            title={Isomorphism conjectures with proper coefficients},
            journal={J. Pure Appl. Algebra},
            volume={218},
            date={2014},
            number={7},
            pages={1224--1263},
            issn={0022-4049},
            review={\MR{3168494}},
            doi={10.1016/j.jpaa.2013.11.016},
        }

        \bib{cortho}{article}{
            author={Corti{\~n}as, Guillermo},
            author={Thom, Andreas},
            title={Bivariant algebraic $K$-theory},
            journal={J. Reine Angew. Math.},
            volume={610},
            date={2007},
            pages={71--123},
            issn={0075-4102},
            review={\MR{2359851 (2008i:19003)}},
            doi={10.1515/CRELLE.2007.068},
        }

        \bib{newlook}{article}{
            author={Cuntz, Joachim},
            title={A new look at $KK$-theory},
            journal={$K$-Theory},
            volume={1},
            date={1987},
            number={1},
            pages={31--51},
            issn={0920-3036},
            review={\MR{0899916}},
            doi={10.1007/BF00533986},
        }

        \bib{cuntz}{article}{
            author={Cuntz, Joachim},
            title={Bivariant K-theory on the Weyl algebra},
            journal={K-Theory},
            volume={35},
            date={2005},
            number={1-2},
            pages={93-137},
            issn={},
            review={MR2240217},
            doi={},
        }

        \bib{ralf}{book}{
            author={Cuntz, Joachim},
            author={Meyer, Ralf},
            author={Rosenberg, Jonathan M.},
            title={Topological and bivariant $K$-theory},
            series={Oberwolfach Seminars},
            volume={36},
            publisher={Birkh\"auser Verlag, Basel},
            date={2007},
            pages={xii+262},
            isbn={978-3-7643-8398-5},
            review={\MR{2340673 (2008j:19001)}},
        }

        \bib{dalu}{article}{
            author={Davis, James F. },
            author={L\"uck, Wolfgang},
            title={Spaces over a category and assembly maps in isomorphism conjectures in K- and L-theory.},
            journal={K-Theory},
            volume={15},
            date={1998},
            number={3},
            pages={201-252},
            issn={},
            review={MR1659969},
            doi={},
        }

        \bib{dugger}{article}{
            author={Dugger, Daniel},
            title={Combinatorial model categories have presentations},
            journal={Adv. Math.},
            volume={164},
            date={2001},
            number={1},
            pages={177--201},
            issn={0001-8708},
            review={\MR{1870516}},
            doi={10.1006/aima.2001.2015},
        }

        \bib{euge}{article}{
            author={Ellis, Eugenia},
            title={Equivariant algebraic $kk$-theory and adjointness theorems},
            journal={J. Algebra},
            volume={398},
            date={2014},
            pages={200--226},
            issn={0021-8693},
            review={\MR{3123759}},
            doi={10.1016/j.jalgebra.2013.09.023},
        }

        \bib{garku}{article}{
            author={Garkusha, Grigory},
            title={Algebraic Kasparov $K$-theory. I},
            journal={Doc. Math.},
            volume={19},
            date={2014},
            pages={1207--1269},
            issn={1431-0635},
            review={\MR{3291646}},
        }

        \bib{garkuni}{article}{
            author={Garkusha, Grigory},
            title={Universal bivariant algebraic $K$-theories},
            journal={J. Homotopy Relat. Struct.},
            volume={8},
            date={2013},
            number={1},
            pages={67--116},
            issn={2193-8407},
            review={\MR{3031594}},
            doi={10.1007/s40062-012-0013-4},
        }

        \bib{goja}{book}{
            author={Goerss, Paul G.},
            author={Jardine, John F.},
            title={Simplicial homotopy theory},
            series={Progress in Mathematics},
            volume={174},
            publisher={Birkh\"auser Verlag, Basel},
            date={1999},
            pages={xvi+510},
            isbn={3-7643-6064-X},
            review={\MR{1711612 (2001d:55012)}},
            doi={10.1007/978-3-0348-8707-6},
        }

        \bib{surbc}{article}{
            author={Gomez Aparicio, Maria Paula},
            author={Julg, Pierre},
            author={Valette, Alain},
            title={The Baum-Connes conjecture: an extended survey},
            journal={Advances in noncommutative geometry},
            volume={},
            date={2019},
            number={},
            pages={127-244},
            issn={},
            review={},
            doi={},
        }

        \bib{higson}{article}{
            author={Higson, Nigel},
            title={A characterization of KK-theory.},
            journal={Pacific J. Math.},
            volume={126},
            date={1987},
            number={2},
            pages={253-276},
            issn={},
            review={MR0869779},
            doi={},
        }

        \bib{hoveymodel}{book}{
            author={Hovey, Mark},
            title={Model categories},
            series={Mathematical Surveys and Monographs},
            volume={63},
            publisher={American Mathematical Society, Providence, RI},
            date={1999},
            pages={xii+209},
            isbn={0-8218-1359-5},
            review={\MR{1650134}},
        }

        \bib{hoveyspectra}{article}{
            author={Hovey, Mark},
            title={Spectra and symmetric spectra in general model categories},
            journal={J. Pure Appl. Algebra},
            volume={165},
            date={2001},
            number={1},
            pages={63--127},
            issn={0022-4049},
            review={\MR{1860878}},
            doi={10.1016/S0022-4049(00)00172-9},
        }

        \bib{jarsimp}{article}{
            author={Jardine, J. F.},
            title={Simplicial approximation},
            journal={Theory Appl. Categ.},
            volume={12},
            date={2004},
            pages={No. 2, 34--72},
            issn={1201-561X},
            review={\MR{2056093 (2005b:55042)}},
        }

        \bib{kaspG}{article}{
            author={Kasparov, G. G.},
            title={Equivariant $KK$-theory and the Novikov conjecture},
            journal={Invent. Math.},
            volume={91},
            date={1988},
            number={1},
            pages={147--201},
            issn={0020-9910},
            review={\MR{918241}},
            doi={10.1007/BF01404917},
        }

        \bib{kasp}{article}{
            author={Kasparov},
            title={The operator K-functor and extensions of $C^{*}$-algebras.},
            journal={ Izv. Akad. Nauk SSSR Ser. Mat.},
            volume={44},
            date={1980},
            number={3},
            pages={571-636},
            issn={},
            review={MR0582160},
            doi={},
        }

        \bib{kranz}{article}{
            author={Kranz, Julian},
            title={An identification of the Baum-Connes and Davis-Lück assembly maps},
            journal={Münster J. of Math.},
            volume={14},
            date={2021},
            number={},
            pages={509-536},
            issn={},
            review={},
            doi={10.17879/06089641898}
        }

        \bib{land}{article}{
            author={Land, Markus},
            title={The analytical assembly map and index theory},
            journal={J. Noncommut. Geom.},
            volume={9},
            date={2015},
            number={2},
            pages={603--619},
            issn={1661-6952},
            review={\MR{3359022}},
            doi={10.4171/JNCG/202},
        }

        \bib{bb}{article}{
            author={Wolfgang L\"uck},
            title={Isomorphism Conjectures in K- and L-Theory},
            eprint={http://www.him.uni-bonn.de/lueck/data/ic.pdf},
        }

        \bib{luriehtt}{book}{
            author={Lurie, Jacob},
            title={Higher topos theory},
            series={Annals of Mathematics Studies},
            volume={170},
            publisher={Princeton University Press, Princeton, NJ},
            date={2009},
            pages={xviii+925},
            isbn={978-0-691-14049-0},
            isbn={0-691-14049-9},
            review={\MR{2522659}},
            doi={10.1515/9781400830558},
        }

        \bib{tesisema}{thesis}{
        author={Rodr\'{\i}guez Cirone, Emanuel},
        title={Bivariant algebraic $K$-theory categories and a spectrum for $G$-equivariant bivariant algebraic $K$-theory},
        type={Ph.D. Thesis},
        date={2017},
        organization={Universidad de Buenos Aires},
        eprint={http://cms.dm.uba.ar/academico/carreras/doctorado/tesisRodriguez.pdf},
        }

        \bib{htpysimp}{article}{
        author={Rodr\'{\i}guez Cirone, Emanuel},
        title={The homotopy groups of the simplicial mapping space between
                algebras},
        journal={Doc. Math.},
        volume={24},
        date={2019},
        pages={251--270},
        issn={1431-0635},
        review={\MR{3946708}},
        }

        \bib{loopsthtpy}{article}{
        author={Rodr\'{\i}guez Cirone, Emanuel},
        title={The loop-stable homotopy category of algebras},
        journal={J. Algebra},
        volume={555},
        date={2020},
        pages={157--222},
        issn={0021-8693},
        review={\MR{4081492}},
        doi={10.1016/j.jalgebra.2020.02.024},
        }

        \bib{rosicky}{article}{
        author={Rosick\'{y}, Ji\v{r}\'{\i}},
        title={Generalized Brown representability in homotopy categories},
        journal={Theory Appl. Categ.},
        volume={14},
        date={2005},
        pages={no. 19, 451--479},
        review={\MR{2211427}},
        }

        \bib{chuck}{article}{
            author={Weibel, Charles A.},
            title={Homotopy algebraic $K$-theory},
            conference={
                    title={Algebraic $K$-theory and algebraic number theory},
                    address={Honolulu, HI},
                    date={1987},
                },
            book={
                    series={Contemp. Math.},
                    volume={83},
                    publisher={Amer. Math. Soc., Providence, RI},
                },
            isbn={0-8218-5090-3},
            date={1989},
            pages={461--488},
            review={\MR{0991991}},
            doi={10.1090/conm/083/991991},
        }

    \end{biblist}
\end{bibdiv}
\end{document}